\def\({\left(}
\def\]{\right]}
\def\[{\left[}
\def\){\right)}
\newtheorem{thm}{Theorem}[section]
\newtheorem{asm}[thm]{Assumption}
\newtheorem{lem}[thm]{Lemma}
\newtheorem{rem}[thm]{Remark}
\newtheorem{conj}[thm]{Conjecture}
\newtheorem{defn}[thm]{Definition}
\def\ga{\gamma}
\newcommand{\bx}{\mathbf{x}}
\def\P{{\mathbb P}}
\def\E{{\mathbb E}}
\def\Cov{{\mathbb Cov}}
\def\Var{{\mathbb Var}}
\def\R{{\mathbb R}}
\newcommand{\disp}{\displaystyle}
\newcommand{\bea}{$$\begin{array}{ll}}
\newcommand{\eea}{\end{array}$$}
\newcommand{\bed}{\begin{displaymath}}
\newcommand{\eed}{\end{displaymath}}
\newcommand{\ad}{&\!\!\!\disp}
\newcommand{\aad}{&\disp}
\newcommand{\barray}{\begin{array}{ll}}
\newcommand{\earray}{\end{array}}
\newcommand{\beq}[1]{\begin{equation} \label{#1}}
\newcommand{\eeq}{\end{equation}}
\newcommand{\bedd}{\bed\begin{array}{l}}
\newcommand{\eedd}{\end{array}\eed}
\newcommand{\al}{\alpha}
\newcommand{\sg}{\sigma}
\newcommand{\e}{\varepsilon}
\newcommand{\Ga}{\Gamma}
\newcommand{\M}{{\mathcal{M}}}
\newcommand{\one}{{1}}
\newcommand{\wdt}{\widetilde}
\newcommand{\wdh}{\widehat}
\newcommand{\cd}{(\cdot)}
\newcommand{\rr}{\mathbb R}
\newcommand{\lbar}{\overline}
\newcommand{\eps}{\varepsilon}
\def\one{{\hbox{1{\kern -0.35em}1}}}
\def\({\left(}
\def\]{\right]}
\def\[{\left[}
\def\){\right)}
\def\one{{\hbox{1{\kern -0.35em}1}}}
\makeatletter \@addtoreset{equation}{section}
\def\argmax{\hbox{argmax}}
\begin{document}

\title[Optimal harvesting and stocking]{The effects of random and seasonal environmental fluctuations on optimal harvesting and stocking}

\author[A. Hening]{Alexandru Hening }
\address{Department of Mathematics\\
Texas A\&M University\\
Mailstop 3368\\
College Station, TX 77843-3368\\
United States
}
\email{ahening@tamu.edu}

\author[K. Q. Tran]{Ky Quan Tran}

\address{Department of Mathematics and Statistics\\
The
State University of New York in Korea\\
119 SongdoMoonhwa-Ro, Yeonsu-Gu\\
Yeonsu-Gu, Incheon 21985\\
Korea
}
\email{ky.tran@stonybrook.edu}

\author[S. Ungureanu]{Sergiu C. Ungureanu}
\address{Department of Economics\\
City, University of London\\
Northampton Square\\
London EC1V 0HB\\
United Kingdom}
\email{Sergiu.Ungureanu.1@city.ac.uk}

\date{}

\keywords {Harvesting; stochastic environment; density-dependent price; controlled diffusion; switching environment; seasonality}
\subjclass[2010]{92D25, 60J70, 60J60}

\begin{abstract}
We analyze the harvesting and stocking of a population that is affected by random and seasonal environmental fluctuations. The main novelty comes from having three layers of environmental fluctuations. The first layer is due to the environment switching at random times between different environmental states. This is similar to having sudden environmental changes or catastrophes. The second layer is due to seasonal variation, where there is a significant change in the dynamics between seasons. Finally, the third layer is due to the constant presence of environmental stochasticity---between the seasonal or random regime switches, the species is affected by fluctuations which can be modelled by white noise. This framework is more realistic because it can capture both significant random and deterministic environmental shifts as well as small and frequent fluctuations in abiotic factors. Our framework also allows for the price or cost of harvesting to change deterministically and stochastically, something that is more realistic from an economic point of view.

The combined effects of seasonal and random fluctuations make it impossible to find the optimal harvesting-stocking strategy analytically. We get around this roadblock by developing rigorous numerical approximations and proving that they converge to the optimal harvesting-stocking strategy. We apply our methods to multiple population models and explore how prices, or costs, and  environmental fluctuations influence the optimal harvesting-stocking strategy. We show that in many situations the optimal way of harvesting and stocking is not of threshold type.

\end{abstract}

\maketitle

\setlength{\baselineskip}{0.22in}

\tableofcontents

\section{Introduction}\label{sec:int}

A fundamental problem in conservation ecology is finding the optimal strategy for harvesting a species. This problem is important because excessive harvesting can drive species extinct while under-harvesting ensures the loss of valuable resources. If one assumes that the dynamics and harvesting happen in continuous time, there has been significant progress in finding the optimal harvesting strategies which maximize the total discounted or asymptotic harvest yield---see the work by \cite{AP81, LO97, AS98, HNUW18, AH19, CHS21}. These studies have shown that in a very general setting the optimal strategy is of \textit{threshold} or \textit{bang-bang} type: there exists a threshold $w>0$ such that whenever the population is under the threshold there is no harvesting while when the population is above the threshold one harvests at the maximal (possibly infinite) rate. In this paper we look at whether this result is true in more general and realistic models.

The mathematical framework we will use is the one of stochastic differential equations with switching (SSDE). A SSDE has a discrete component that keeps track of the environment, and which changes at random times. In a fixed environmental state the system is modelled by a stochastic differential equation. This way one can capture the more realistic behaviour of two types of environmental fluctuations:
\begin{itemize}
  \item major environmental shifts (daily or seasonal changes, catastrophes),
  \item abiotic fluctuations within each environment.
\end{itemize}
We focus on the most natural setting, when the switching rates are constant. In this case, the process spends an exponentially distributed time in each environmental state and then switches to a different state. Environmental switches have been shown to fundamentally alter the fate of ecological communities by reversing competitive exclusion into coexistence or having other unexpected results \citep{HS19, HN20, BL16, B21, HNNW21}. We explore how switching impacts the harvesting and stocking of a species.

Even though the general properties for SSDE have been studied thoroughly \citep{YZ09, ZY09, NYZ17}, there are few results regarding the persistence or harvesting of ecological systems modelled by SSDE \citep{song2011optimal, Ky15, Ky17, BS16, Song16, HL20}. We fill this gap by providing an analysis complemented by an in depth look at some specific illuminating examples. In particular, we look at the logistic equation which has been used extensively in fisheries and other harvesting settings \citep{Cla10,AS98}.

The SSDE framework is generalized even further by including deterministic seasonal variation, which will make the various coefficients depend explicitly on time in a periodic fashion. There are few studies which look at the interaction of harvesting and seasonal variability. Some focus on very specific models or look only at the purely deterministic setting \citep{C88, FW98, BS03, XBD05, BM08, BSS16}. The current paper provides important generalizations to these previous results as we can analyze a very wide range of models.

In addition to modelling the ecological dynamics, one also has to have a robust way of modelling the economics.  The price of the harvested species can depend on the population size, the state of the environment and also explicitly on time. Our framework also includes a realistic cost that is incurred through stocking or harvesting. This cost is due to fees associated with harvesting-stocking policies, state constraints, certain taxes, or incentives that the manager must follow or can receive.

The price and cost functions depend on time both directly, and indirectly through the population state and the environment switching state. Economically, the indirect time dependence is important since it captures the growth-delayed-return trade-off, which is a typical feature of harvesting models. The direct time dependence adds another important layer of realism, since it can accommodate exogenous price changes. These can come from varying commodity prices (external supply and demand shocks), and varying costs of inputs of production.
\newline
The harvested quantity can indirectly change the price in the market by affecting the total supply. Because our price and cost functions depend directly on the harvested quantities, we can accommodate for this endogenous effect on the total supply. The numerical methods can simulate the price changes with a suitable choice of price and cost functions. Because the market side is not modeled explicitly, this can only be done ad hoc, for each particular application. A discussion of the importance of market variables in fisheries management can be found in \cite{sylvia1994market, pooley1987demand, asche2015economic}.

The main novelties of our work are the following:
\begin{enumerate}
\item We formulate the harvesting-stocking problem for a species that is influenced by three types of environmental fluctuations: the first due to major regime shifts, the second due to constant abiotic changes, and the third due to seasonality.
\item The price for harvesting or stocking is realistic and depends on the population size as well as the state of the environment. Furthermore, stocking and harvesting incur a cost as well.
\item We develop rigorous numerical approximation schemes based on the Markov chain approximation method.
\item We discover interesting new phenomena by analyzing in depth some important examples.
\end{enumerate}

The rest of the paper is organized as follows. In Section \ref{sec:for} we describe our model and the main results. In Section \ref{sec:ext} we discuss several extensions of the proposed model.  Particular examples are explored using the newly developed numerical schemes in Section \ref{sec:fur}. The discussion of our results is in Section \ref{sec:disc}.  Finally, all the technical proofs appear in the appendices.

\section{Model and Results}\label{sec:for}
Assume we have a probability space $(\Omega, \mathcal{F}, \P)$ satisfying the usual conditions.
Denote by $X(t)$ the size of the population at time $t\geq 0$.
For a natural population, without harvesting or stocking, the dynamics is given by
\beq{e.1} d X(t)=b\big(X(t), \al(t)\big)
dt+\sigma \big(X(t), \al(t) \big) d w(t),\eeq
where $w\cd$ is a standard Brownian motion, $\al(t)$ is an irreducible continuous-time Markov chain taking values in $\mathcal{M}=\{1, \dots, m_0\}$, and $b,\sigma:\rr_+\times \M\to \R$ are smooth enough functions. Furthermore,
we assume that the Brownian motion $w\cd$ and the Markov chain $\alpha\cd$ are independent and denote by $q_{ij}$ the transition rates of $\al(t)$. This means that
\begin{equation}
	\label{e:tran}
	\begin{array}{ll}
		&\disp \P\{\al(t+\Delta)=j~|~\al(t)=i, X(s),\al(s), s\leq t\}=q_{ij}\Delta+o(\Delta), \text{ if } i\ne j.\\
		&\disp \P\{\al(t+\Delta)=i~|~\al(t)=i, X(s),\al(s), s\leq t\}=1+q_{ii}\Delta+o(\Delta),
	\end{array}
\end{equation}
where $q_{ii}:=-\sum_{j\ne i}q_{ij}$.
As usual, we assume that $b(0, \al)=\sg(0, \al)=0$ since if the population goes extinct, it should not be able to get resurrected without external intervention (like a repopulation/stocking event). If $X(t_0)=0$ for some $t_0\ge 0$, then $X(t)=0$ for any $t\ge t_0$. Thus, $X(t)\in \R_+$ for any $t\ge 0$.

\textit{\textbf{Biological interpretation}: If at time $t$ the state of the environment is $\alpha(t)=k$ the dynamics is governed by the SDE
\begin{equation*}
d X(t)=b\big(X(t), k\big)
dt+\sigma \big(X(t), k \big) d w(t)
\end{equation*}
with nonlinear drift term $\mu(x,k)$ and diffusion term $\sigma(x,k)$. However, the environment can switch to a different state $j$ with the transition rate $q_{kj}$. In the small time $\Delta$ the probability of transition to state $j$ is approximately $q_{kj}\Delta$. }

Note that since the transition matrix $Q=(q_{ij})_{m_0\times m_0}$ is independent of the population of the species, the jump times of $\alpha(t)$ will be exponentially distributed and independent of the process $X$.

Let $U(t)$ be the harvesting-stocking rate of the population. This means that in the small time-interval $(t,t+\Delta t)$ one harvests $U(t)\Delta t$ if $U(t)>0$ and one stocks $U(t)\Delta t$ if $U(t)<0$.

The dynamics of the population $X(t)$ that includes harvesting and stocking becomes
\beq{e.2}X(t)=x+\int\limits_0^t \big( b(X(s), \al(s)) - U(s)\big) ds +  \int\limits_0^t  \sg \big(X(s), \al(s) \big) dw(s).\eeq
In order to have a well defined system we also need to impose some initial conditions:
\beq{e.3}X(0)=x\in \R_+, \quad  \al(0)=\al\in \M.\eeq
Note that $\max\{U(t),0\}$ and $\max\{-U(t), 0\}$ are the magnitudes of the harvesting and the stocking rate, respectively.
We suppose that $U(t)$ takes values in a compact subset $\mathcal{U}\subset \R$ and $0\in \mathcal{U}$. This means that the maximal harvesting and stocking rates are bounded and that one can always choose to not interfere with the population, that is, pick $U(t)=0$ at certain times $t\geq 0$.

Let $\mathcal{A}_{x, \al
}$ denote the collection of all admissible controls with initial value $(x, \al)\in  \R_+\times \M$. The collection  $\mathcal{A}_{x, \al
}$ of admissible controls is defined to contain the elements $U\cd$ such that
\begin{itemize}
	\item $U(t)$ is $\mathcal{F}(t)$-adapted,
	\item $U(t)\in \mathcal{U}$ for any $t\ge 0$,
	\item $X(t)\ge 0$ for any $t\ge 0$.
\end{itemize}

Let $P: \rr_+ \times \M\to [0, \infty)$ be the price of the species per unit. The price depends on the population size and the environmental state.
The accumulated income can be written as
$$\int_0^\infty e^{-\delta s} P\big(X(s), \al(s) \big)\cdot U(s)ds,$$
where $\delta>0$ is the time discount rate. In addition, we also suppose that the harvesting-stocking is costly, something that is described by the cost function
$C: \R_+\times \M \times \mathcal{U}\to [0, \infty)$. Thus, for a control strategy $U\cd\in \mathcal{A}_{x, \al}$, we define the \textit{performance function} as
\beq{e.4}
J\big(x, \al, U\cd \big):=  \E_{x, \al}  \int_0^{\infty} e^{-\delta s} \Big[ P(X(s), \al(s)) \cdot U(s) - C\big(X(s), \al(s), U(s)\big) \Big]ds,
\eeq
where $\E_{x, \al}$ denotes the expectation with respect to the probability law when the process $(X(t), \al(t))$ starts with initial condition $(x, \al)$. The goal is to maximize $J(x, \al, U\cd)$ and find an optimal  strategy $U^*\cd$ such that
\beq{e.5}
J\big(x,\al, U^*\cd \big)=V(x, \al):= \sup\limits_{U
\cd\in \mathcal{A}_{x, \al}} J\big(x, \al, U\cd \big).
\eeq
For notational simplicity, we collect the price function $P\cd$ and the cost function $C\cd$ into the price-cost function $p: \R_+\times \M\times \mathcal{U}\to \R$ given by
$$p(x, \al, u) = P(x, \al)\cdot u - C\big(x, \al, u\big).$$
As a result, the performance function can be written as
\beq{e.6}
J(x, \al, U\cd)=  \E_{x, \al}  \int_0^{\infty} e^{-\delta s} p\big(X(s), \al(s), U(s)\big) ds.
\eeq

The following standing assumptions are made throughout the paper.

\begin{asm}\label{a:1}
\begin{itemize}
\item[{\rm (a)}]  The functions $b(\cdot, \al)$ and $\sigma(\cdot, \al)$ are locally Lipschitz continuous for each $\al\in \M$. Moreover, for any initial condition $(x, \al)\in  \R_+\times \M$, the uncontrolled system \eqref{e.1}  has a unique global solution.
\item[{\rm (b)}]
The function $p(\cdot, \al, \cdot)$ is bounded and continuous for each $\al\in \M$.
\end{itemize}
\end{asm}
\begin{rem}
We recall that in recent papers by \cite{Ky18, Ky19} the authors assume that the stocking price is higher than the harvesting price. In the current paper, however, the stocking and harvesting have the same price which is a function of the current population size and the environmental regime. We take into account the control cost by introducing the price-cost function $p\cd$. As a result our setting is more general than those considered by \cite{Ky18, Ky19}. The cost function $C\cd$ allows us to take into account taxes, state constraints, and incentives for sustainability that arise in practical settings. We will discuss several extensions and  related formulations in Section \ref{sec:ext}. In the first extension, we look at the setting where harvesting in a short time $dt$ looks like $U(t)X(t)dt$, and $U(t)$ lives in a compact set including $0$---note that the harvesting rate in time can be unbounded in this setting. In the second extension, we look at the setting where the price is time-dependent, uncertain and its evolution is described by a stochastic differential equation. In the third extension, we consider the combined effects of random and seasonal (periodic) environmental fluctuations.
\end{rem}
A common approach in treating stochastic control problems is to characterize the value function as a viscosity solution of a Hamilton-Jacobi-Bellman equation. Formally, the associated equation of the underlying problem is given by
\beq{hjb}
\barray
\max\limits_{u\in \mathcal{U}}\left[ V'(x, \al)\big(b(x, \al) - u\big) + \dfrac{1}{2}\sg^2(x, \al)V''(x, \al) \right. \\
	 \left. + \sum\limits_{k=1}^{m_0}q_{\al k} V(x, k) + p(x, \al, u) -\delta V(x, \al)\right]=0,
\earray
\eeq
for $(x, \al)\in \rr_+\times \M$.
One can try to adopt this approach by following the techniques used by \cite{Z11}.
However, as pointed out by \cite{Z11}, under regime switching the associated Hamilton-Jacobi-Bellman equation is complicated and a closed-form solution is virtually impossible to obtain. Therefore, in order to treat the underlying formulation and extensions in Section \ref{sec:ext}, we will approximate the original problem by a sequence of discrete-time optimal control problems of approximating Markov chains.
\begin{rem} Although our work is motivated by the presence of a Markovian switching environment, the numerical schemes we developed can also be used to solve harvesting problems for diffusion models without switching. In particular, one can simply take $\M=\{1\}$ and put $q_{11}=0$.
\end{rem}

\subsection{Numerical Scheme}
\label{sec:num}
\

\noindent
We provide a numerical approach to gain information about the value function and the optimal harvesting-stocking strategy.
Using the Markov chain approximation method developed by \cite{Kushner90} and \cite{Kushner92}, we construct a controlled  Markov chain in discrete time that approximates the controlled stochastic processes. As pointed out by \cite{Kushner92} a probabilistic approach using the Markov chain approximation method for controlled diffusions has some important advantages. First, the Markov chain approximation method allows one to use physical insights derived from the dynamics of the controlled diffusion in obtaining a suitable approximation scheme. Second, the Markov chain approximation method does not require significant regularity of the controlled processes \eqref{e.2} nor does it rely on the uniqueness properties of the associated HJB equation \eqref{hjb}.

Let $h>0$ be a discretization parameter. Define $S_{h}: = \{k h: k\in \mathbb{Z}_{\ge 0}\}$ and let $\{(X^h_n, \al^h_n): n \in \mathbb{Z}_{\ge 0}\}$
be a discrete-time controlled Markov chain with state space $S_{h}\times \M$.
At each discrete-time  step $n$,  the magnitude of the harvesting-stocking component $U^h_n$ must be specified.
Let $U^h = \{U^h_n\}$  be a sequence of controls.
We denote by $q^h\((x, k), (y, l)|u\)$ the transition probability from state $(x, k)$ to another state $(y, l)$ under the control $u$.
Denote
$\mathcal{F}^h_n=\sigma\{X^h_m, \al^h_m, U^h_m, m\le n\}$.

The sequence $U^h$
is said to be admissible if it satisfies the following conditions:
\begin{itemize}
	\item[{\rm (a)}]
	$U^h_n$ is
	$\sigma\{X^h_0, \dots, X^h_{n}, \al^h_0, \dots, \al^h_n,U^h_0, \dots, U^h_{n-1}\}-\text{adapted}$ and $U^h_n\in \mathcal{U}$ for each $n$;
	\item[{\rm (b)}]  For any $(x, k)\in S_h\times \M$, we have
	\begin{equation*}
		\begin{split}
		\P \left\{ \(X^h_{n+1}, \al^h_{n+1}\) = (x, k) | \mathcal{F}^h_n\right\} &= \P \left\{ \(X^h_{n+1}, \al^h_{n+1}\) = (x, k) | X^h_n, \al^h_n, U^h_n\right\}  \\
		&= q^h \( (X^h_n, \al^h_n), (x, k)| U^h_n \);
		\end{split}
	\end{equation*}
	\item[{\rm (c)}] $X^h_n\in S_h$ for all $n\in  \mathbb{Z}_{\ge 0}$.
\end{itemize}
The class of all admissible control sequences $U^h$ for initial state $(x, \al)$ will be denoted by
$\mathcal{A}^h_{x, \al}$.

For each
 $(x, \al, u)\in S_h\times \M\times \mathcal{U}$,
we define
a family of interpolation intervals $\Delta t^h (x, \alpha, u)$. The values of $\Delta t^h (x, \al, u)$ will be specified later. Then we define
$$ t^h_0 = 0,\quad  \Delta t^h_m = \Delta t^h(X^h_m, \al^h_m, U^h_m),
\quad  t^h_n = \sum\limits_{m=0}^{n-1} \Delta t^h_m.$$
For $(x, \al)\in S_h\times \M$ and $U^h\in \mathcal{A}^h_{x, \al}$, the performance function and the value function for the controlled Markov chain is defined as
\beq{e.7}
J^h(x, \al, U^h) =  \E\sum_{m=0}^{\infty} e^{-\delta t_m^h} p(X^h_m, \al^h_m, U^h_m)  \Delta t_{m}^{h}, \quad
V^h(x, \al) = \sup\limits_{U^h\in \mathcal{A}^h_{x, \al}} J^h (x, \al, U^h).
\eeq

The corresponding dynamic programming equation for the discrete approximation is given by
\begin{equation*}
\begin{split}
V^h(x, \al) = \max_{u\in \mathcal{U}} \bigg[e^{-\delta \Delta t^h(x, \al, u)} \sum\limits_{(y, \beta)\in S_h\times \M} V^h(y, \beta)q^h\big( (x, \al), (y, \beta)| u\big)
 + p (x, \al, u) \Delta t^h(x, \al, u)\bigg].
\end{split}
\end{equation*}

We will construct the transition probabilities and interpolation intervals such that the Markov chain $\left\{(X^h_n, \al^h_n)\right\}$ approximates the process $\left\{(X\cd, \al\cd\right\}$ well, in the sense that they are locally consistent. Then the similarity between \eqref{e.6} and \eqref{e.7} suggests that the values $V^h(x, \al)$ and $V(x, \al)$ will be close for small $h$, and this will turn out to be the case. Solving the optimal harvesting problem for the chain $\left\{(X^h_n, \al^h_n)\right\}$, we obtain an approximating optimal value and an approximating optimal strategy for the continuous-time process $\big( X\cd, \al\cd\big)$. The main convergence result is given below.

\begin{thm}
\label{thm.1}
	Suppose Assumptions \ref{a:1} holds. Then for any $(x, \al) \in \R_+\times \M$,
	 $V^h(x, \al)\to V(x, \al)$ as $h\to 0$.
	Thus, for sufficiently small $h$, a near-optimal harvesting-stocking strategy of the controlled Markov chain $\{(X^h_n, \al^h_n)\}$ is also a near-optimal harvesting-stocking policy of $\big(X\cd, \al\cd\big)$.
\end{thm}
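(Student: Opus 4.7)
The plan is to follow the Markov chain approximation method of \cite{Kushner92}, adapted to the regime-switching setting. First I would construct the continuous-time interpolation of $\{(X^h_n, \al^h_n, U^h_n)\}$ using the intervals $\Delta t^h_n$, putting $X^h(t) = X^h_n$, $\al^h(t) = \al^h_n$, and $U^h(t) = U^h_n$ on $[t^h_n, t^h_{n+1})$. By the local consistency of the transition probabilities $q^h$ and of the interpolation intervals $\Delta t^h$ with the coefficients $b$, $\sigma$ and the $Q$-matrix $(q_{ij})$, one obtains a semimartingale decomposition
\[ X^h(t) = x + \int_0^t \big( b(X^h(s), \al^h(s)) - U^h(s) \big)\, ds + M^h(t) + R^h(t), \]
where $M^h$ is a martingale whose quadratic variation approximates $\int_0^t \sigma^2(X^h(s), \al^h(s))\, ds$ and $R^h$ is a remainder whose sup norm on compact time intervals is $o(1)$ as $h\to 0$. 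A parallel decomposition holds for $\al^h$, whose predictable compensator matches the generator of $\al$.

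Next I would establish tightness of $(X^h, \al^h, U^h, M^h)$ on the appropriate Skorokhod space, viewing $U^h$ as a relaxed control (a random measure on $\mathcal{U}\times [0, \infty)$). Tightness of the continuous component uses Assumption~\ref{a:1} together with standard martingale moment bounds; tightness of $\al^h$ follows from the bounded switching rates $q_{ij}$, and tightness of $U^h$ is automatic since $\mathcal{U}$ is compact. By Prokhorov together with the Skorokhod representation theorem, extract a subsequence converging almost surely to some limit $(X, \al, U, M)$ on a common probability space. Using continuity of $b,\sigma$ and dominated convergence in the controlled integrals, the limit satisfies \eqref{e.2} with an admissible (relaxed) control $U$; a standard chattering argument recovers an ordinary admissible control with the same performance. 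The two inequalities for the value functions are then obtained symmetrically: taking near-optimal discrete controls and passing to the weak limit, together with bounded convergence of $p$ weighted by $e^{-\delta s}$, yields $\limsup_{h\to 0} V^h(x, \al) \le V(x, \al)$; while piecewise-constant discretization (sampled at $t^h_n$) of any near-optimal continuous-time control, combined with the same local consistency, yields $\liminf_{h\to 0} V^h(x, \al) \ge V(x, \al)$.

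The main obstacle is that $b,\sigma$ are only locally Lipschitz, the state space $\R_+\times \M$ is unbounded, and the horizon is infinite, so uniform tightness and convergence of the infinite-horizon cost require care. The remedy I would use is a truncation-plus-tail estimate: localize the argument to $[0, N]$ by stopping the chain at the exit time from this interval, prove convergence there, and then control the tail uniformly in $h$ using the discount factor $e^{-\delta s}$ and the boundedness of $p$ from Assumption~\ref{a:1}(b) to send $N\to \infty$. Preserving the admissibility constraint $X^h_n \ge 0$ in the limit also demands care near the boundary $x=0$; this is ensured by the assumption $b(0,\al)=\sigma(0,\al)=0$, which prevents escape across zero for both the chain and its weak limit, so that the limiting control $U$ is admissible in the sense of \eqref{e.2}--\eqref{e.3}.
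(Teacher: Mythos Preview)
Your proposal follows essentially the same route as the paper: continuous-time interpolation of the chain, local-consistency-based semimartingale decomposition, relaxed controls, tightness on Skorokhod space, identification of the weak limit with the controlled SDE \eqref{e.2}, and the two inequalities for $V^h$ versus $V$. The paper carries this out in its appendices (the interpolation in Section~B and the convergence in Theorems~\ref{thm:thm} and~\ref{thm:4.6}), constructing an explicit interpolated noise $w^h$ from the martingale increments rather than working with $M^h$ directly, but the logic is the same.

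The one place where your sketch is too quick is the lower bound $\liminf_h V^h(x,\al)\ge V(x,\al)$. Sampling a near-optimal continuous-time control at the random times $t^h_n$ does not produce an admissible control for the chain, because $U(\cdot)$ is adapted to the filtration of $(w,\al)$, not to that of $(X^h_n,\al^h_n)$. The paper handles this via the chattering lemma: first replace the near-optimal relaxed control by an ordinary control that is piecewise constant on a deterministic grid $[k\lambda,(k+1)\lambda)$ and takes only finitely many values; then approximate this control by a measurable functional of the driving noise $(w,\al)$; finally transport this functional to the chain using $w^h$ and $\al^h$. This extra layer is what makes the resulting discrete control admissible for $\{(X^h_n,\al^h_n)\}$ and is the standard device in \cite{Kushner92}; you should make it explicit rather than appealing only to ``piecewise-constant discretization.'' Your truncation-plus-tail argument for the unbounded state space is a reasonable precaution, though the paper does not carry it out separately: boundedness of $p$ together with the discount $e^{-\delta s}$ is invoked directly through dominated convergence.
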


\begin{rem}
  From a control theoretic point of view, an admissible control $U\cd\in \mathcal{A}_{x, \al}$ is called \textbf{$\e$-optimal with respect to $(x, \al)$}
if $J(x, \al, U\cd)\ge V(x, \al)-\e$. A family of admissible controls $U_\e\cd\in \mathcal{A}_{x, \al}$ parameterized
by $\e>0$ is called \textbf{near-optimal} with respect to $(x, \al)$ if
$J(x, \al, U_\e\cd)\ge V(x, \al)-\psi(\e)$
 for sufficiently small $\e$, where $\psi\cd$ is a function of $\e$ satisfying $\psi(\e) \to 0$ as $\e \to 0$. The $\e$-optimal controls and near-optimal controls for the chain $\{(X^h_n, \al^h_n)\}$ can be defined similarly.
By virtue of Theorem \ref{thm.1} (see also Theorem \ref{thm:4.6}), as $h$ approaches $0$,
a near-optimal harvesting-stocking strategy of the controlled Markov chain $\{(X^h_n, \al^h_n)\}$ will provide a near-optimal harvesting-strategy for$\big(X\cd, \al\cd\big)$. As a result, the original problem can be reduced to solving for a near-optimal harvesting-stocking policy of the Markov chain $\{(X^h_n, \al^h_n)\}$.
\end{rem}

\section{Extensions}
\label{sec:ext}

\subsection{Variable effort harvesting-stocking strategies}
\label{sec:var_eff}
\

\noindent
In this subsection, we analyze an extension of our framework for which the harvesting-stocking rates are not forced to be bounded as in \eqref{e.2}---see \citet[Section 3]{Alvarez98} and work by \citet{Kha2019}. This allows us to take into account not only the impact of the harvesting-stocking effort, but also the abundance of the species itself. It is natural to assume that the harvesting or stocking rate is proportional to the population size \citep{HNUW18}. These types of harvesting strategies, where a constant fraction of the population is harvested, are called \textit{constant effort harvesting strategies} and are widely used in modeling fisheries.
We suppose that $\mathcal{U}$ is a compact subset of $\mathbb{R}$ and that $0\in \mathcal{U}$. Our harvesting-stocking strategy will be such that in the small time $dt$ we harvest or stock the amount $U(t)X(t)\,dt$ where $U(t)\in \mathcal{U}$. We call these strategies \textit{variable effort harvesting-stocking strategies}. The dynamics that includes stocking and harvesting is given by

 \beq{f.1}X(t)=x+\int\limits_0^t \big( b(X(s), \al(s)) - U(s)X(s)\big) ds +  \int\limits_0^t  \sg \big(X(s), \al(s) \big) dw(s), \quad \al(0)=\al\in \M.\eeq

 Let $\mathcal{A}_{x, \al
}$ denote the collection of all admissible controls with initial value $(x, \al)\in  \R_+\times \M$, where a strategy $U\cd$ will be in $\mathcal{A}_{x, \al
}$ if  $U(t)$ is
$\mathcal{F}(t)$-adapted, $U(t)\in \mathcal{U}$ for any $t\ge 0$, and $X(t)\ge 0$ for any $t\ge 0$.
The corresponding performance function and value function are given by
\begin{align*}
 p(x, \al, u) &= P(x, \al)\cdot x \cdot u - C\big(x, \al, u\big), \\
 \lbar {J}(x, \al, U\cd):&=  \E_{x, \al}  \int_0^{\infty} e^{-\delta s} p\big(X(s), \al(s), U(s)\big) ds,\\
 \lbar V(x, \al) &= \sup\limits_{U\cd \in \mathcal{A}_{x, \al
}} \lbar J\big(x, \al, U\cd \big).
\end{align*}

Compared with the stocking-harvesting model \eqref{e.2}, in this setting, one can harvest with higher rates when $X(t)$ is large, while we can only stock at lower rates when $X(t)$ is small. It is also clear that the stocking rate when $X(t)=0$ is $U(t)X(t)=0$ so that $V(0, \al)=0$ for any $\al\in \M$. This implies that one cannot restock a species once it is extinct.

\subsubsection{Constructing the approximation scheme.} We use the same method as in the preceding section to construct a discrete-time controlled Markov chain $\{(X^h_n, \al^h_n): n \in \mathbb{Z}_{\ge 0}\}$ with state space $S_{h}\times \M=\{k h: k\in \mathbb{Z}_{\ge 0}\}\times \M$. At each step $n$, we need to specify a control $U^{h}_n \in \mathcal{U}$.
 We need to define transition probabilities $q^h ((x, k), (y, l) | u)$ and interpolation intervals $\Delta t^h (x, \alpha, u)$
 so that the controlled Markov chain $\{(X^h_n, \al^h_n)\}$ is locally consistent with respect to the controlled diffusion \eqref{f.1}. For
 $(x, \al, u)\in S_h\times \M\times \mathcal{U}$ and
the family of the interpolation intervals $\Delta t^h (x, \alpha, u)$, we define
$$
t^h_0 = 0,\quad  \Delta t^h_m = \Delta t^h(X^h_m, \al^h_m, U^h_m),
\quad  t^h_n = \sum\limits_{m=0}^{n-1} \Delta t^h_m.$$

For $(x, \al)\in S_h\times \M$,
the class $\mathcal{A}^h_{x, \al}$ of all admissible control sequences $U^h$ for initial state $(x, \al)$ can be defined as before.
Then the performance function for the controlled Markov chain is defined as
$$
\lbar {J}^h(x, \al, U^{h}) =  \E\sum_{m=0}^{\infty} e^{-\delta t_m^h} p(X^h_m, \al^h_m, U^{h}_m)  \Delta t_{m}^{h}.
$$
The value function of the controlled Markov chain is
$$
\lbar V^h(x, \al) = \sup\limits_{U^{h} \in \mathcal{A}^h_{x, \al}} \lbar J^h (x, \al, U^{h}).
$$
The convergence result given in Theorem \ref{thm.1} is also valid in this case, if $\lbar V^h(x, \al)$ and $\lbar V(x, \al)$ are substituted for $V^h(x, \al)$ and $V(x, \al)$, respectively.

\subsection{Uncertain price functions}
\

\noindent
Since the harvesting-stocking problem we investigate is on an infinite time horizon, it is natural to look at prices that can change in time. In this section we consider a more realistic scenario with time dependent price functions \citep{Alvarez07, Kha2019}. Working with the population model given by  \eqref{e.2}, we suppose that the per-unit price-cost function is $p_0(x, \al, u) + \Phi(t)$ for $(t, x, \al, u)\in \R^2_+\times \M\times \mathcal{U},$ where $p_0\cd$ is a deterministic function and
$\Phi\cd$
satisfies the following stochastic differential equation
\beq{g.1}d\Phi(t) = b_0\big(\Phi(t), \al(t)\big)dt + \sg_0\big( \Phi(t), \al(t) \big)dw_0(t), \quad \Phi(0)=\phi\in \rr_+.\eeq
We assume that $w_0\cd$ is a standard Brownian motion in $\mathbb{R}$ which is independent of the Brownian motion $w\cd$ that drives the dynamics of the species.  We also suppose that $b_0(\cdot, \al)$ and $\sg_0(\cdot, \al)$ are locally Lipschitz continuous function for each $\al\in \M$ and are such that \eqref{g.1} has a unique solution $\Phi\cd$, with $\Phi(t)\ge 0, t\geq 0$ and $\sup\limits_{t\ge 0}\E|\Phi(t)|<\infty$.
We note that the price depends on the current population size, the environmental regime, time (through \eqref{g.1}), and is also influenced by randomness. Note that  if $\sg_0(x, \al)\equiv 0$, $\Phi\cd$ is simply time dependent and no longer uncertain. In this setting the performance and the value functions are given by
\beq{g.2}
	\begin{split}
		\wdh J(x, \al, U\cd):&=  \E_{x, \al}  \int_0^{\infty} e^{-\delta s}  \big( p_0(X(s), \al(s), U(s)) + \Phi(s) \big) U(s)  ds, \\
		\wdh V(x, \al) &= \sup\limits_{U\cd \in \mathcal{A}_{x, \al}} \wdh J\big(x, \al, U\cd \big).
	\end{split}
\eeq

\subsubsection{Constructing the approximation scheme.} In order to treat the uncertain price function given by \eqref{g.1}, we combine  $\big(X\cd, \al\cd \big)$ and $\Phi\cd$ into one controlled process $\big(\Phi\cd, X\cd, \al\cd \big)$ with initial condition $(\Phi(0), x, \al)=(\phi, x, \al)$.  The set of admissible strategies $\mathcal{A}_{x, \al}$ is defined as in Section \ref{sec:for} and does not depend on $\Phi(0)$. With the price function above,
the performance function becomes
$$
\wdh J\big(\phi, x, \al, U\cd \big):=  \E_{\phi,x, \al}  \int_0^{\infty} e^{-\delta s} \big( p_0(X(s), \al(s), U(s)) + \Phi(s) \big) U(s)ds,
$$
where $\E_{\phi, x, \al}$ denotes the expectation with respect
to the probability law when the process $(\Phi(t), X(t), \al(t))$ starts with initial condition $(\phi, x, \al)$
and the value function is
$$\wdh V(\phi, x, \al) = \sup\limits_{U\cd\in \mathcal{A}_{x, \al}} \wdh J\big(\phi, x, \al, U\cd\big).$$

To approximate the controlled process $\big(\Phi\cd, X\cd, \al\cd\big)$, we construct a discrete-time controlled Markov chain $\left\{(\Phi^h_n, X^h_n, \al^h_n): n \in \mathbb{Z}_{\ge 0}\right\}$ with state space $\wdh S_{h}\times \M$, where
$$\wdh S_{h}: = \left\{(k_1 h, k_{2} h)'\in \rr^2: k_i\in \mathbb{Z}_{\ge 0}\right\}.$$
Note that we add $\{\Phi^h_n\}$ to approximate the evolution of the price component $\Phi\cd$. Therefore, the computations are more involved than those in the preceding settings.
At each step $n$, we need to specify a control $U^h_n\in \mathcal{U}$.
We need to define transition probabilities $q^h \big((\phi, x, k), (\psi, y , l) | u \big)$ and interpolation intervals $\Delta t^h (\phi, x,  k, u)$ so that the controlled Markov chain $\left\{( \Phi^h_n, X^h_n, \al^h_n) \right\}$ is locally consistent with respect to the controlled diffusion \eqref{e.2} and  \eqref{g.1}.

 For $(\phi, x, \al)\in \wdh S_h\times \M$,
the class $\mathcal{A}^h_{x, \al}$ of all admissible control sequences $U^h$ for initial state $(\phi, x, \al)$ can be defined as before. Then the performance function for the controlled Markov chain is defined as
$$
\wdh J^h(\phi, x, \al, U^{h}) =  \E_{\phi, x, \al}\sum_{m=0}^{\infty} e^{-\delta t_m^h} \big( p_0(X^h_m, \al^h_m, U^h_m)  + \Phi^h_n\big)  U^h_m \Delta t_{m}^{h}.
$$
The value function of the controlled Markov chain is
$$\wdh V^h(\phi, x, \al) = \sup\limits_{ U^{h}\in \mathcal{A}_{x, \al} } \wdh J^h (\phi, x, \al, U^{h}).
$$
As a natural analogue to Theorem \ref{thm.1} we get the following convergence result.

\begin{thm} \label{thm.3}
	Suppose Assumption \ref{a:1} holds,  $b_0(\cdot, \al)$ and $\sg_0(\cdot, \al)$ are locally Lipschitz continuous function for any fixed $\alpha\in \M$, and that \eqref{g.1} has a unique solution $\Phi\cd$ for which $\Phi(t)\ge 0$ and $\sup\limits_{t\ge 0}\E|\Phi(t)|<\infty$. Then for any $(x, \al) \in \R_+\times \M$ and $\phi = \Phi(0)$ one has
	$$\wdh V^h(\phi, x, \al)\to \wdh V(\phi, x, \al)$$ as $h\to 0$.
	Furthermore, for sufficiently small $h$, a near-optimal harvesting-stocking strategy of the controlled Markov chain $\left\{(\Phi^h_n, X^h_n, \al^h_n)\right\}$ is also a near-optimal harvesting-stocking policy of $\big(\Phi\cd, X\cd, \al\cd \big)$.
\end{thm}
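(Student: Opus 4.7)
The plan is to mimic the argument used for \thmref{thm.1}, but with the joint state $(\Phi(\cdot),X(\cdot),\al(\cdot))$ in place of $(X(\cdot),\al(\cdot))$. The Kushner--Dupuis Markov chain approximation method reduces the convergence $\wdh V^h\to\wdh V$ to three ingredients: local consistency of the approximating chain, tightness of the interpolated processes together with a relaxed control representation, and identification of every weak subsequential limit as a controlled solution of \eqref{e.2}--\eqref{g.1}. The novelty compared with \thmref{thm.1} is only the addition of the $\Phi$-coordinate and the fact that the integrand $(p_0(X,\al,U)+\Phi)U$ is no longer bounded; otherwise the skeleton of the proof is the same.

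First I would construct the transition probabilities $q^h\bigl((\phi,x,k),(\psi,y,l)\mid u\bigr)$ and interpolation intervals $\Delta t^h(\phi,x,k,u)$ so that the chain $\{(\Phi^h_n,X^h_n,\al^h_n)\}$ is locally consistent with the uncoupled system \eqref{e.2} and \eqref{g.1}: the conditional mean increment of $X^h$ matches $(b(x,\al)-u)\Delta t^h$, that of $\Phi^h$ matches $b_0(\phi,\al)\Delta t^h$, the conditional variances match $\sg^2$ and $\sg_0^2$ times $\Delta t^h$, the cross-variance vanishes (reflecting independence of $w$ and $w_0$), and the environmental component $\al^h$ has the correct exponential jump rates $q_{\al k}$. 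Because $w$ and $w_0$ are independent, the construction decouples and can be taken as a product of two one-dimensional approximations on the grid $\wdh S_h$. Next I would introduce the continuous-time piecewise-constant interpolations $\Phi^h(\cdot),X^h(\cdot),\al^h(\cdot),U^h(\cdot)$ on the time scale $t^h_n$ and, using the local consistency and the Lipschitz/growth assumptions on $b,\sg,b_0,\sg_0$, verify tightness of the laws of these interpolations in the Skorokhod space; the control process is represented as a relaxed control and its space is automatically compact in the Young topology since $\mathcal{U}$ is compact.

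Having extracted a weakly convergent subsequence and invoked the Skorokhod representation, I would identify the limit via the martingale problem: the martingale increments $X^h_{n+1}-X^h_n-(b-u)\Delta t^h$ and $\Phi^h_{n+1}-\Phi^h_n-b_0\Delta t^h$ converge in the appropriate sense to the continuous martingales with quadratic variation $\int\sg^2\,ds$ and $\int\sg_0^2\,ds$ respectively, yielding an admissible limit triple $(\Phi,X,\al)$ driven by independent Brownian motions and satisfying \eqref{e.2} and \eqref{g.1} under some admissible relaxed control $U(\cdot)$. Comparing $\wdh J^h$ and $\wdh J$ then gives $\limsup_h \wdh V^h(\phi,x,\al)\le \wdh V(\phi,x,\al)$ along any maximizing sequence of chain controls, and standard approximation of an $\eps$-optimal continuous-time control by admissible chain controls gives the matching inequality $\liminf_h \wdh V^h\ge \wdh V-\eps$; letting $\eps\to 0$ proves the claim.

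The main obstacle, and the genuine difference with \thmref{thm.1}, will be the failure of boundedness in the running payoff $(p_0(X,\al,U)+\Phi)U$ coming from the unbounded price $\Phi(\cdot)$. To handle it I would combine the discount factor $e^{-\delta s}$ with the standing hypothesis $\sup_{t\ge 0}\E|\Phi(t)|<\infty$ to establish two things: (i) a finite-horizon truncation error $\E\int_T^\infty e^{-\delta s}|\Phi(s)U(s)|\,ds\le \|\mathcal{U}\|_\infty\,\delta^{-1}e^{-\delta T}\sup_{t\ge 0}\E|\Phi(t)|\to 0$ uniformly in the admissible controls and in $h$, and (ii) the analogous bound for the chain $\Phi^h$, which needs a uniform-in-$h$ moment estimate $\sup_{h,n}\E|\Phi^h_n|<\infty$ that follows from the Lipschitz structure of $b_0,\sg_0$ and local consistency via a discrete Gronwall argument. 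With these two uniform integrability estimates in hand, convergence of $\wdh J^h$ to $\wdh J$ under weak limits reduces to the bounded-payoff case already used for \thmref{thm.1}, and the rest of the argument goes through verbatim.
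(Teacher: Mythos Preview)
Your proposal is correct and matches the paper's approach: the paper gives no separate proof of \thmref{thm.3}, stating at the start of the convergence appendix only that ``the other formulas can be handled in a similar way'' to the detailed argument for \thmref{thm.1}, and your outline fills in precisely the adaptations needed, including the one genuinely new issue---the unbounded running payoff coming from $\Phi$---which the paper itself does not flag. The only step to tighten is the justification of $\sup_{h,n}\E|\Phi^h_n|<\infty$: a discrete Gronwall inequality from merely locally Lipschitz $b_0,\sg_0$ yields at best exponential-in-time growth, so the uniform-in-time discrete moment bound must instead be inherited from whatever Lyapunov or dissipativity structure on $b_0,\sg_0$ underlies the standing hypothesis $\sup_{t\ge 0}\E|\Phi(t)|<\infty$, transferred to the chain via local consistency.
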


\subsection{The combined effects of seasonality and Markovian switching}
\

\noindent
In this section, we focus on another extension of \eqref{e.2} in which the population model is periodic. This is very natural if one considers that seasonal effects are periodic and strongly influence the dynamics of species.

There have been multiple papers treating periodic environments in ecology \citep{C77, C80, HC97, RMK93}. Some of these have shown how periodic forcing can create interesting new phenomena. In \cite{WH19} the authors present a synthesis on the important role of seasonality in ecology. They explain how our knowledge on seasonal dynamics is limited both empirically and theoretically. Few studies have looked at the joint effects of periodic and random fluctuations. The current section is a first step in the direction of finding the optimal harvesting-stocking strategies when both random and seasonal effects are taken into account.

If we include seasonality the dynamics is given by
\beq{h.1}X(t)=x+\int\limits_0^t \big( b(s, X(s), \al(s)) - U(s)\big) ds +  \int\limits_0^t  \sg \big(s, X(s), \al(s) \big) dw(s),\eeq
where $b(\cdot, x, \al)$, $\sg(\cdot, x, \al)$ are periodic with period $T>0$. We also suppose that the price-cost function $p(t, x, \al, u)$ is periodic with period $T$ for each $(x, \al, u)$.
The performance function becomes
\beq{h.2}
\wdt J(x, \al, U\cd):=  \E_{x, \al}  \int_0^{\infty} e^{-\delta s} p\big(s, X(s), \al(s), U(s)\big) ds,
\eeq
 and the value function is defined in the standard way. Since the functions $b\cd$, $\sg\cd$, and $p\cd$ are time-dependent and  the time horizon of the control problem is infinite, the approach we used for the previous formulations no longer works.
 In order to treat this case, we introduce a deterministic function $\Ga\cd$ to capture the time and convert it into the interval $[0, T)$. Thus, we will study the total expected discounted value starting from any time $\ga\in [0, T)$.
 Specifically, we consider a generalization of \eqref{h.1}--\eqref{h.2} given by
 \beq{h.3}
	\begin{split}
		\ad X(t)=x+\int\limits_\ga^t \big( b(\Ga(s), X(s), \al(s)) - U(s)\big) ds +  \int\limits_\ga^t  \sg(\Ga(s), X(s), \al(s)) dw(s),\\
		\ad \Gamma(t) = t-mT, \quad \text{ where } m \in \mathbb{Z}_{\geq 0} \text{ such that } t \in [mT,mT+T) .	
	\end{split}
 \eeq

 We consider the combined process $\big(\Ga\cd, X\cd, \al\cd\big)$.  The performance function  and the value function are given by
 \beq{h.4}
	\begin{split}
		\wdt J(\ga, x, \al, U\cd):&=  \E_{\ga, x, \al}  \int_\gamma^{\infty} e^{-\delta s} p\big(\Ga(s), X(s), \al(s), U(s)\big) ds,\\
		\wdt V(\ga, x, \al) &= \sup\limits_{U\cd \in \mathcal{A}_{\ga, x, \al}} \wdt J\big(\ga, x, \al, U\cd\big),
	\end{split}
\eeq
where $\E_{\ga, x, \al}$ denotes the expectation with respect
to the probability law of the process $\big(\Ga(t), X(t), \al(t)\big)$ having initial conditions $\big(\Ga(\ga), X(\ga), \al(\ga)\big)=(\ga, x, \al)$.

\subsubsection{Constructing the approximation scheme.} In this case, we need to use two positive parameters $h_1$ and $h_2$, where $T$ is a multiple of $h_1$. Let $h=(h_1, h_2)$.
We construct a discrete-time controlled Markov chain $\left\{(\Ga^h_n, X^h_n, \al^h_n): n \in \mathbb{Z}_{\ge 0}\right\}$ with state space $\wdt S_{h}\times \M$, where
$$\wdt S_{h}: = \left\{(\ga, x)=(k_1 h_1, k_2 h_2)'\in \rr^2: k_i\in \mathbb{Z}_{\ge 0}, k_1\le T/h_1\right\}.$$
Note that we add $\{\Ga^h_n\}$ to approximate the time $\Ga\cd$. At each step $n$, we need to specify a control $U^h_n\in \mathcal{U}$.
We need to define transition probabilities $q^h \big((\ga, x, k), (\lambda, y , l) | u\big)$ and interpolation intervals $\Delta t(\ga, x, k, u)$ so that the controlled Markov chain $\left\{(\Ga^h_n, X^h_n, \al^h_n)\right\}$ is locally consistent with respect to the controlled diffusion \eqref{h.3}.

For $(\ga, x, \al)\in \wdt S_h\times \M$,
the class $\mathcal{A}^h_{\ga, x, \al}$ of all admissible control sequences $U^h$ for initial state $(\ga, x, \al)$ can be defined as before. Then the performance function for the controlled Markov chain is defined as
$$
\wdt J^h(\ga, x, \al, U^{h}) =  \E\sum_{m=0}^{\infty} e^{-\delta t_m^h} p(\Ga^h_n, X^h_m, \al^h_m, U^{h}_m) \Delta t_{m}^{h}.
$$
The value function of the controlled Markov chain is
$$
\wdt V^h(\ga, x, \al) = \sup\limits_{  U^{h}\in \mathcal{A}_{\ga, x, \al} } \wdt J^h (\ga, x, \al, U^{h}).
$$
It should be noted from the periodicity of the problem that for any $(\ga, x, \al)\in [0, T) \times \rr_+ \times \M$ one has
$$\wdt V(\ga, x, \al) = e^{-\delta T}\wdt V(\ga+T, x, \al), \quad \wdt V^h(\ga, x, \al) = e^{-\delta T}\wdt V^h(\ga+T, x, \al).$$
This property will allow us to work with the compact time interval $[0, T]$ instead of the entire infinite horizon $[0, \infty).$
The convergence result is given below.

\begin{thm} \label{thm.4}
Suppose the following assumptions hold:
\begin{enumerate}
  \item The functions $b(\cdot, \cdot, \al)$ and $\sigma(\cdot, \cdot, \al)$ are locally Lipschitz continuous for each $\al\in \M$.
  \item For any initial condition $(\ga, x, \al)\in  [0, T) \times \R_+ \times \M$, the uncontrolled system \eqref{h.1}  has a unique global solution.	
  \item The price $p(\cdot, \cdot, \al, \cdot)$ is bounded and continuous for all $\al\in \M$.
  \item For each $(x, \al)\in \rr_+\times \M$ and $u\in \mathcal{U}$, the functions $b(\cdot, x, \al)$, $\sg(\cdot, x, \al)$, $p(\cdot, x, \al, u)$ are periodic with period $T>0$.
\end{enumerate}
 Then for any $(\ga, x, \al) \in [0, T)\times \R_+\times \M$, we have
	 $$\wdt V^h(\ga, x, \al)\to \wdt V(\ga, x, \al)$$ as $h\to 0$.
	Furthermore, for sufficiently small $h$, a near-optimal harvesting-stocking strategy of the controlled Markov chain $\left\{(\Ga^h_n,X^h_n, \al^h_n)\right\}$ is also a near-optimal harvesting-stocking policy of $\big(\Ga\cd, X\cd, \al\cd\big)$.
\end{thm}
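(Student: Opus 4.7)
The plan is to adapt the Markov chain approximation method used for Theorem \ref{thm.1} (and analogously Theorem \ref{thm.3}) to the setting where the drift, diffusion, and price-cost coefficients depend on an additional periodic time coordinate $\Ga\cd$. The key additional ingredient is that $\Ga$ is a deterministic periodic reset function, so it can be tracked exactly on a grid whose mesh $h_1$ divides $T$; the switching component $\al$ and the species component $X$ are then handled essentially as before. The periodicity property noted in the paper, which relates $\wdt V(\ga, x, \al)$ to $\wdt V(\ga+T, x, \al)$ via the discount factor, reduces the analysis to $\ga \in [0,T]$, where compactness of the time domain makes the convergence arguments feasible despite the infinite horizon.

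First I would construct continuous-time interpolations $\big(\Ga^h\cd, X^h\cd, \al^h\cd, U^h\cd\big)$ from the chain $\big(\Ga^h_n, X^h_n, \al^h_n, U^h_n\big)$ using the interpolation times $t^h_n$, and verify local consistency with \eqref{h.3}: the conditional one-step mean and variance of $X^h_{n+1}-X^h_n$ match $\big(b(\Ga^h_n, X^h_n, \al^h_n) - U^h_n\big)\Delta t^h_n$ and $\sg^2(\Ga^h_n, X^h_n, \al^h_n)\Delta t^h_n$ up to $o(\Delta t^h_n)$; the switching intensities match $q_{ij}$; and $\Ga^h_{n+1} = (\Ga^h_n + \Delta t^h_n) \bmod T$. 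Writing $X^h$ as the sum of a drift term, a martingale term, and a remainder produces a representation that formally resembles \eqref{h.3} with an approximating Brownian motion $W^h$ built from the martingale differences. With $\Delta t^h_n$ bounded above by a constant multiple of $h_1 \vee h_2^2$, standard estimates show that the family $\big(\Ga^h, X^h, \al^h, U^h, W^h\big)$ is tight in the Skorohod topology on $D([0,\infty))$; the control $U^h$ is tightened using the relaxed-control formulation on $\mathcal{U}$, and $\Ga^h$ is tight because it is uniformly close to the deterministic path $t \bmod T$.

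Next, by Prokhorov's theorem and the Skorohod representation, I would pass to a subsequential weak limit $\big(\Ga, X, \al, U, W\big)$ realized on a common probability space and identify $\Ga(t) = t \bmod T$, $W$ as a standard Brownian motion, $\al$ as the original Markov chain, and $U$ as an admissible relaxed control. Continuity of $b$, $\sg$, and $p$ together with periodicity in the first argument allows one to pass to the limit in the stochastic integrals and in the drift on compact time intervals; the remainder term vanishes by local consistency. To handle the infinite-horizon cost, I would use the boundedness of $p$ to truncate at $T_0$ large and control the tail by $\|p\|_\infty e^{-\delta T_0}/\delta$, uniformly in $h$. Combining both directions, taking limits along near-optimal chain controls produces $\limsup_{h\to 0}\wdt V^h \le \wdt V$, while piecewise-constant/chattering approximations of any $\e$-optimal ordinary control for \eqref{h.3} yield corresponding chain controls whose values converge, giving $\liminf_{h\to 0}\wdt V^h \ge \wdt V - \e$; sending $\e\to 0$ establishes $\wdt V^h \to \wdt V$, and the near-optimality transfer is a standard consequence.

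The main obstacle I anticipate is the consistent treatment of the deterministic periodic component $\Ga$ alongside the stochastic components. Although $\Ga^h$ simply ticks forward in sync with real time and resets at multiples of $T$, the interpolation intervals $\Delta t^h_n$ depend on the joint state and control, so ensuring that $\Ga^h(t) \to t \bmod T$ uniformly on compacts (in probability) requires controlling the accumulated discrepancies $\sum \Delta t^h_m - t$ arising from the randomized time scaling and verifying that they are negligible. Because the limiting $\Ga$ is discontinuous at the deterministic reset times $mT$, some care is also needed to preserve continuity of the cost functional at the limit; since the jumps occur at fixed times of Lebesgue measure zero and the periodicity of $p$ makes the integrand compatible with the reset, this issue can be handled by testing against continuous bounded functionals of the restriction of the path to $[0,\infty)\setminus T\mathbb{Z}$. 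Once these points are in place, the remainder of the argument parallels the proofs of Theorems \ref{thm.1} and \ref{thm.3}.
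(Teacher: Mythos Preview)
Your proposal is correct and follows the same route the paper indicates: the paper does not give a separate proof of Theorem~\ref{thm.4} but states that ``the other formulas can be handled in a similar way'' to the proof of Theorem~\ref{thm.1} (via Theorems~\ref{thm:thm} and~\ref{thm:4.6}), and your outline is precisely that adaptation---continuous-time interpolation, local consistency, tightness, relaxed controls, weak-limit identification, and the two-sided inequality for the value functions.

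One remark on the obstacle you flag. In the periodic scheme the paper actually sets $\Delta t^h(\ga,x,k,u)=h_1$ identically (see \eqref{bb.2}), so the interpolation intervals are \emph{not} state- or control-dependent in this formulation. Consequently $t^h_n=nh_1$ and $\Ga^h_n=\Ga^h(t^h_n)=nh_1\bmod T$ exactly, and the interpolated $\Ga^h(t)$ differs from $t\bmod T$ by at most $h_1$ deterministically. The ``accumulated discrepancies'' you worry about therefore do not arise, and the issue of jumps of $\Ga$ at $T\mathbb{Z}$ is handled for free by periodicity of $b$, $\sg$, $p$. With this simplification your argument goes through without further work.
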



\section{Numerical Examples}\label{sec:fur}
We explore some numerical examples, using various models and assumptions.
Throughout this section, we suppose the time discount rate is $\delta = 0.02$. Let $B\in (0, \infty)$ be an upper bound introduced for computational purposes. If not specified, we will take $B=4$ in our examples.
We assume that the environment only switches between two states, so $\M= \{1, 2\}$. We exhibit various price-cost formulations to explore for features, and possibly new phenomena, in the various considered models.

\subsection{Introducing switching in a Verhulst-Pearl system}\label{sec:intro_lv}
\

\noindent
A model that is extensively used in biology is the Verhulst-Pearl one. The dynamics that includes switching is given by

$$dX(t)= X(t)\big(\mu(\alpha(t)) - \kappa(\alpha(t)) X(t) \big)\,dt + \sigma \big(\alpha(t)\big) X(t)dw(t).$$
Here $\mu(\alpha)$ is the growth rate in environment $\alpha$, $\kappa(\alpha)$ is the intraspecific competition rate in environment $\alpha$, and $\sigma^2(\alpha)$ is the variance of the per-capita environmental fluctuations. Suppose there are $m_0$ environmental states in environment $\alpha\cd$.
One can fix an environment $\alpha$ and look at the dynamics of the population in that environment
$$dX^\al(t)= X^\al(t)\big(\mu(\alpha) - \kappa(\alpha) X^\al(t)\big)\,dt + \sigma(\alpha) X^\al(t)dw(t).$$
The stochastic growth rate of the species \citep{CE89,C00,SBA11,HN18} in environment $\al$ is given by
$$
r(\alpha) = \mu(\al)-\frac{\sigma^2(\al)}{2}.
$$
Following \cite{EHS15}, this stochastic growth rate completely determines the long term behavior of the population in each fixed environment. If $r(\alpha)>0$ we get the convergence to a unique stationary distribution on $(0,\infty)$. If $r(\alpha)<0$ the population goes extinct in environment $\alpha$ and with probability $1$ we have
$$
\lim_{t\to \infty} \frac{\ln X^\al(t)}{t} = r(\al)<0.
$$
If $r(\al)=0$, by \cite{EHS15} the population process is null recurrent in environment $\al$. This means the population does not go extinct but also does not converge to a stationary distribution; it keeps fluctuating between large and small values.
Let $(\nu_1,\dots,\nu_{m_0})$ be the stationary distribution of the Markov chain $\alpha(t)$. If
$$r = \sum_{k=1}^{m_0}\nu_k\left(\mu(k)-\frac{\sigma^2(k)}{2}\right)>0,$$
we get by \cite{HL20} that the population converges, if there is switching, to a stationary distribution.

We consider a Verhulst population model with $\mu( \alpha)= 4-\alpha, \; \kappa(\alpha)=2, \; \sigma(\alpha)= 1 , \; (x, \alpha)\in  \R_+\times \{1, 2\}.$
As a result the stochastic growth rate in environment $\alpha$ is
\begin{equation*}
	r(\alpha) = (4-\alpha) - \frac{1}{2},
\end{equation*}
which implies $r(1)>0$ and $r(2)>0$, so if the environment would be fixed to either of the $\alpha$ values, the species would converge to a unique stationary distribution on $\R_+$. Suppose that the generator $Q$ of the Markov chain $\al\cd$ is given by $q_{11} = -0.1$, $q_{12} = 0.1$, $q_{21}=0.1$, $q_{22} = -0.1$. This implies that the stationary distribution of $\alpha(t)$ is $(\nu_1,\nu_2) = (0.5, 0.5)$. Henceforth, let the set of controls be $\mathcal{U}=\{u :u = k/500, k\in \mathbb{Z}, -1000\le k\le 1500\}$ where unspecified.

In a first experiment, we compare this model to a baseline model, where $\mu(\al) = 2.5$, which is in between the two values for the switching environments.
To isolate the effect of switching, in our first example we keep the price and cost functions simple: $P(x,\alpha) = 1$ and $C(x,\alpha,u) = 0$.

We have presented the formulation theoretically in Section \ref{sec:for}, and we describe a detailed procedure in Section \ref{sec:num} and Appendix \ref{sec:ap_a}.
In the following numerical formulations, we will apply similar procedures, described in the same sections.
For an admissible strategy $U\cd$ we have
\begin{equation*}
	J\big(x, \al, U\cd \big)=\E\int_0^\infty e^{-\delta s} p\big(X(s), \al(s), U(s)\big) ds.
\end{equation*}
Based on the algorithm constructed above and in the Appendixes, we carry out the computation by using the methods in \cite{Kushner92}.
We take the initial control $U_0(x, \al)\equiv0$ and set the initial values $V^h_0(x, \al)\equiv 0$.
We outline how to find the sequence of values of $V_n^h(\cdot)$ as follows.
At each level $x=h,2h, \dots, B$, $\al\in \M$, and control $u\in \mathcal{U}$, we compute
$$V_{n+1}^h(x, \al \,|\, u) =e^{-\delta \Delta t^h(x, \al, u) }\sum\limits_{(y, \beta)\in S_h \times \M} V^{h}_n (y, \beta
) q^h \big((x, \al), (y, \beta) | u \big) + p(x, \al, u)\Delta t^h(x, \al, u).$$
Working with the compact set state space $[0, B]$, we use reflection if $x=B$; that is, $V^h_n(B+h, \beta) = V^h_n(B, \beta)$ for any $\beta\in \M$.
Then we choose the control $U^h_{n+1}(x, \al)$ and record an improved value
$V^h_{n+1}(x, \al)$ by
$$U^h_{n+1}(x, \al) = \argmax_{u\in \mathcal{U}} V_{n+1}^h(x, \al \,|\, u),  \quad V^h_{n+1} (x, \al) = V^h_{n+1}(x, \al\,|\, U^h_{n+1}(x, \al)).$$
The iterations stop as soon as the increment $V^h_{n+1}\cd-V^h_n\cd$ reaches some tolerance level. We set the error tolerance to be $10^{-8}$.

\begin{figure}[h!tb]
	\begin{center}
	\subfloat{{
		\includegraphics[scale=0.6]{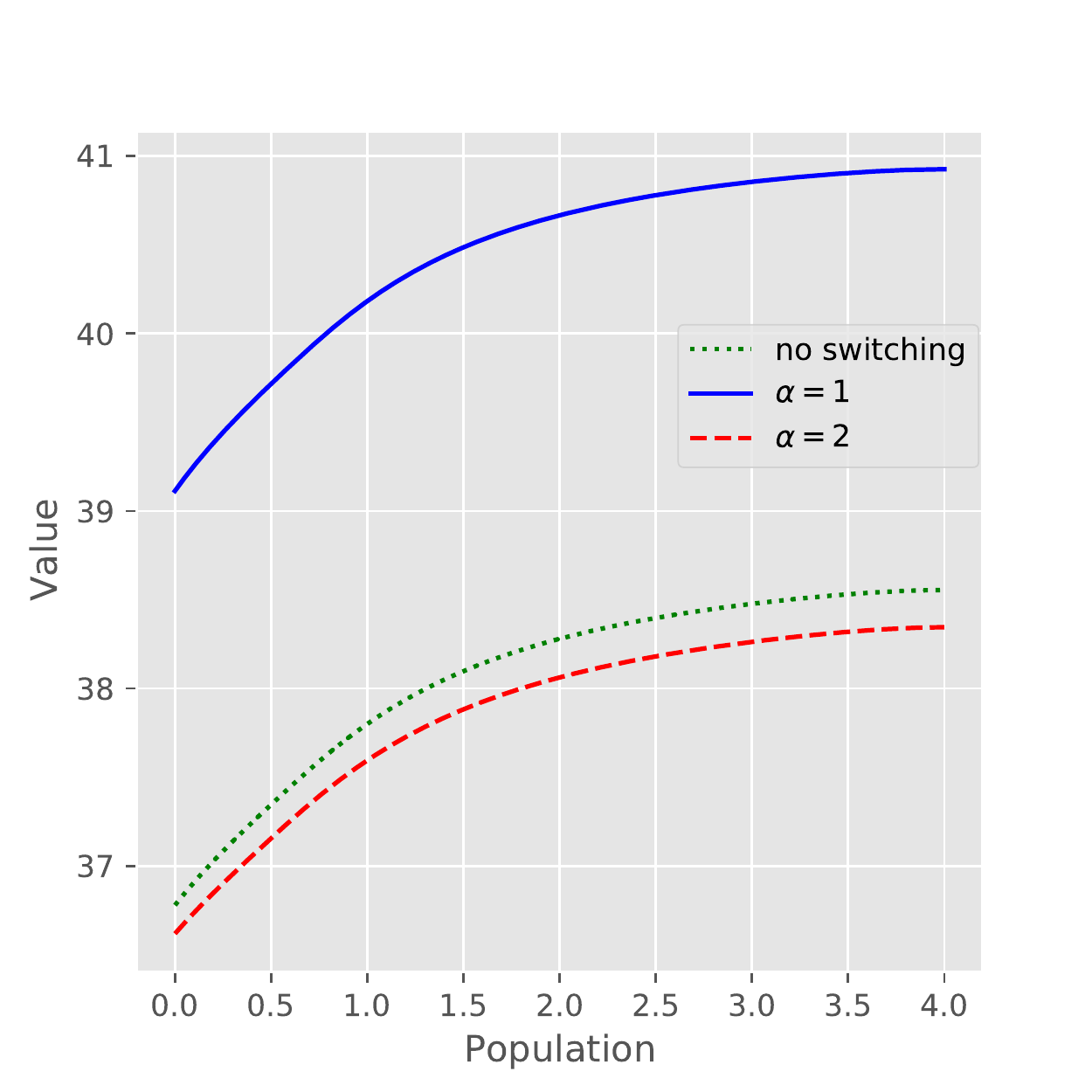}
	}}
	\subfloat{{
		\includegraphics[scale=0.6]{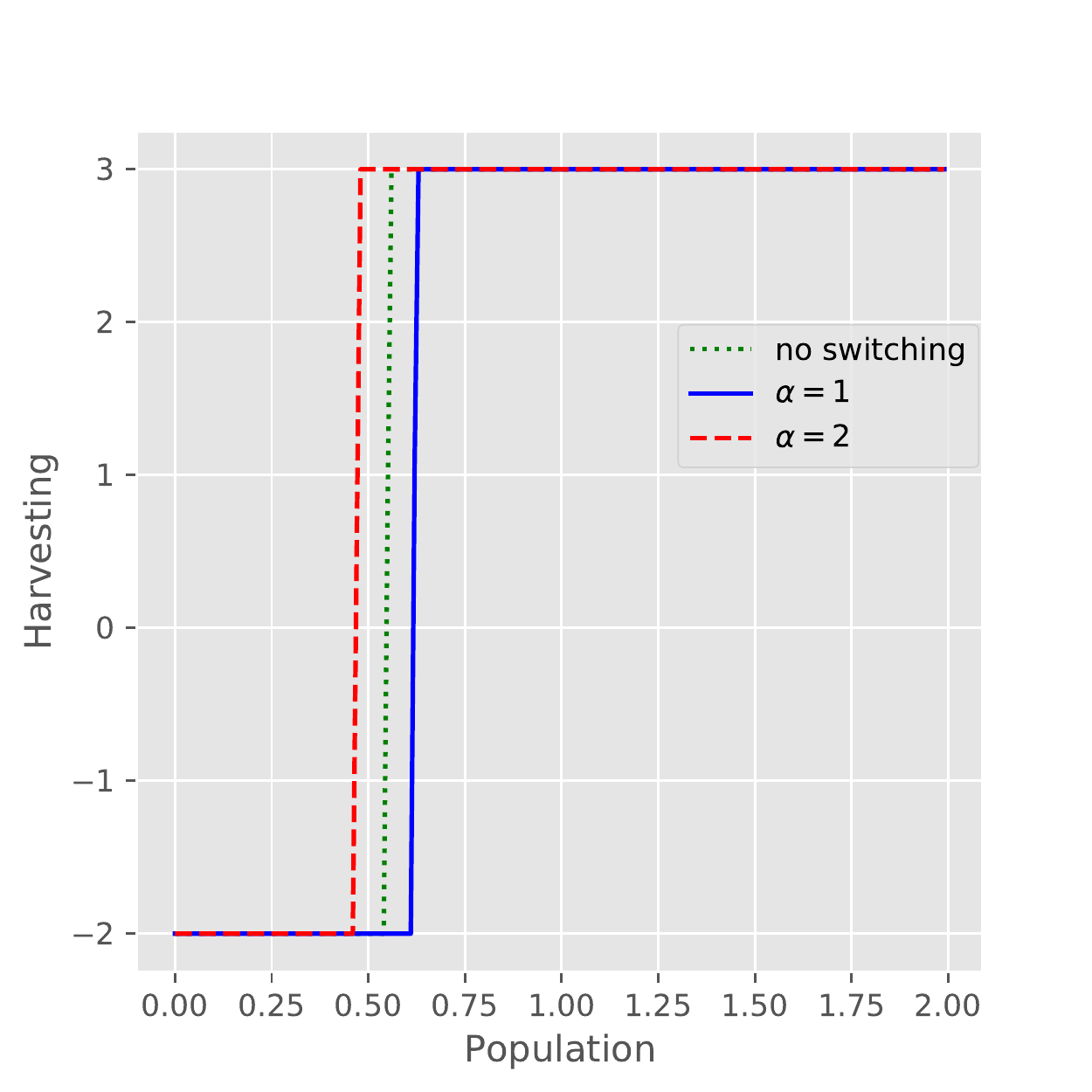}
	}}
	\caption{Value function (left) and optimal harvesting-stocking rate (right) for a model with switching affecting $\mu(\al) = 4 - \al$, compared with a baseline model with no switching and $\mu(\al) = 2.5.$}
	\label{fig:switching_1}
	\end{center}
\end{figure}

The numerical result is shown in Figure \ref{fig:switching_1}. The value function in the left panel is increasing in both the switching and the baseline models, as expected. Moreover, it is concave, which implies that the marginal value of one small unit of population decreases. This is expected because population growth becomes less favourable as the population size increases---a result of intra-species competition.

Because the transition probability rates were chosen to be small, the value function of the baseline model is in-between the value functions for the two $\al$ states of the switching model. When the transition probability rates increase, that will not always be the case, because the value function in one state is determined, in part, by the value function in the other state.

Another expected result is that the optimal harvesting rate in the second panel of Figure \ref{fig:switching_1} is bang-bang---it switches between extremes at a certain population threshold. This should be the case, because the cost of harvesting-stocking is 0, and therefore independent of the value. This was shown theoretically for a model without switching in \citet{HNUW18}. In the second panel, we also see that the optimal stocking-harvesting transition happens at a lower population level in the growth-unfavorable state $\al=2$, when compared with the intermediate growth baseline and the growth-favorable state $\al=1$. Intuitively, lower growth prospects imply optimal extraction should start at a lower population level.


Numerical experiments where $\sigma(\al)$ and $\kappa(\al)$ are also allowed to depend on $\al$ show results with similar features, so we omit them for brevity.

\subsection{Analysis of the effect of the control cost}\label{sec:cost_num}
\

\noindent
We expect to find all or nothing harvesting-stocking when the cost is not convex in the rate, even when there is environmental switching. Using the same set-up as above, but now with a convex cost function $C(x, \al, u)=u^2/2$, we find that the optimal control is not bang-bang anymore (Figure \ref{fig:switching_cost_1}). Further numerical experiments (Figures \ref{fig:cost_exp_1}, \ref{fig:cost_exp_3}, Appendix \ref{sec:numerical}) support the following conjecture, which says that, with cost functions that are not convex, the optimal strategies are bang-bang, with thresholds that may depend on the states $\alpha$.
\begin{conj}
Suppose we have one species that evolves according to \eqref{e.2} and suppose Assumption \ref{a:1} holds, the price $P$ only varies with the environment, the cost does not depend on the population size,  $C(x, \al, u)=C(\al,u)$ and the cost is not convex in the harvesting rate $u$. Furthermore, assume that in the absence of harvesting the species persists, i.e.,
$$\lambda(\mu_0) = \sum_{k=1}^{m_0}\nu_k\left(\mu(k)-\frac{\sigma^2(k)}{2}\right)>0.$$
One can construct the optimal harvesting strategy $U^*$ as follows. There exists a threshold $0\leq u^*(k)\leq \infty$ for the population such that, if $X(0-)=x$, then
\begin{equation}\label{e:cost_conj}
	\begin{aligned}
		U^*(t)
		&= \begin{cases}
			\inf\; \mathcal{U}, & \mbox{\text{ if } $t>0,\; \alpha(t)=k,\; X(t) < u^*(k)$},\\
			\sup\; \mathcal{U}, & \mbox{\text{ if } $t>0,\; \alpha(t)=k,\; X(t)\geq u^*(k)$}.
		\end{cases}
	\end{aligned}
\end{equation}
\end{conj}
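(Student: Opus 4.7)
The plan is to exploit the product structure of the Hamiltonian in \eqref{hjb}. Since $P$ and $C$ are independent of $x$, the $u$-dependent terms in the HJB maximand collapse to
$$
\Phi_\al(u;q) := (P(\al) - q)\,u - C(\al, u), \qquad q := V'(x,\al),
$$
where the derivative is understood in the viscosity sense where $V$ fails to be smooth. If $C(\al,\cdot)$ is not convex in the sense that $-C(\al,\cdot)$ is convex (i.e., $C$ is concave), then $\Phi_\al(\cdot;q)$ is a convex function on the compact interval $\mathcal{U}$, and its maximum is therefore attained at one of the endpoints $\inf\mathcal{U}$ or $\sup\mathcal{U}$. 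A direct comparison of the two endpoint values shows that $\sup\mathcal{U}$ is the (strict) maximizer iff $P(\al) - q > \rho(\al)$, where $\rho(\al):= [C(\al,\sup\mathcal{U}) - C(\al,\inf\mathcal{U})]/(\sup\mathcal{U}-\inf\mathcal{U})$ depends only on $C$.

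First I would verify that $V$ is a viscosity solution of \eqref{hjb} using a standard dynamic programming argument adapted to the switching framework, as in \cite{Z11}. Next, I would establish that $V(\cdot,\al)$ is nondecreasing and concave for each $\al\in\M$. Monotonicity is immediate by a coupling argument: since $p$ does not depend on $x$ outside of $u$, a larger initial population only enlarges the set of reachable states and thus of attainable rewards. Concavity is more subtle but follows heuristically from the affine dependence of the controlled drift on $u$, together with $\sigma$ being independent of $u$: a suitable mixing of admissible controls for two initial conditions $x_0<x_1$ remains admissible for an intermediate initial condition. Pragmatically, it is often cleaner to prove concavity for the discrete Markov chain value functions $V^h$ by induction in the dynamic programming recursion of Section \ref{sec:num}, and then transfer it to $V$ in the limit via Theorem \ref{thm.1}.

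Once concavity and monotonicity are in hand, the spatial viscosity subdifferential of $V(\cdot,\al)$ is nonincreasing in $x$, so substituting $q = V'(x,\al)$ into the sign test for $\Phi_\al$ produces the threshold
$$
u^*(\al) := \inf\{x \geq 0 : P(\al) - V'(x,\al) \geq \rho(\al)\} \in [0,\infty],
$$
and this yields exactly the bang-bang structure \eqref{e:cost_conj}: for $x<u^*(\al)$ the optimizer is $\inf\mathcal{U}$, and for $x\geq u^*(\al)$ it is $\sup\mathcal{U}$. The persistence hypothesis $\lambda(\mu_0) > 0$ is used to ensure that $V$ is finite and nondegenerate, so that $u^*(\al)$ is a genuine interior switching level rather than a degenerate $0$ or $\infty$.

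The main obstacle is the regularity and concavity of $V$ in $x$. Since $\sigma(0,\al)=0$, the equation \eqref{hjb} degenerates at the boundary $\{x=0\}$ and the standard parabolic-regularity route to $C^{1,2}$ smoothness of $V$ is unavailable, forcing the whole argument into the viscosity framework and requiring delicate treatment of the boundary. Moreover, concavity must be shown simultaneously for every $\al\in\M$, because the switching coupling $\sum_k q_{\al k}V(x,k)$ only preserves concavity if each $V(\cdot,k)$ is concave; a propagation-of-concavity argument along the dynamic programming iteration, or via the Markov chain approximation $V^h$, looks most promising. Finally, one must rule out the degenerate scenario in which $V'(\cdot,\al)$ is constant on an interval at the critical value $P(\al)-\rho(\al)$, which would break the strict threshold form; obtaining strict concavity of $V(\cdot,\al)$ from nondegeneracy of $\sigma$ on $(0,\infty)$ would suffice.
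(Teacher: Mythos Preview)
The statement you are addressing is labelled a \emph{conjecture} in the paper and is offered purely on the strength of numerical experiments (Figures~\ref{fig:cost_exp_1} and~\ref{fig:cost_exp_3}); the authors give no proof and do not claim one. There is therefore no proof of record to compare your proposal against---you are attempting an open problem.

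Your reduction of the $u$-dependent part of the HJB maximand to $\Phi_\al(u;q)=(P(\al)-q)u-C(\al,u)$ with $q=V'(x,\al)$ is correct and is the natural starting point. But the convexity-of-$\Phi_\al$ step requires $-C(\al,\cdot)$ to be convex, i.e., $C$ to be \emph{concave}. The conjecture only assumes $C$ is ``not convex,'' and the paper's own Appendix~\ref{sec:numerical} explicitly distinguishes the two: for $C(u)=\sqrt{|u|}$ or $C(u)=\ln(1+|u|)$ (non-convex but with a convex kink at $0$) the computed optimal control is a multi-level step function, whereas only the purely concave $C(u)=\ln(1+u/3)$ produces the two-level bang-bang form~\eqref{e:cost_conj}. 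So your argument, even if completed, proves a strictly narrower statement than the conjecture as written, and the paper's own data suggest the conjecture is false under the literal ``not convex'' reading. You should state up front that you are proving the concave case.

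Under that reading, the structural endpoint argument is sound; the real gap is the concavity of $V(\cdot,\al)$, and your proposed routes are more fragile than you indicate. The inductive route through $V^h$ runs into the fact that the transition probabilities~\eqref{a.2} and the normaliser $Q_h(x,k,u)$ depend nonlinearly on $x$ through $b(x,k)$ and $\sigma^2(x,k)$; a ``concave in $\Rightarrow$ concave out'' step for the dynamic programming operator does not go through without extra structural hypotheses on $b$ and $\sigma$ (concave drift, affine diffusion, as in the Verhulst examples) that the conjecture does not impose. The coupling sketch for monotonicity likewise needs a pathwise comparison theorem for~\eqref{e.2} with switching, which is available but requires hypotheses beyond Assumption~\ref{a:1}. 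These are exactly the points at which the authors stopped and left the statement as a conjecture; your proposal correctly locates them but does not resolve them.
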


\begin{figure}[h!tb]
	\begin{center}
		\subfloat{{
			\includegraphics[scale=0.6]{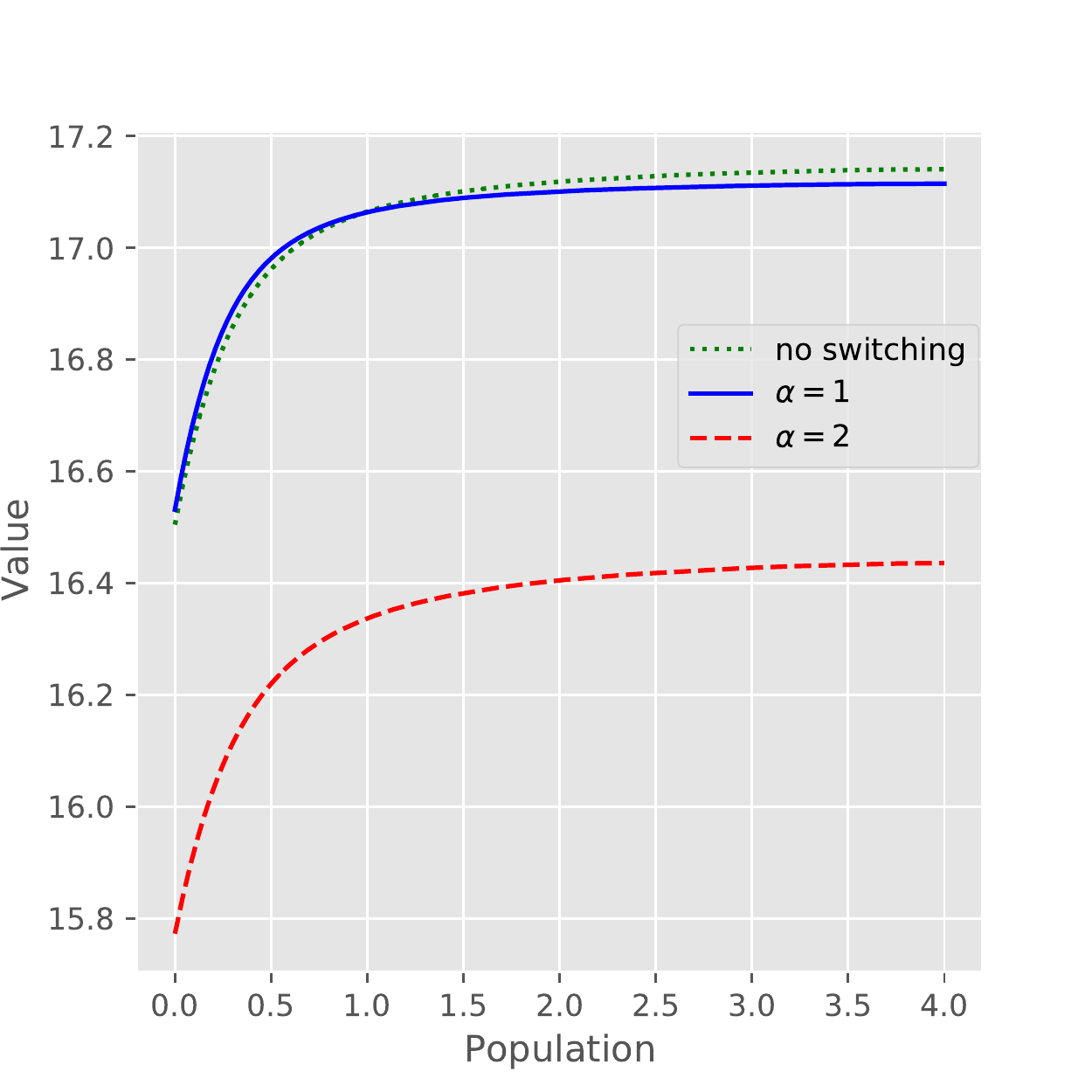}
		}}
		\subfloat{{
			\includegraphics[scale=0.6]{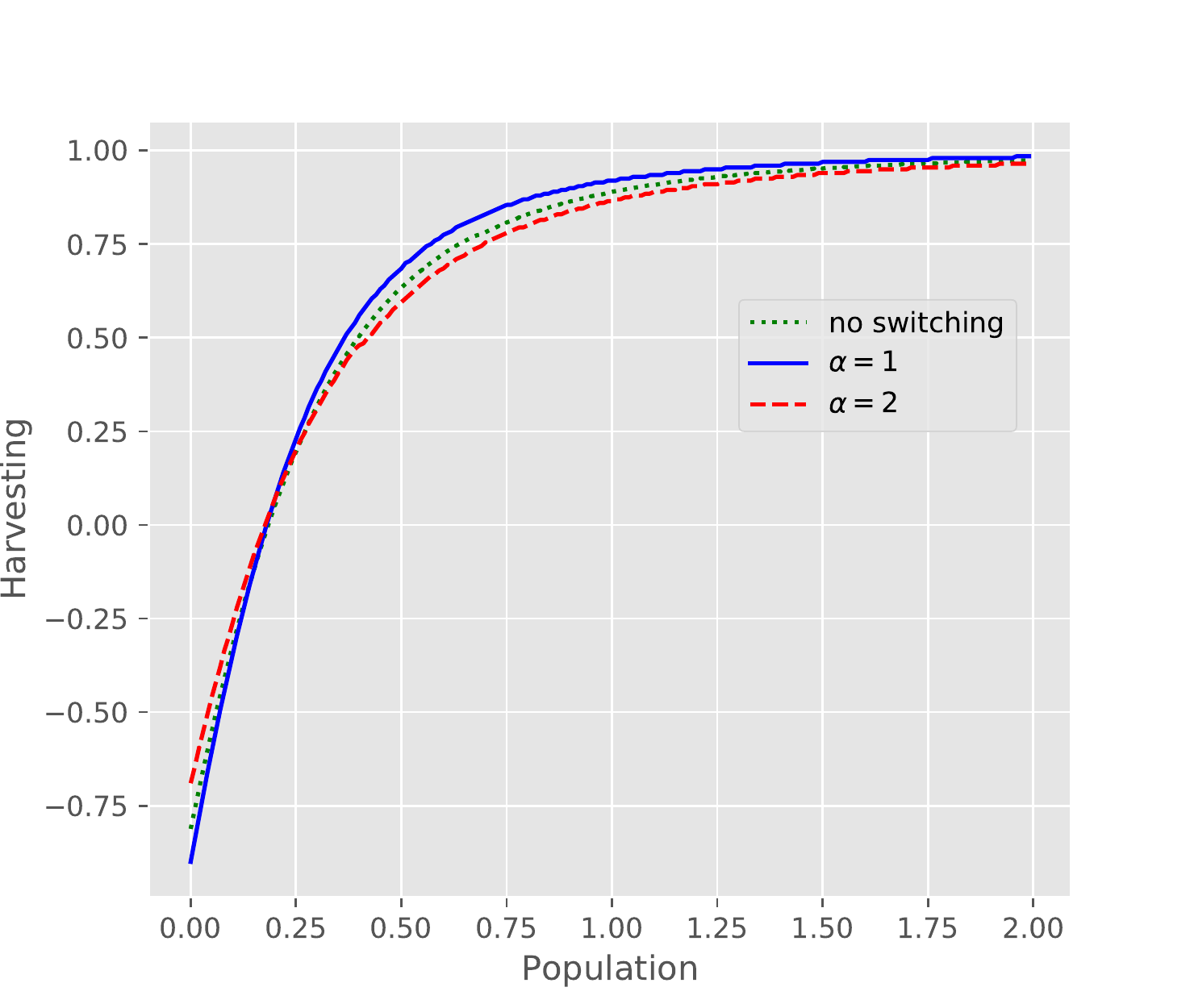}
		}}
		\caption{Value function (left) and optimal harvesting-stocking rate (right) for a model with switching affecting $\mu(\al) = 4 - \al$, and convex cost $C(x,\al,u) = u^2/2$, compared with a baseline model with no switching, $\mu(\al) = 2.5$, and the same cost function.} \label{fig:switching_cost_1}
	\end{center}
\end{figure}

From the right panel of Figure \ref{fig:switching_cost_1}, we note that the growth favorable state, $\alpha = 1$, is more conducive to expensive harvesting and stocking, which is intuitive and expected. The value functions are again concave and increasing in population, as expected. This will be common across all numerical examples considered. Henceforth we may omit the value function from figures.

\subsection{Large and small transition rates with switching}\label{sec:transition_rates}
\

\noindent
In the left panel of Figure \ref{fig:switching_cost_1}, the apparent overlap between the value function of the baseline model and the switching model in the state $\al = 1$ is coincidental. Numerical experiments show that increasing the switching probability rates pulls the value function graphs for $\al=1,2$ toward each other. If the switching rates were very small, the green baseline value function would be sandwiched between the red and blue value functions. As the switching rates become large, the value functions of the two switching states will overlap in the limit.\footnote{The value function in a switching state is influenced by the value function in the other state, since there is a probability of transition.} Figure \ref{fig:val_q_conv} shows this process for the same specification as above, when $q_{12} = q_{21} \in \{0,0.01, 0.1, 1, 10, \infty\}$. The following remark confirms the numerical experiments that suggest convergence.

\begin{figure}[h!tb]
	\begin{center}
		\subfloat{{
			\includegraphics[scale=0.6]{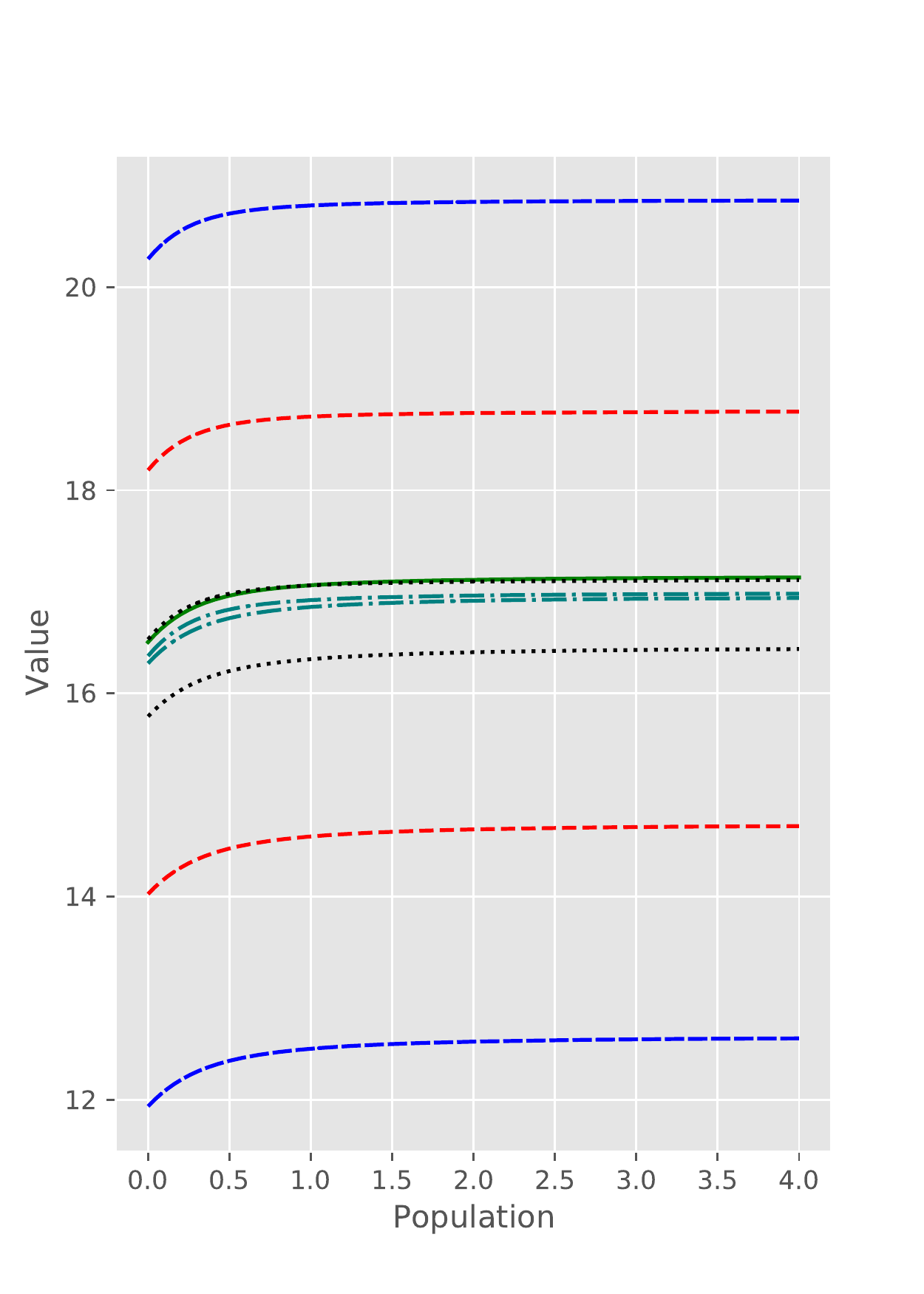}
		}}
		\subfloat{{
			\includegraphics[scale=0.6]{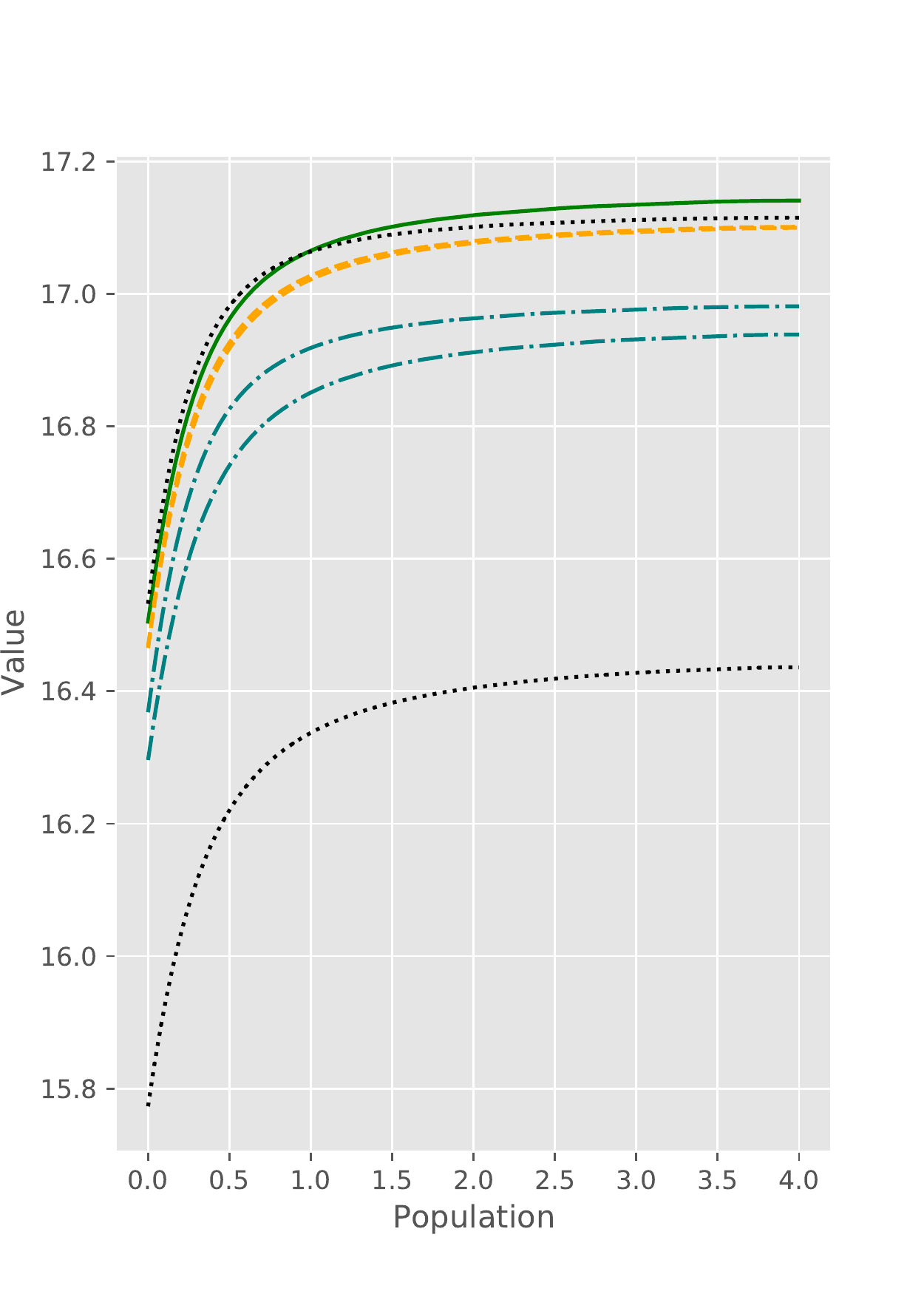}
		}}
		\caption{The left and right panels show the same value function graph, but on different vertical scales. The model is discussed in Sections \ref{sec:cost_num}, \ref{sec:transition_rates}. The green solid curves in both panels corresponds to a model without switching and $\mu = 2.5$, or to a model with $\mu = 4 - \al$ and infinitely large switching rates. The blue long-dashed curves in the left panel show models with no switching and $\mu = 3$, $\mu=2$ respectively, or equivalently a model with $\mu = 4-\al$ and infinitely slow switching rates. The red dashed curves in the left panel are for a model with $\mu = 4-\al$ and $q_{12} = q_{21} =  0.01$. The black dotted curves in both panels correspond to the same model, but $q_{12}=q_{21}=0.1$. The teal dot-dashed curves in the right panel correspond to $q_{12}=q_{21}=1$, and the orange dashed curves in the right panel to $q_{12}=q_{21}=10$.} \label{fig:val_q_conv}
	\end{center}
\end{figure}

\begin{rem}
	If we divide the switching rates by $\eps$ and let $\eps\downarrow 0$ we have a slow-fast dynamics where the switching happens on a much shorter time scale. One can show that the dynamics without harvesting \eqref{e.1} will converge \citep{HL20} to \beq{e:mix} d \bar \xi(t)=\bar b(\bar \xi(t))
	dt+\bar \sigma(\xi(t)) d w(t),\eeq
	where
	\begin{equation}\label{e:fbar}
		\bar b(\bx) = \sum_{k=1}^{m_0} b(k)\nu_k.
	\end{equation}
	and
	\begin{equation}\label{e:gbar}
		\bar \sigma = \sqrt{\sum_{k=1}^{m_0}\sigma^2(k)\nu_k}.
	\end{equation}
	Here $(\nu_1,\dots,\nu_{m_0})$ is the stationary distribution of the Markov chain $\alpha(t)$.
	We conjecture that the optimal stocking-harvesting strategies $U^{\eps,*}$ of the system \eqref{e.2} converge as $\eps\downarrow 0$ to the optimal harvesting-stocking strategy $\bar U^*$ of the system
	\beq{e.2bar}\bar X(t)=x+\int\limits_0^t \big( \bar b(\bar X(s), \al(s)) - \bar U(s)\big) ds +  \int\limits_0^t  \sg(\bar X(s), \al(s)) dw(s).\eeq
\end{rem}

Further numerical experiments, which we omit, show that if the switching rates are such that they favour one state over the other in the stationary distribution, the value function of a simplified system with only the favoured state will be closer to the value function with switching, than the value function of a simplified system with only the unfavoured state. That is because the environment is mostly in the favored state, and its conditions are therefore more relevant to the valuation.

\subsection{Switching in the cost and price functions}
\

\noindent
We have shown how the dependence of the cost function on the harvesting rate determines whether harvesting is bang-bang.
It turns out that non-trivial price functions may also smooth out the optimal harvesting rate.
We have looked at three typical functional forms for a per-unit price-cost function $p(x, \al, u)=p_0(u)$, two of which can capture monopolistic competition, i.e., imperfect competition.
The first is a simple constant price function, $p_0(u) = p,\; p>0,$ which is appropriate when the market influence of the extraction operation is low. The second has a piecewise linear decreasing form given by
\begin{equation}
	p_0(u) =	
	\begin{cases}	
		\overline{p}, & \kappa_1 - \kappa_2 u \ge \overline{p}, \\
		(\kappa_1 - \kappa_2 u),  \quad 0 < &\kappa_1 - \kappa_2 u < \overline{p}, \\
		0, &\text{else}.
	\end{cases}
\end{equation}
The third has a constant price elasticity of demand form, and is given by 
\begin{equation}
	p_0(u) =	
	\begin{cases}	
		\overline{p},&\kappa_1 |\kappa_2 + u|^{1/\epsilon} \ge \overline{p}, \\
		\kappa_1 |\kappa_2 + u|^{1/\epsilon},  \quad 0 \le &\kappa_1 |\kappa_2 + u|^{1/\epsilon} < \overline{p}. \\
	\end{cases}
\end{equation}
The functions represent instantaneous market demand, and obey common sense assumptions.
In particular, the price decreases as the supplied quantity increases, and the price is always positive.
Both forms are typical choices in modeling market behavior.
The economic model implied by the price dependence is the following: the instantaneous harvested rate $u$ is a quantity placed in a common market, where the total supplied by all sources is what determines the instantaneous common final price $u$, and hence the returns.
\newline
\indent
The constants $\kappa_1, \kappa_2$ are positive. They capture, among other things, the contribution to total harvested quantities from different sources. In the third formulation, $\epsilon$ is negative and represents the price elasticity. The constant $\overline{p}$ is added to account for unbounded negative flow rates (e.g., unrealistically large stocking events) in the economic demand model. Large negative values of $u$ could make the overall market supply negative---something evidently not possible---so we ignore such events for practical applications. That is, we want to keep prices below $\overline{p}$ in our examples.

Figure \ref{fig:price_var_ctrl} shows, as an example, the harvesting strategy corresponding to two price functions, also known as demand functions, based on the functional forms discussed.
The left panel corresponds to linear demand, $p(u) = 1 - u/4$, and the right panel to $p(u) = (1+u/3)^{-1}$.
The second form is for a good with unit elasticity.\footnote{The \textit{(price) elasticity} of a demand function is a measure of the responsiveness of the quantity demanded to price changes, measured in an unit free manner, defined as $\epsilon = \frac{du}{dp} \cdot \frac{p}{u}$, and widely used in economics. $\epsilon = -1$ is defined as ``unit elasticity."}
The constants were chosen so that price will not be negative in the harvesting range, but are otherwise arbitrary.
Different choices of slopes and elasticity values lead to similar qualitative conclusions---with no cost, when $u p(u)$ is concave, we have a harvesting strategy that is continuous in the population level.

\begin{figure}[h!tb]
	\begin{center}
		\subfloat{{
			\includegraphics[scale=0.6]{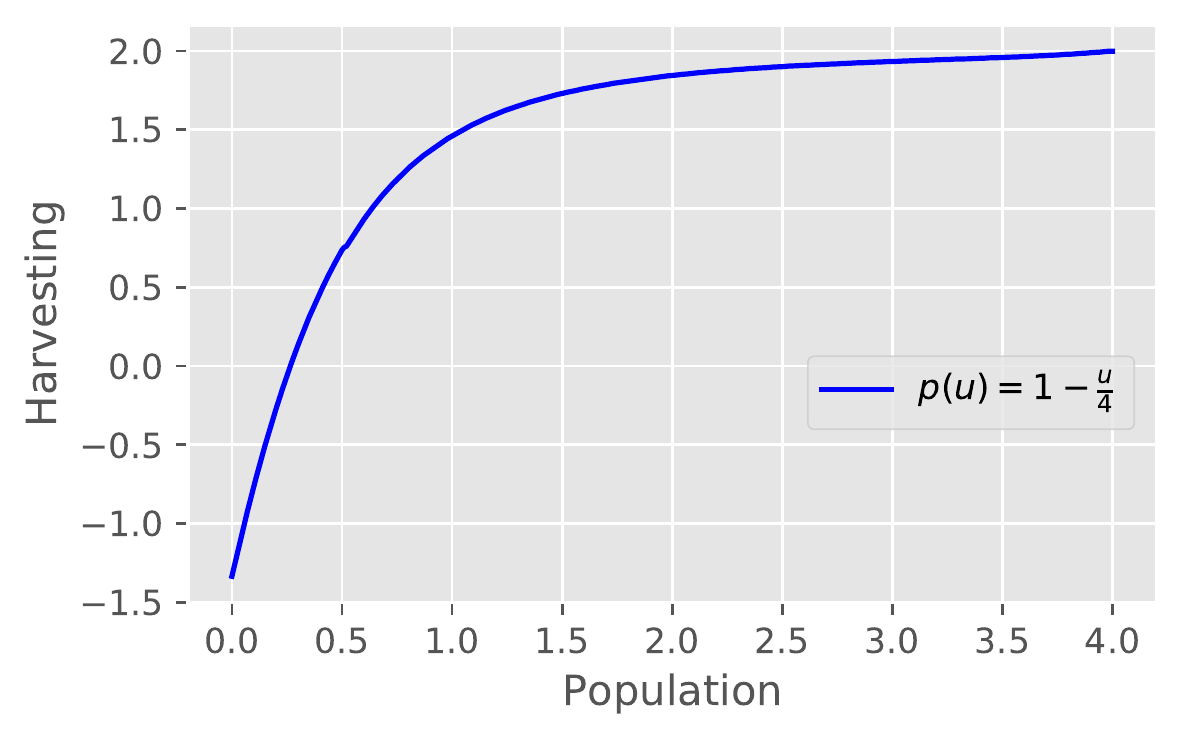}
		}}
		\subfloat{{
			\includegraphics[scale=0.6]{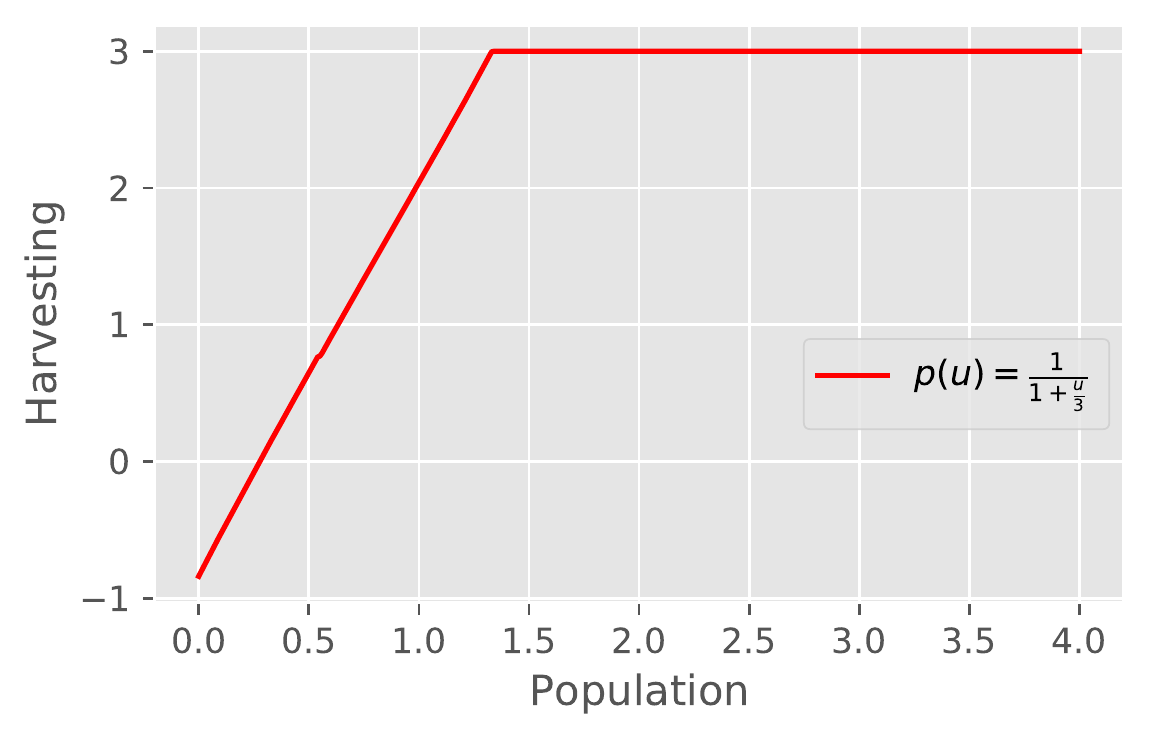}
		}}
		\caption{The left and right panels show the optimal harvesting functions for the model in Section \ref{sec:cost_num}, for two price functions, $p(u) = 1 - u/4$ on the left, and $p(u) = (1+u/3)^{-1}$ on the right. There is no dependency on $\alpha$ here, and $\mu = 2.5$. The other parameters are as before.} \label{fig:price_var_ctrl}
	\end{center}
\end{figure}

\subsection{Interlude with intuition and conjecture}\label{sec:interlude}
\

\noindent
In the previous examples, the interpretation of the control value was ``population units extracted per unit of time."
Based on marginal cost-benefit analysis and intuition, we suspect that a consequence of such a cost formulation, together with others that ensure that the value functions is increasing and concave, is that the optimal harvesting strategies are monotonically increasing in the population level (see also Figures \ref{fig:switching_1}, \ref{fig:switching_cost_1} and \ref{fig:price_var_ctrl}).

Our intuition is as follows: Let's take for granted that the value function is increasing and concave in the population variable.
Consider the cost-benefit of seeding an extra unit of population, or harvesting one less.
If the population increases, the sale value of one future unit of population is the same, but future growth is less favourable at higher population levels, because of intra-species competition.
Taking into account both, we can say that we have a decreasing marginal benefit of having an extra unit of population, as population increases.
We can also see it in the concave shape of the value function.
Its rate of increase per unit of population decreases as population increases.
Moreover, with constant effort harvesting strategies---that is, cost dependent on the absolute value of stocking or harvesting---marginal cost per unit does not depend on the population level.
The difference between marginal benefit and marginal cost is also monotonic decreasing, which means the \textit{net marginal benefit of one unit of population is decreasing in the population}.
This implies that marginally increasing the population level would make stocking less favourable or harvesting more favourable, so we have a monotonic increasing harvesting strategy.

We have not found a way to restrict our general model to provide a clear conjecture statement, mainly because our model only allows for finite rates of harvesting and stocking---which we consider more realistic.
However, there are many suggestive results in the literature that are supportive of our thinking, if we relax this assumption and allow possibly infinite stocking-harvesting rates.
\citet{song2011optimal} show in Theorem 4.4 that the value function is continuous for a model similar to our baseline model, with switching.
\cite{AS98, alvarez2000singular}, working with a Verhulst-Pearl model and a general diffusion model respectively, with no switching, constant price-cost $p$, and a possibly infinite rate of harvesting, show that the value function is continuous and increasing.
\cite{Ky18}, working with a multi-dimensional population system with harvesting and stocking, no switching, state-dependent price-cost $p$, and a possibly infinite rate of harvesting, show in Proposition 2.5 that the value function is increasing and Lipschitz continuous.
It can be easily shown that Proposition 2.5 from \cite{Ky18} can be generalised to a model with switching.


\subsection{Variable effort harvesting examples}\label{sec:var_eff}
\

\noindent
Monotonic strategies are evidently simpler to interpret, to implement, and to parametrize.
But we want to know what happens to a model if control is borne differently.
What if the cost depends on the intensity of extraction, rather than on the extracted quantity itself?
Numerical experiments, e.g., the one in Figure \ref{fig:intensity_var_ctrl}, show that the optimal harvesting strategy may not be monotonic in such cases; for example when the cost of control applies to the fractional harvesting or seeding time rate.
That is because the cost incurred through harvesting or seeding \textit{per unit of population}, per unit of time, can now decrease as the population increases---even with an increasing cost function!
With a possibly decreasing marginal cost of extraction, we may have a non-monotonic net marginal benefit.

This makes the v-shape in the right panel possible.
If the population stock is very low, while growth is favourable, the cost per unit seeded is not favourable at all.
As the population grows, seeding costs per unit go down, so the optimal strategy reflects that.
After further increases, costs stay low, but growth prospects diminish as well. Hence optimal control slowly shifts from seeding to extraction.
The adaptation of our general model for such a type of control was described in Section \ref{sec:var_eff}.
The discussion of the numerical method is detailed in Appendix \ref{sec:appendix_var_effort}.

\begin{figure}[h!tb]
	\begin{center}
		\subfloat{{
			\includegraphics[scale=0.6]{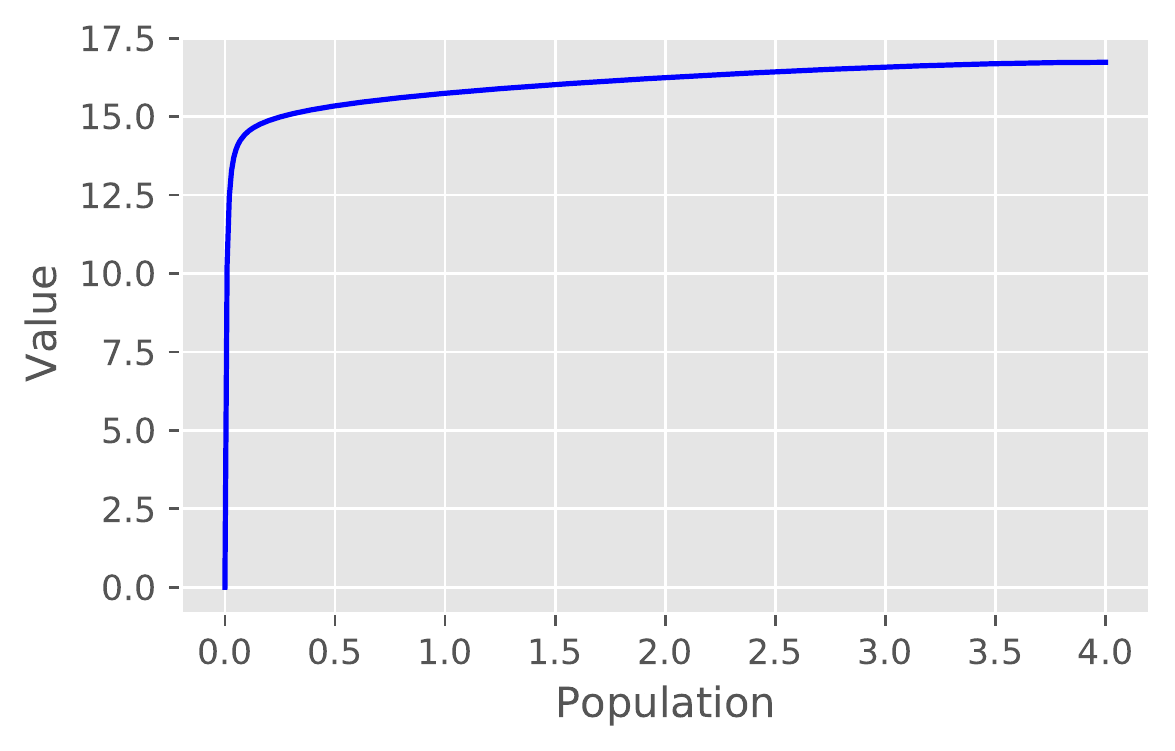}
		}}
		\subfloat{{
			\includegraphics[scale=0.6]{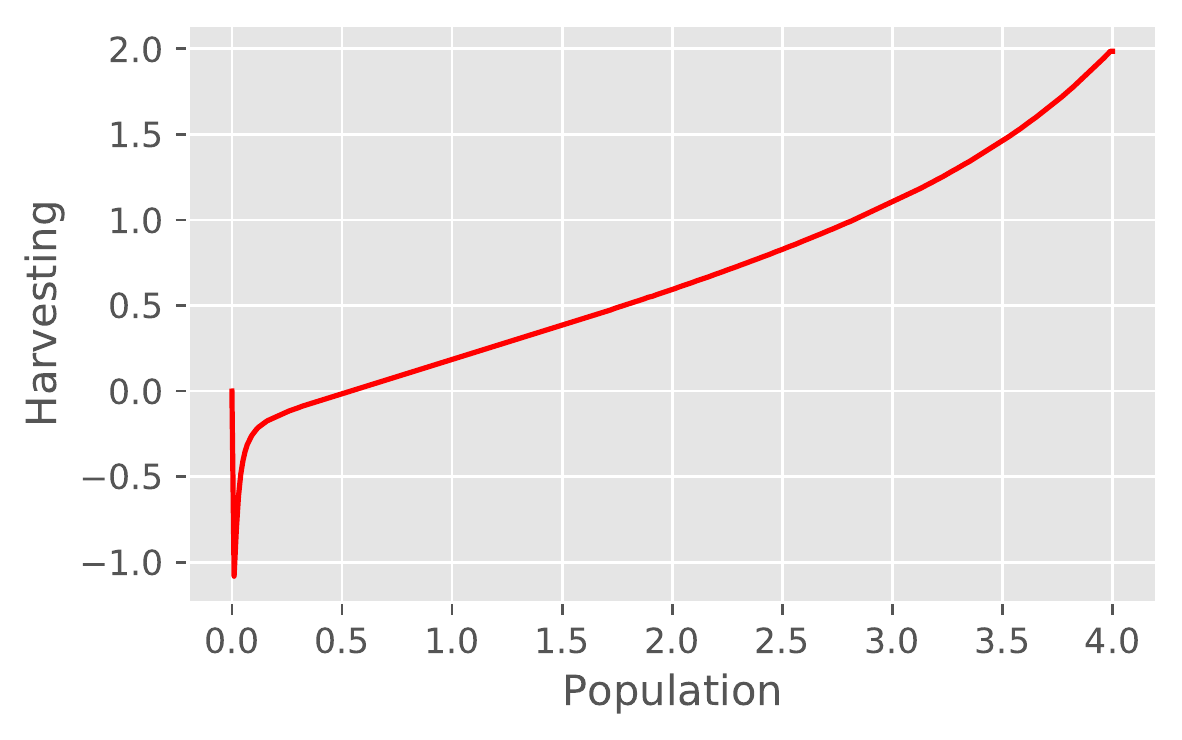}
		}}
		\caption{The left and right panels show the value function and optimal harvesting rate for a model with variable effort harvesting, and switching that affects the population growth rate. $\mu(\alpha) = 2 - \alpha/2$, $\kappa(\alpha) = 1/2$, and the cost function is $u^2$. The other parameters are as in Section \ref{sec:intro_lv}. For simplicity, we show only the value function and the harvesting strategy for $\alpha=1$. The other state has similar values and controls.} \label{fig:intensity_var_ctrl}
	\end{center}
\end{figure}

\subsection{Stochastic pricing examples}\label{sec:stoch_ex}
\

\noindent
A realistic model of population management may want to take into account that the value of the resource varies over time.
We have already considered price dependence on gross quantity extracted.
But price can also change over time because of external factors.
For example, usually economists model the price of a commodity as a stochastic variable \citep{osborne1959brownian, miltersen2003commodity}.
If this variability is large enough, we would like to understand what effects it has on our optimal population management strategy.
The following is not the standard model of price stochasticity used in finance, but it is convenient for our set-up.

Consider the price-cost function
\begin{equation*}
	p(t, x, \alpha, u) = \left( p_0(\alpha, u) + \Phi(t)\right)u - C(t,x,\alpha,u), \quad (x, \alpha)\in \mathbb{R}_+ \times \mathcal{M}, t\in \mathbb{R}^+,
\end{equation*}
where $p_0(\cdot) = 1$,  and $C(t, x, \alpha, u) = u^2/2$.
If we ignore random perturbations, the evolution of $\Phi\cd$ is given by
\begin{equation*}
	d\Phi(t)=\Phi(t)\big(0.4-\Phi(t)\big)dt, \quad \Phi(0)\in [0, 0.4).
\end{equation*}
It can be seen that $\Phi(t)\in (0, 0.4)$ for any $t\ge 0$ and $\Phi(t)\to 0.4$ as $t\to \infty$.
Now we suppose that the evolution of $\Phi(t)$ is subject to a white noise $w_0\cd$ and is given by
\begin{equation*}
	d\Phi(t) = \Phi(t)\big(0.4-\Phi(t)\big)dt + 0.5\Phi(t)\big(0.4-\Phi(t)\big)dw_0(t).
\end{equation*}
We also have $\Phi(t)\in [0, 0.4)$ with probability one.

\begin{figure}[h!tb]
	\begin{center}		
		\subfloat{{\includegraphics[scale=0.6]{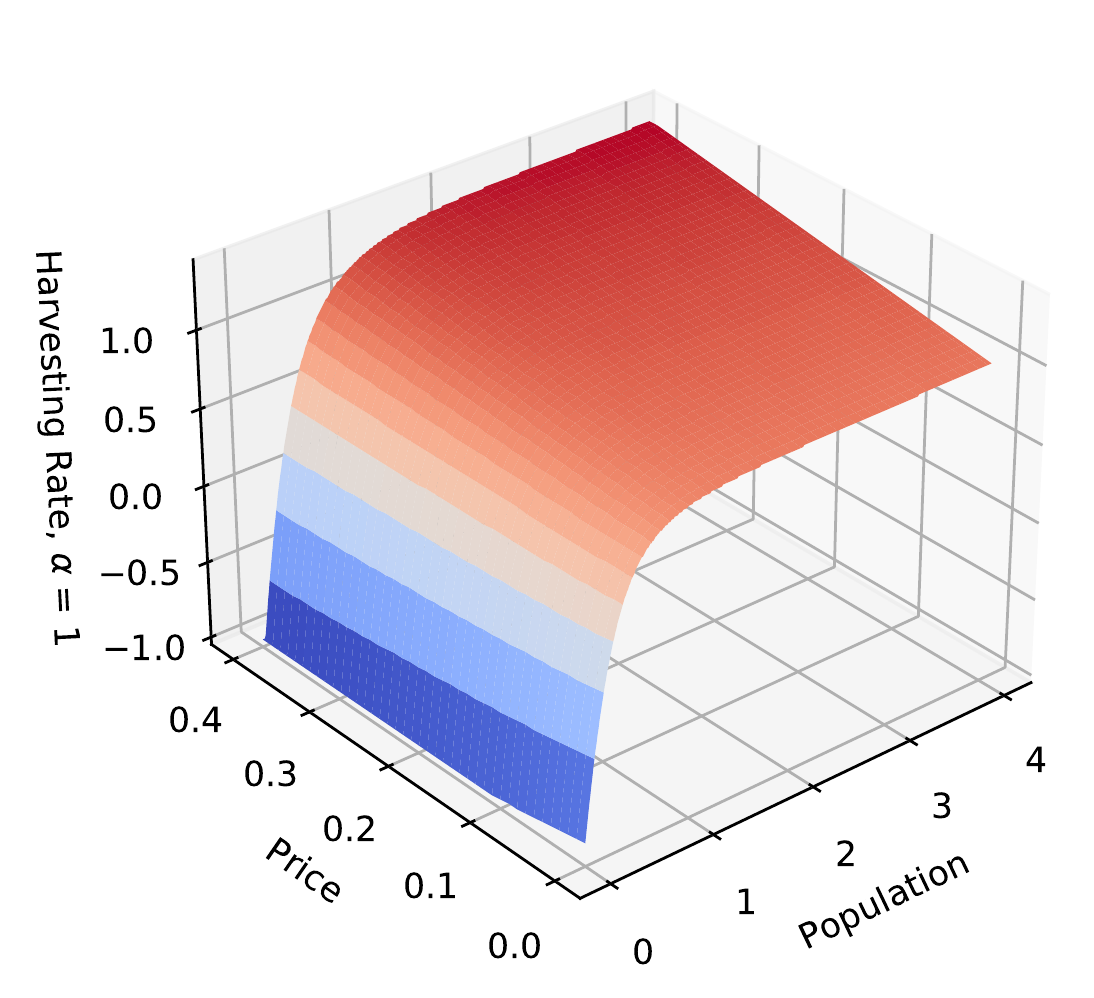} }}
		\subfloat{{\includegraphics[scale=0.5]{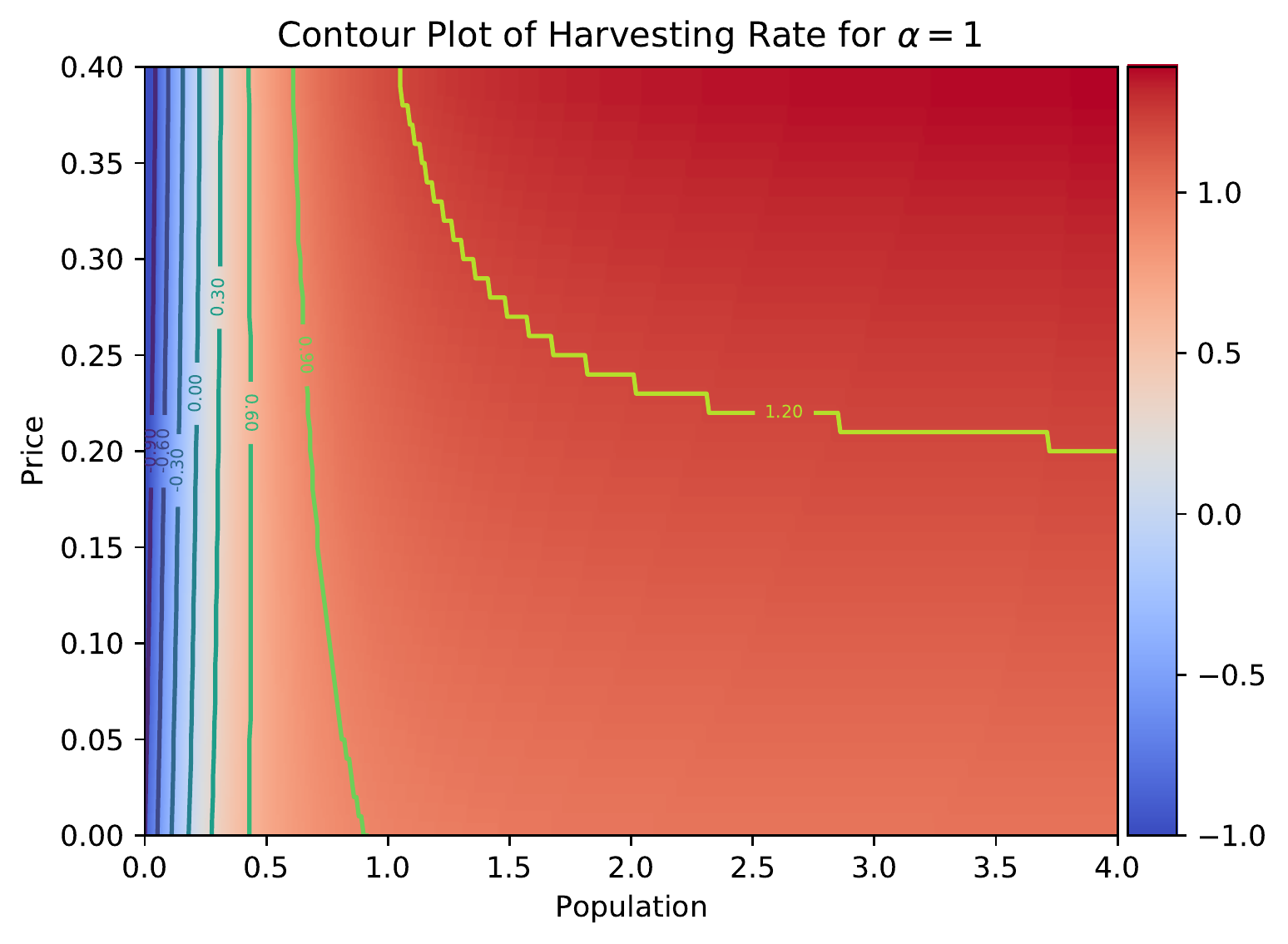} }}%
		\caption{Harvesting rate in regime $\alpha=1$, 3D plot on the left, contour plot on the right. Stochastic price given by $p_0(\alpha, u) + \Phi(t)$, cost by $C(u) = u^2/2$, and the other parameters as in the baseline model. Discussion in Section \ref{sec:stoch_ex}.}
		\label{fig:stoch_p}
	\end{center}
\end{figure}	

The harvesting policies for the two regimes share the same shape, so Figure \ref{fig:stoch_p} shows the harvesting rate as a function of population size $x$ and the observation of the price $\Phi(\cdot)$, for one of the regimes, $\alpha=1$, for brevity.
The value function has the usual properties, and we omit its graph.
It is increasing and concave in population for all price cross-sections in the range.
Moreover, fixing the population variable, we see that higher prices are more conducive to both harvesting and seeding, as expected.

\subsection{Seasonal and periodic variability}\label{sec:period_ex}
\

\noindent
Finally, in this subsection, we consider applications of our method to models with periodic parameters.
Here, periodicity can be thought of as representing seasonal changes in the environment.
In this example, we consider a simple adaptation of the Verhulst model, where growth is additively affected by a sine wave.
There is little justification for this functional form, except to point out that our sine wave would be one of the two likely lowest order terms in the Fourier expansion of any periodic dependency we may conceive.

Suppose $b(t, x, \alpha) = x\left( 4 - \alpha + \sin (2\pi t) -2x \right)$, $\sigma(t, x, \alpha) = x$, $P(t, x, \alpha, u) = 1$, and $ C(t, x, \alpha, u) = u^2/2 $, where $(t, x, \alpha, u)\in \mathbb{R}^2_+\times \{1, 2\}\times \mathcal{U}$.
To help with numerical estimation, we reduce the population range, the control range and the sampling density.
Now $\mathcal{U}=\{u :u = k/20, k \in \mathbb{Z}, -20 \le k \le 40\}$.
The time step $h_1$ is $T/4000$.
The other parameters are as before.
Note that $b(\cdot, x, \alpha)$, $\sigma(\cdot, x, \alpha)$,  and $p(\cdot, x, \alpha, u)$ are periodic with period $T=1$.

\begin{figure}[h!tb]
	\centering		
	\subfloat{{\includegraphics[scale=0.6]{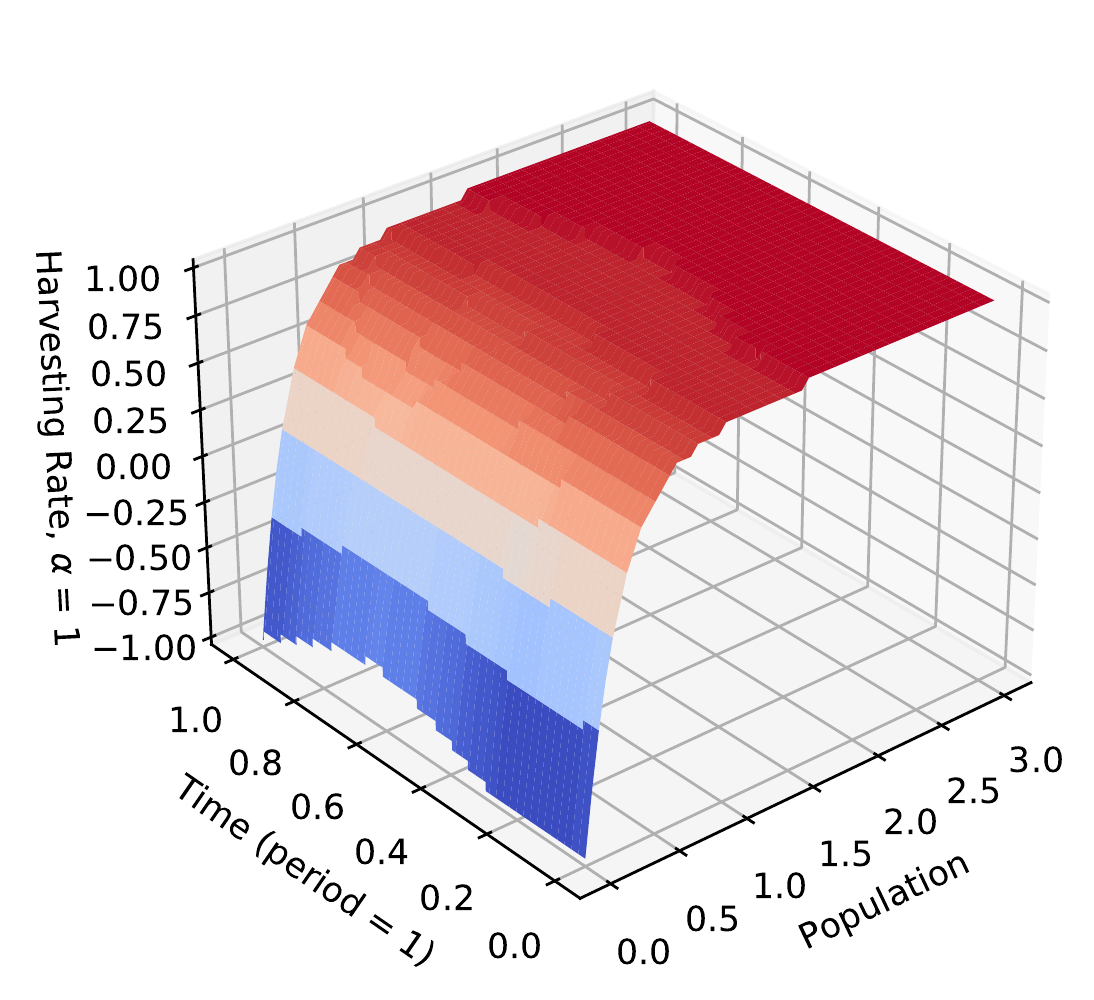} }}%
	\subfloat{{\includegraphics[scale=0.5]{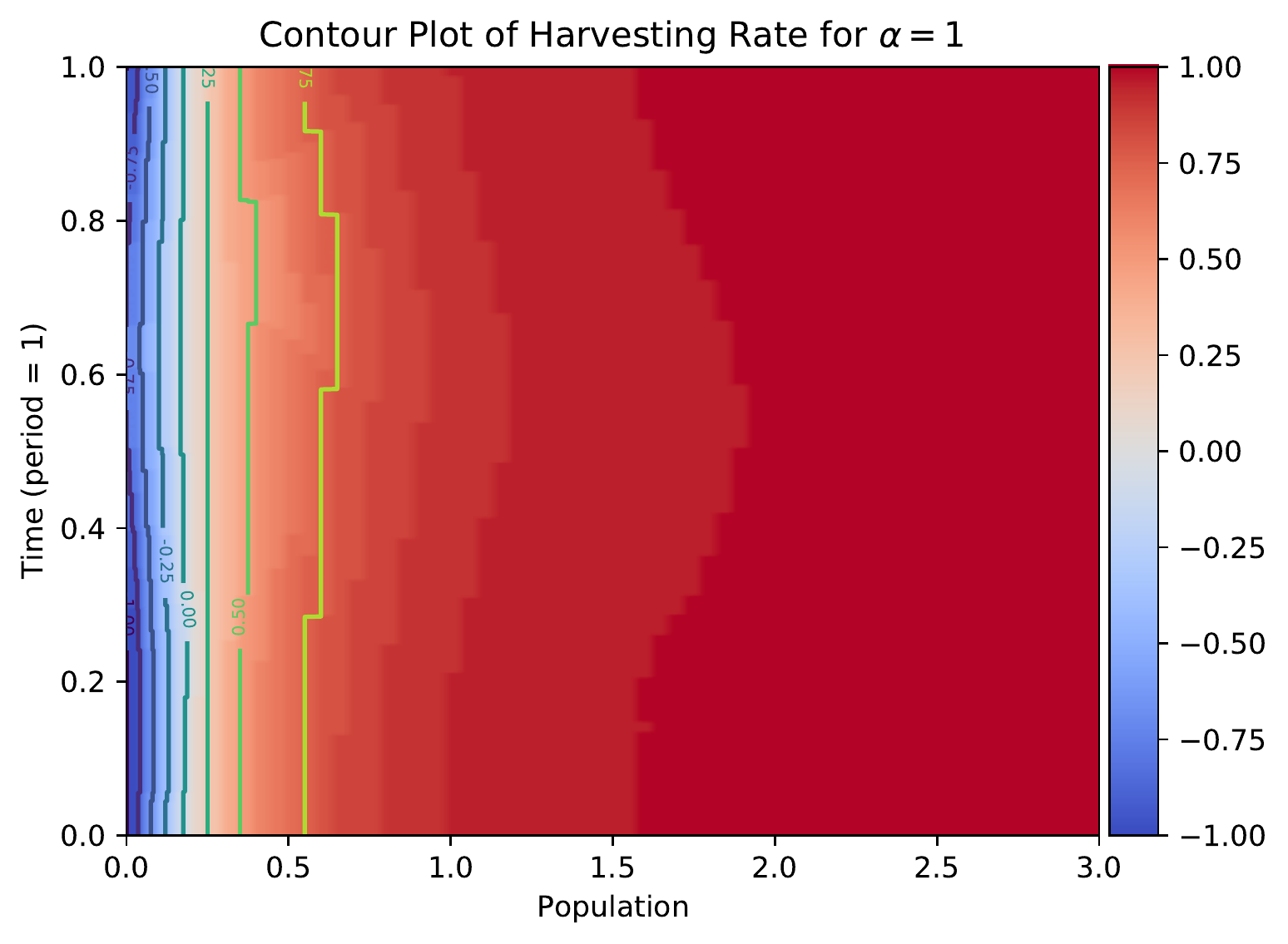} }}%
	\caption{Harvesting rate in regime $\alpha=1$, 3D plot on the left, contour plot on the right. This corresponds to an adapted Verhulst model, with drift affected by a sine wave, $b(t, x, a) = x (4 - \alpha - \sin(2 \pi t) - 2x)$, $\sigma(t,x,\alpha) = x$, constant pricing and quadratic cost, $C(u) = u^2/2$. Discussion in subsection \ref{sec:period_ex}.}
	\label{fig:periodicity}
\end{figure}	

Figure \ref{fig:periodicity} shows the estimation results for the harvesting rate as a function of population size $x$ and the time $t$.
As we can see the periodicity of the growth rate is apparent in the graphs.
It is interesting that the maximal and minimal effects on the harvesting rate do not match the maximum and minimum of the sine wave at $t=1/4$ and $t=3/4$.
Not only is the effect of harvesting out of phase with the sine wave, the effects on harvesting and stocking seem to be out of phase with each other.
At this point, we are not sure if this observation is an artefact of the limited sampling in the numerical estimation, or a robust result.
We omit the value function and the harvesting regime for $\alpha = 2$ for brevity.

\begin{figure}[h!tb]
	\centering		
	\subfloat{{\includegraphics[scale=0.6]{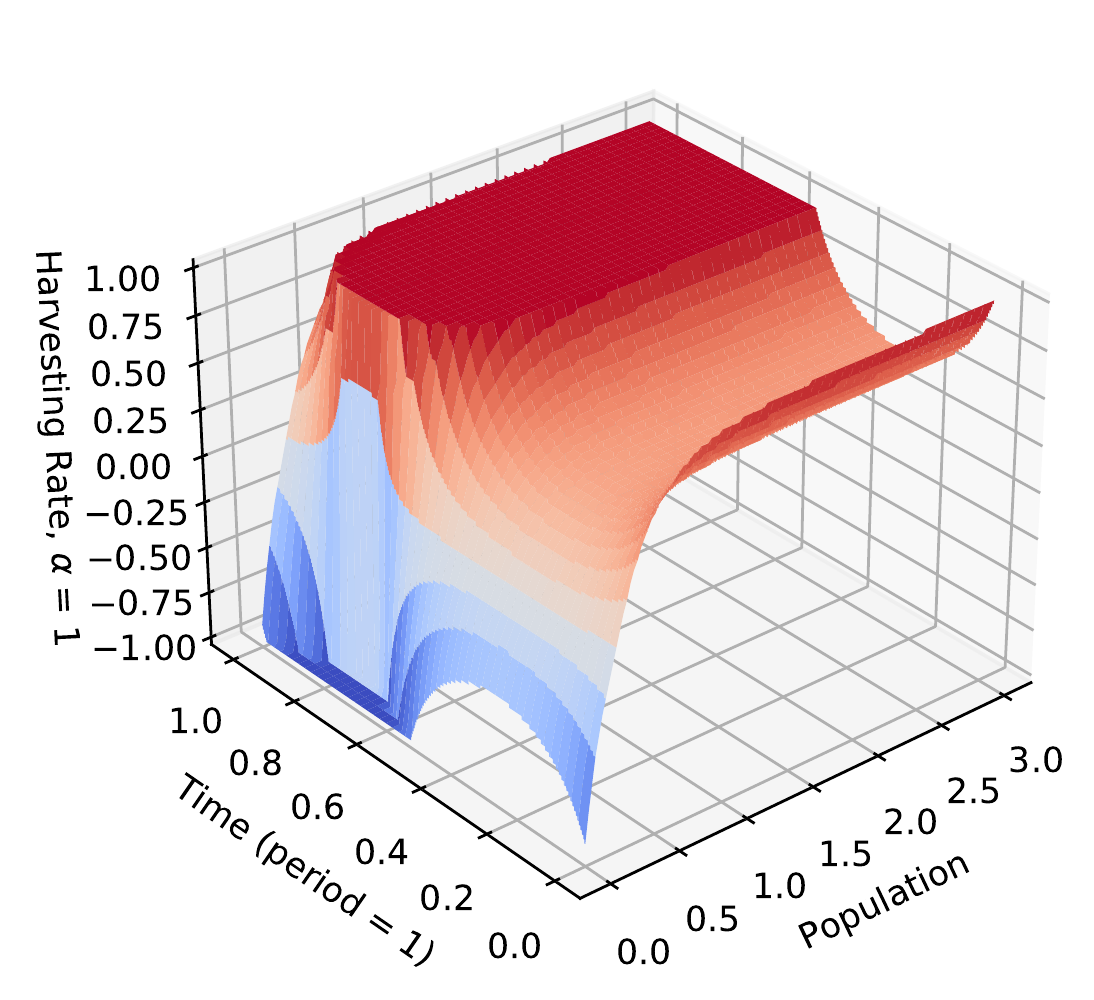} }}%
	\subfloat{{\includegraphics[scale=0.5]{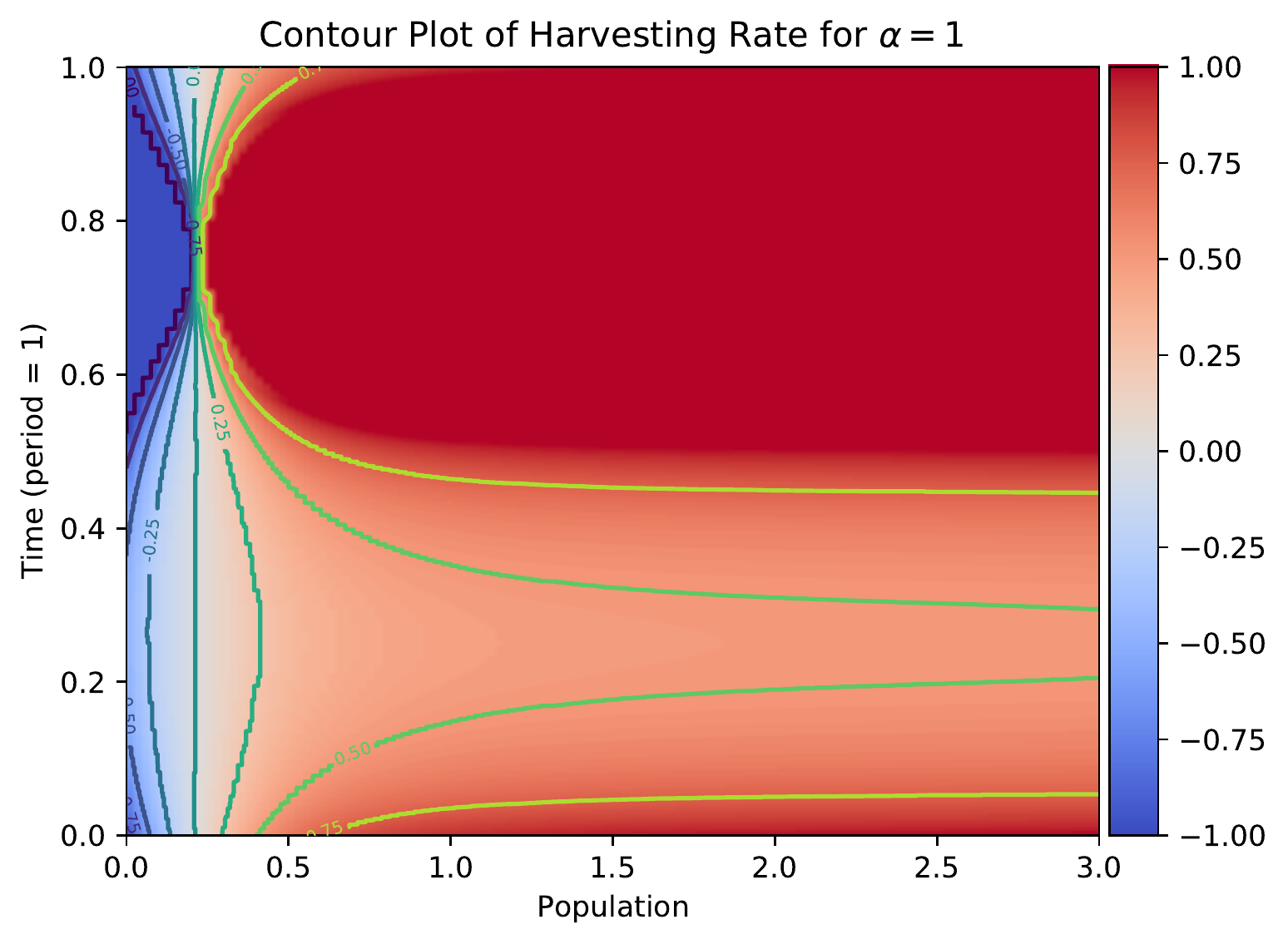} }}%
	\caption{Harvesting rate in regime $\alpha=1$, 3D plot on the left, contour plot on the right. The parameters correspond to the baseline Verhulst model, $b(t, x, a) = x (4 - \alpha - 2x)$, $\sigma(t,x,\alpha) = x$, price is constant, and the quadratic cost is affected by a sine wave, $C(t,u) = (1 + \sin(2 \pi t)) u^2/2$. $h_1 = T/16000$, and $\mathcal U \in [-1,-1]$. Discussion in subsection \ref{sec:period_ex}.}
	\label{fig:cost_periodicity}
\end{figure}	

Seasonality can affect other parameters as well.
In the following example, we have seasonality affecting costs, but not the growth of the population.
Figure \ref{fig:cost_periodicity} shows what are intuitive results.
Harvesting when the cost is high is low, and vice-versa.
Again, we omit the value function and the harvesting regime for $\alpha = 2$ for brevity.

\section{Discussion}\label{sec:disc}

The objective of this paper was to develop a theoretically sound grounding for numerical methods, that would allow us to analyze realistic, complex, stochastic population models.
Furthermore, we wanted to use these methods to explore the effects of adding features or extensions to classic models like Verhulst-Pearl, Gompertz and Nisbet.
Among the extensions of interest that we explored are Markovian environmental switching, periodicity, and non-trivial harvesting and stocking cost and price dependencies.
These are very important in any realistic model of population management.

We showed that the optimal harvesting and stocking strategy is not always of bang-bang type---that is, all or nothing.
The crucial factor seems to be whether the dependence of the cost function is convex in the harvested time rate or not.
A similar observation applies to price dependency.
Regarding environmental switching, we determined that, in the limit of switching rates favouring one state versus another, the value function and harvesting rate converge as expected to the ones of the favoured state.
Moreover, in the limit of fast switching, the value function and harvesting rate converge to the one representing an ``average environment," as is expected from the slow-fast dynamics analysis of stochastic systems without harvesting \citep{YZ09, HL20, DHNY21}.

In general, we have monotonicity of the value function in the population size, but also of the harvesting rate.
The harvesting rate is increasing in the population size, which is intuitively expected, as a higher population should be more suitable for harvesting, everything else being equal.
However, this observation breaks down when the cost of harvesting depends on the proportion harvested from the available population per unit of time, as in the so-called variable effort strategies.

Seasonal variation in the model parameters also has implications that are expected or intuitive.
The maximum harvesting or stocking values correspond to times of maximal fertility.\footnote{Observe that we have not considered models with a distinction between young and old.
All individuals are the same at all points in time, and have the same harvested value.}
In the slow interval of the season, both harvesting and stocking are reduced.

Allowing the harvesting strategy to depend on a stochastic price variable leads to a variable strategy where harvesting is favoured by high prices, and stocking is favoured by low prices, as expected.

Overall, we think that proper numerical techniques are essential to test model validity by numerical experiments, especially when the complexity of the model does not favour finding explicit solutions.
Moreover, such numerical methods are good at finding points of interest in parameter space, and for formulating conjectures.
\\

{\bf Acknowledgments.}  A. Hening is supported by the NSF through the grant DMS 1853463. K. Q. Tran is supported by the National Research Foundation of Korea grant funded by the Korea Government (MIST) NRF-2021R1F1A1062361.

\clearpage
\bibliographystyle{agsm}
\bibliography{harvest}


\appendix

\section{Transition Probabilities} \label{sec:ap_a}

\subsection{The formulation from Section \ref{sec:for}}
\

\noindent
We first look at the details we need for the setting from Section \ref{sec:for}.
With the notation defined in Section \ref{sec:num},  let $(x, \al, u)\in S_h\times \M\times \mathcal{U}$ and
denote by $\E^{h, u}_{x, \al, n}$, $\Cov^{h, u}_{x, \al, n}$ the conditional expectation and covariance given by
$$\{X_m^h, \al^h_m, U_m^h, m\le n, X_n^h=x, \al^h_n=\al, U^h_n=u\},$$
respectively.  Define $\Delta X^h_n = X^h_{n+1}-X^h_n$.
Our objective in this subsection is to define transition probabilities $q^h ((x,k), (y, l) | u)$ so that the controlled Markov chain $\{(X^h_n, \al^h_n)\}$ is locally consistent with respect to the controlled diffusion \eqref{e.2}. By this we mean that the following conditions hold:
\beq{a.1}
\barray
\aad \E^{h, u}_{x, k, n}\Delta X_n^h = ({b}(x, k) - u) \Delta t^h(x, k, u) + o(\Delta t^h(x, k, u)),\\
\aad \Var^{h, u}_{x, k, n}\Delta X_n^h = \sg^2(x, k, u)\Delta t^h(x, k, u) + o(\Delta t^h(x, k, u)),\\
\aad \P^{h, u}_{x, k, n}(\al^h_{n+1}=l)= q_{kl}\Delta t^h(x, k, u)+o(\Delta t^h(x, k, u)) \quad \text{for} \quad l\ne k,\\
\aad \P^{h, u}_{x, k, n}(\al^h_{n+1}=k)=1+ q_{kk}\Delta t^h(x, k, u)+o(\Delta t^h(x, k, u)),\\
\aad \sup\limits_{n, \ \omega} |\Delta X_n^h| \to 0 \quad \text{as}\quad h \to 0.
\earray
\eeq
Using the procedure developed by \cite{Kushner90},
for $(x, \al)\in S_h\times \M$ and $u\in \mathcal{U}$,  we define
\beq{a.2}
\barray
\aad Q_h (x, k, u)= \sg^2(x, k)+h |b(x, k)-u| -h^2q_{kk}+h,\\
\aad q^h \((x, k), (x+h, k) |u\) =\dfrac{
\sg^2(x, k)/2+\big(b(x, k)-u\big)^+ h }{Q_h (x, k, u)}, \\
\aad q^h \((x, k), (x-h, k) | u\) =\dfrac{
\sg^2(x, k)/2+\(b(x, k)-u\)^- h}{Q_h (x, k, u)}, \\
\aad   q^h \( (x, k), (x, l) | u\) =\dfrac{h^2 q_{kl} }{ Q_h (x, k, u)} \quad \text{ for }\quad k\ne l, \\
\aad q^h \( (x, k), (x, k) | u\) =\dfrac{h  }{ Q_h (x, k, u)},\quad \Delta t^h (x, k, u)=\dfrac{h^2}{Q_h(x, k, u)},
\earray
\eeq
where for a real number $r$,  $r^+=\max\{r, 0\}$,
$r^-=-\min\{0, r\}$.
Set $q^h \((x, k), (y, l)|u\)=0$ for all unlisted values of $(y, l)\in S_h\times \M$.
Note that  $ \sup_{x, k, u} \Delta t^h (x, k, u)\to 0$ as $h\to 0$.
 Using the above transition probabilities,
 we can check that the locally consistent conditions of $\{(X^h_n, \al^h_n)\}$ are satisfied.

\begin{lem}
	The Markov chain $\{(X^h_n, \al^h_n)\}$ with transition probabilities $\{q^h\cd\}$ defined in \eqref{a.2} satisfies the local consistence in \eqref{a.1}.
\end{lem}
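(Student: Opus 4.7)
The plan is to verify the five local consistency conditions of \eqref{a.1} by direct computation from the explicit formulas \eqref{a.2}, exploiting the fact that the normalizer $Q_h(x,k,u)$ has been designed precisely so that the moments of $\Delta X_n^h$ and the one-step switching probabilities match the SSDE \eqref{e.2}.

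First I would verify that \eqref{a.2} actually defines a probability distribution on $S_h\times \M$. Summing the five listed values, the numerator becomes
\[
\sigma^2(x,k)/2 + (b(x,k)-u)^+ h + \sigma^2(x,k)/2 + (b(x,k)-u)^- h + h^2\sum_{l\ne k} q_{kl} + h,
\]
which equals $\sigma^2(x,k) + h|b(x,k)-u| - h^2 q_{kk} + h = Q_h(x,k,u)$, so the quotients sum to $1$. Nonnegativity of each term is immediate. Next, observe from the definition that $|\Delta X_n^h|\le h$ always (the chain moves by at most one grid step in the $x$-coordinate), which directly gives the last condition $\sup_{n,\omega}|\Delta X_n^h|\to 0$ as $h\to 0$.

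For the first-moment condition, the only transitions that change $x$ are the two moves by $\pm h$, hence
\[
\E^{h,u}_{x,k,n}\Delta X_n^h \;=\; h\cdot q^h((x,k),(x{+}h,k)\,|\,u) - h\cdot q^h((x,k),(x{-}h,k)\,|\,u) \;=\; \frac{h^2\bigl[(b-u)^+ - (b-u)^-\bigr]}{Q_h(x,k,u)} \;=\; (b(x,k)-u)\,\Delta t^h(x,k,u),
\]
which is an exact identity (the $o$-term vanishes). For the second moment, the same two transitions contribute $h^2$ each, so
\[
\E^{h,u}_{x,k,n}(\Delta X_n^h)^2 \;=\; h^2\cdot\frac{\sigma^2(x,k) + h|b(x,k)-u|}{Q_h(x,k,u)} \;=\; \sigma^2(x,k)\,\Delta t^h(x,k,u) + h|b(x,k)-u|\,\Delta t^h(x,k,u).
\]
Subtracting the square of the first moment, which equals $(b-u)^2(\Delta t^h)^2$, and noting that both $h|b-u|\Delta t^h$ and $(\Delta t^h)^2$ are $o(\Delta t^h)$ (since $\Delta t^h\to 0$ uniformly as $h\to 0$, using boundedness of $\sigma$ and $b-u$ on the relevant range), we obtain $\Var^{h,u}_{x,k,n}\Delta X_n^h = \sigma^2(x,k)\Delta t^h + o(\Delta t^h)$.

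Finally, the switching probabilities follow directly: for $l\ne k$ the chain can move to state $l$ only via the transition $(x,k)\to(x,l)$, giving
\[
\P^{h,u}_{x,k,n}(\alpha^h_{n+1}=l) \;=\; \frac{h^2 q_{kl}}{Q_h(x,k,u)} \;=\; q_{kl}\,\Delta t^h(x,k,u),
\]
again exactly. The probability of staying in regime $k$ is the sum of the three transitions that preserve the second coordinate, whose numerators add to $\sigma^2(x,k)+h|b(x,k)-u|+h = Q_h(x,k,u) + h^2 q_{kk}$, so
\[
\P^{h,u}_{x,k,n}(\alpha^h_{n+1}=k) \;=\; 1 + \frac{h^2 q_{kk}}{Q_h(x,k,u)} \;=\; 1 + q_{kk}\,\Delta t^h(x,k,u).
\]
This completes the verification of all five conditions in \eqref{a.1}. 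There is no real obstacle here; the only point requiring care is bookkeeping in the variance calculation and confirming that the probabilities sum to one, both of which reduce to rewriting $Q_h$ as designed.
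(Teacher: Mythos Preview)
Your proposal is correct and is precisely the direct verification the paper has in mind; the paper itself omits the proof, simply asserting that one ``can check'' the local consistency conditions from the transition probabilities. Your computations for the first moment, variance, switching probabilities, and the step-size bound are all accurate (and in fact several of the identities hold exactly, not merely up to $o(\Delta t^h)$).
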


\subsection{Variable effort harvesting-stocking strategies}\label{sec:appendix_var_effort}
\

\noindent
For $(x, \al, u)\in S_h\times \M\times \mathcal{U}$, let $\E^{h, u}_{x, \al, n}$, $\Cov^{h, u}_{x, \al, n}$ denote the conditional expectation and covariance given by
$$\{X_m^h, \al^h_m, U_m^{ h}, m\le n, X_n^h=x, \al^h_n=\al, U^{h}_n=u\},$$
respectively. Define $\Delta X^h_n = X^h_{n+1}-X^h_n$. In order to approximate the process $(X\cd, \al\cd)$ given in \eqref{f.1},
 the controlled Markov chain $\{(X^h_n, \al^h_n)\}$ must be locally consistent with respect to $(X\cd, \al\cd)$ in the sense that the following conditions hold
\beq{aa.1}
\barray
\aad \E^{h, u}_{x, k, n}\Delta X_n^h = ({b}(x, k) - ux) \Delta t^h(x, k, u) + o(\Delta t^h(x, k, u)),\\
\aad \Var^{h, u}_{x, k, n}\Delta X_n^h = \sg^2(x, k, u)\Delta t^h(x, k, u) + o(\Delta t^h(x, k, u)),\\
\aad \P^{h, u}_{x, k, n}(\al^h_{n+1}=l)= q_{kl}\Delta t^h(x, k, u)+o(\Delta t^h(x, k, u)) \quad \text{for} \, l\ne k,\\
\aad \P^{h, u}_{x, k, n}(\al^h_{n+1}=k)=1+ q_{kk}\Delta t^h(x, k, u)+o(\Delta t^h(x, k, u)),\\
\aad \sup\limits_{n, \ \omega} |\Delta X_n^h| \to 0 \quad \text{as}\quad h \to 0.
\earray
\eeq
To this end, we define the
 transition probabilities $q^h ((x,k), (y, l) | u)$ as follows. For
$(x, k)\in S_h\times \M$ and $u\in \mathcal{U}$, let
\beq{aa.2}
\barray
\aad Q_h (x, k, u)= \sg^2(x, k)+h |b(x, k)-ux| -h^2q_{kk}+h,\\
\aad q^h \((x, k), (x+h, k) |u\) =\dfrac{
\sg^2(x, k)/2+\big(b(x, k)-ux)^+ h }{Q_h (x, k, u)}, \\
\aad q^h \((x, k), (x-h, k) | u\) =\dfrac{
\sg^2(x, k)/2+\(b(x, k)-ux\)^- h}{Q_h (x, k, u)}, \\
\aad   q^h \( (x, k), (x, l) | u\) =\dfrac{h^2 q_{kl} }{ Q_h (x, k, u)} \quad \text{ for }\quad k\ne l, \\
\aad q^h \( (x, k), (x, k) | u\) =\dfrac{h  }{ Q_h (x, k, u)},\quad \Delta t^h (x, k, u)=\dfrac{h^2}{Q_h(x, k, u)}.
\earray
\eeq
Set $q^h \((x, k), (y, l)|u\)=0$ for all unlisted values of $(y, l)\in S_h\times \M$.

\subsection{Uncertain price functions}
\

\noindent
Let $(\phi, x, \al, u)\in \wdh S_h\times \M\times \mathcal{U}$ and denote by $\E^{h, u}_{\phi, x, \al, n}$, $\Cov^{h, u}_{\phi, x, \al, n}$ the conditional expectation and covariance given by
$$\{\Phi^h_m, X_m^h, \al^h_m, U_m^{h}, m\le n, \Phi^h_n=\phi, X_n^h=x, \al^h_n=\al, U^{h}_n=u\},$$
respectively. Define $\Delta X^h_n = X^h_{n+1}-X^h_n$ and $\Delta \Phi^h_n = \Phi^h_{n+1}-\Phi^h_n$.
In order to approximate $(\Phi\cd, X\cd, \al\cd)$ given by \eqref{e.2}-and-\eqref{g.1},
 the controlled Markov chain $\{(\Phi^h_n, X^h_n, \al^h_n)\}$ must be locally consistent with respect to $(\Phi\cd, X\cd, \al\cd)$
in the sense that the following conditions hold:
\beq{b.1}
\barray
\aad \E^{h, u}_{\phi, x, k, n}\Delta X_n^h = ({b}(x, k) - u) \Delta t^h(\phi, x, k, u) + o(\Delta t^h(\phi, x, k, u)),\\
\aad \Var^{h, u}_{\phi, x, k, n}\Delta X_n^h = \sg^2(x, k, u)\Delta t^h(\phi, x, k, u) + o(\Delta t^h(\phi, x, k, u)),\\
\aad \E^{h, u}_{\phi, x, k, n}\Delta \Phi_n^h = {b}_0(x, k) \Delta t^h(\phi, x, k, u) + o(\Delta t^h(\phi, x, k, u)),\\
\aad \Var^{h, u}_{\phi, x, k, n}\Delta \Phi_n^h = \sg^2_0(x, k)\Delta t^h(\phi, x, k, u) + o(\Delta t^h(\phi, x, k, u)),\\

\aad \P^{h, u}_{\phi, x, k, n}(\al^h_{n+1}=l)= q_{kl}\Delta t^h(\phi, x, k, u)+o(\Delta t^h(\phi, x, k, u)) \quad \text{for} \quad l\ne k,\\
\aad \P^{h, u}_{\phi, x, k, n}(\al^h_{n+1}=k)=1+ q_{kk}\Delta t^h(\phi, x, k, u)+o(\Delta t^h(\phi, x, k, u)),\\
\aad \sup\limits_{n, \ \omega} \big( |\Delta X_n^h|  + |\Delta \Phi^h_n|\big)\to 0 \quad \text{as}\quad h \to 0.
\earray
\eeq
To this end, we define the  transition probabilities $q^h ((\phi, x,k), (\psi, y, l) | u)$ as follows. For
$(\phi, x, k)\in \wdh S_h\times \M$ and $u\in \mathcal{U}$,  let
\beq{b.2}
\barray
\aad Q_h (\phi, x, k, u)=\sg^2(x, k) +h |b(x, k)-u| +\sg_0^2 (x, k)  +h |b_0 (x, k)|-h^2q_{kk}+h,\\
\aad q^h \((\phi, x, k), (\phi, x+h, k) |u\) =
\dfrac{\sg^2(x, k)/2+\big(b(x, k)-u\big)^+ h }{Q_h (\phi, x, k, u)}, \\
\aad q^h \((\phi, x, k), (\phi, x-h,  k) | u\) =
\dfrac{\sg^2(x, k)/2+\(b(x, k)-u\)^- h}{Q_h (\phi, x, k, u)}, \\
\aad   q^h \( (\phi, x, k), (\phi, x, l) | u\) =\dfrac{h^2 q_{kl} }{ Q_h (\phi, x, k, u)} \quad \text{for} \quad k\ne l,\\
\aad q^h \( (\phi, x, k), (\phi, x, k) | u\) =\dfrac{h  }{ Q_h (\phi, x, k, u)}, \quad \Delta t^h (\phi, x, k, u)=\dfrac{h^2}{Q_h(\phi, x, k, u)},\\
\aad q^h \((\phi, x, k), (\phi+h, x, k) |u\) =
\dfrac{ \sg^2_0 (x, k)/2  +h  b^+_0 (x, k)}{Q_h (\phi, x, k, u)},\\
\aad q^h \((\phi, x, k), (\phi-h, x, k) |u\) =
\dfrac{\sg_0^2 (x, k)/2  +h  b^-_0 (x, k) }{Q_h (\phi, x, k, u)}.
\earray
\eeq
Set $q^h \((\phi, x, k), (\psi, y, l)|u\)=0$ for all unlisted values of $(\psi, y, l)\in \wdh S_h\times \M$.

\subsection{The combined effects of seasonality and Markovian switching}
\

\noindent
Recall that $\wdt S_{h}: = \{(\ga, x)=(k_1 h_1, k_2 h_2)'\in \rr^2: k_i\in \mathbb{Z}_{\ge 0}, k_1\le T/h_1\}.$
 Let $(\ga, x, \al, u)\in \wdt S_h\times \M\times \mathcal{U}$ and denote by $\E^{h, u}_{\ga, x, \al, n}$, $\Cov^{h, u}_{\ga, x, \al, n}$ the conditional expectation and covariance given by
$$\{\Ga^h_m, X_m^h, \al^h_m, U_m^{h}, m\le n, \Ga^h_n=\ga, X_n^h=x, \al^h_n=\al, U^{h}_n=u\},$$
respectively. Define $\Delta X^h_n = X^h_{n+1}-X^h_n$ and $\Delta \Ga^h_n = \Ga^h_{n+1}-\Ga^h_n$.
In order to approximate $(\Ga\cd, X\cd, \al\cd)$ given by \eqref{h.1},  the controlled Markov chain $\{(\Ga^h_n, X^h_n, \al^h_n)\}$ must be locally consistent with respect to $(\Ga\cd, X\cd, \al\cd)$
in the sense that the following conditions hold:
\beq{bb.1}
\barray
\aad \E^{h, u}_{\ga, x, k, n}\Delta X_n^h = ({b}(\ga, x, k) - u) \Delta t^h(\ga, x, k, u) + o(\Delta t^h(\ga, x, k, u)),\\
\aad \Var^{h, u}_{\ga, x, k, n}\Delta X_n^h = \sg^2(\ga, x, k, u)\Delta t^h(\ga, x, k, u) + o(\Delta t^h(\ga, x, k, u)),\\
\aad \Delta \Ga_n^h = \Delta t^h(\ga, x, k, u),\quad \text{if} \quad \ga + \Delta t^h(\phi, x, k, u)<T,\\
\aad \Delta \Ga_n^h = \ga + \Delta t^h(\ga, x, k, u)-T \quad \text{if} \quad \ga + \Delta t^h(\ga, x, k, u)\ge T,\\
\aad \P^{h, u}_{\ga, x, k, n}(\al^h_{n+1}=l)= q_{kl}\Delta t^h(\ga, x, k, u)+o(\Delta t^h(\ga, x, k, u)) \quad \text{for } \, l\ne k,\\
\aad \P^{h, u}_{\ga, x, k, n}(\al^h_{n+1}=k)=1+ q_{kk}\Delta t^h(\ga, x, k, u)+o(\Delta t^h(\ga, x, k, u)),\\
\aad \sup\limits_{n, \ \omega} \big( |\Delta X_n^h| + \Delta \Ga^h_n \big) \to 0 \quad \text{as}\quad h \to 0.
\earray
\eeq
To this end, we define the
 transition probabilities $q^h ((\ga, x,k), (\lambda, y, l) | u)$ as follows. Let
$(\ga, x, k)\in \wdt S_h\times \M$ and $u\in \mathcal{U}$. If $\ga+h_1=T$, $\ga+h_1$ in the following definition is understood as $0$. Let
\beq{bb.2}
\barray
\ad \Delta t^h(\ga, x, k, u)=h_1,\\
\ad q^h \((\ga, x, k), (\ga + h_1, x+ h_2, k) | u\) =
\dfrac{\Big( \sg^2(\ga, x, k)/2+\big(b(\ga, x, k)-u\big)^+ h_2 \Big)h_1}{h_2^2}, \\
\ad q^h \((\ga, x, k), (\ga + h_1, x-h_2, k) | u\) =
\dfrac{\Big( \sg^2(\ga, x, k)/2+\big(b(\ga, x, k)-u\big)^- h_2 \Big)h_1}{h_2^2}, \\
\ad  q^h\big((\ga, x, k), (\ga+h_1, x, l)|u\big)=h_1q_{kl},  \quad \text{for} \quad l\ne k,\\
\ad q^h\big( (\ga, x, k), (\ga+h_1, x, k)|u \big) = 1 -  q^h\big( (\ga, x,k), (\ga+h_1, x+ h_2, k)| u \big) \\
\ad \qquad -  q^h\big( (\ga, x,k), (\ga+h_1, x- h_2, k)| u \big) - \sum\limits_{l\ne k} q^h\big( (\ga, x, k), (\ga+h_1, x, l)|u \big).\earray
\eeq
Set $q^h \((\ga, x, k), (\lambda, y, l)|u\)=0$ for all unlisted values of $(\lambda, y, l)\in \wdt S_h\times \M$.

\section{Continuous--time interpolation}
We will present the convergence analysis for the formulation in Section \ref{sec:for}. The other formulas can be handled in a similar way. Our procedure and methods are similar to those in \cite{Kushner90, Kushner92, Song06}.
The convergence result is based on a continuous-time interpolation of the controlled Markov chain,
which will be  constructed to be piecewise constant on the time interval $[t^h_n, t^h_{n+1}), n\ge 0$.
To this end, we define
$n^h(t)=\max\{n: t^h_n\le t\}, t\ge 0.$
The piecewise constant interpolation of $\{(X^h_n,\al^h_n, U^h_n)\}$, denoted by $\big(X^h(t),\al^h(t), U^h(t)\big)$ is naturally defined as
\beq{d.1}
\barray
\aad X^h(t) = X^h_{n^h(t)}, \quad \al^h(t) = \al^h_{n^h(t)}, \quad U^h(t) = U^h_{n^h(t)}, \quad t\ge 0.
\earray
\eeq
Define $\mathcal{F}^h(t)=\sigma\{X^h(s), \al^h(s), U^h(s): s\le t\}=\mathcal{F}^h_{n^h(t)}$. Also define $$M^h(0)=0, \quad M^h(t)  = \sum\limits_{m=0}^{n^h(t)-1} (\Delta X^h_m -
\E^{h}_m \Delta X_m)  \quad \text{for}\quad t\ge 0.$$
It is obvious that
\beq{d.2}
X^h(t)
= x + \sum\limits_{m=0}^{n^h(t)-1}
\E^{h}_m \Delta X^h_m + M^h(t).
\eeq
Recall that $\Delta t^h_m = h^2/Q_h(X^h_m, \al^h_m, U^h_m)$. It follows that
\beq{d.3}
\barray
\sum\limits_{m=0}^{n^h(t)-1}
\E^{h}_m \Delta X^h_m \ad = \sum\limits_{m=0}^{n^h(t)-1} \[b (X^h_m, \al^h_m)+U^h_m\]\Delta t^h_m\\
\ad=\int_0^t \[b (X^h(s), \al^h(s)) + U^h(s)\] ds-\int_{t^h_{n^h(t)}}^t \[b (X^h(s), \al^h(s))+U^h(s)\] ds\\
\ad = \int_0^t \[b (X^h(s), \al^h(s))+U^h(s)\] ds + \e^h_1(t),
\earray
\eeq
with $\{\e_1^h\cd\}$
 being an $\mathcal{F}^h(t)$-adapted process satisfying \begin{equation*}\lim\limits_{h\to 0} \sup\limits_{t\in [0, T_0]}\E|\e_1^h(t)|=0 \quad \text{for any }0<T_0<\infty.\end{equation*} For simplicity, we suppose that
$\inf\limits_{(x, k)}1/|\sg(x, k)|>0$ (if this is not the case, we can use the trick from \cite[p.288-289]{Kushner92}).
Define $w^h\cd$ by
\beq{d.3x}
w^h(t)  = \sum\limits_{m=0}^{n^h(t)-1} \big[ 1/\sg (X^h_m, \al^h_m)\big]
(\Delta X^h_m -\E^{h}_m \Delta X^h_m).
\eeq
Then we can write
\beq{d.4}
M^h(t) =\int_0^t \sg (X^h(s), \al^h(s)) dw^h(s) + \e_2^h(t),
\eeq
with $\{\e_2^h\cd\}$
 being an $\mathcal{F}^h(t)$-adapted process satisfying \begin{equation*}\lim\limits_{h\to 0} \sup\limits_{t\in [0, T_0]}\E|\e_2^h(t)|=0 \quad \text{for any }0<T_0<\infty.\end{equation*}
Using  \eqref{d.3} and \eqref{d.4}, we can write \eqref{d.2} as
\beq{d.5}
X^h(t) = x + \int_0^t \[b (X^h(s), \al^h(s))+U^h(s)\]  ds +  \int_0^t \sg(X^h(s), \al^h(s)) dw^h(s) +\e^h(t),
\eeq
where $\e^h\cd$ is an $\mathcal{F}^h(t)$-adapted process satisfying \begin{equation*}\lim\limits_{h\to 0} \sup\limits_{t\in [0, T_0]}\E|\e^h(t)|=0 \quad \text{for any }0<T_0<\infty.\end{equation*}
The performance
function from \eqref{e.7} can be rewritten as
\beq{d.6}
J^h(x, \al, U^h\cd)= \E \int_0^{\infty} e^{-\delta s} p\big(X^h(s), \al^h(s), U^h(s)\big)ds.
\eeq

\section{Convergence}

The convergence of the algorithms is established via the weak convergence method.
To proceed,  let $D[0, \infty)$ denote the space of functions that are right continuous and have left-hand limits endowed with the Skorokhod topology. All the weak analysis will be on this space or its $k$-fold products $D^k[0, \infty)$ for appropriate $k$. We follow \cite[Section 4.6]{Kushner92} in order to introduce relaxed control representations, which we need in order to prove the weak convergence.

\begin{defn}{\rm
Let $\mathcal{B}(\mathcal{U}\times [0, \infty))$ be the $\sigma$-algebra of Borel subsets of $\mathcal{U}\times [0, \infty)$. An admissible relaxed control, which we will call
a \textit{relaxed control}, $m\cd$ is a measure on $\mathcal{B}(\mathcal{U}\times [0, \infty))$ such
that $$m(\mathcal{U}\times [0, t]) = t \quad\text{for all}\quad t\ge 0.$$ Given a relaxed control $m\cd$,
there is a probability measure  $m_t\cd$ defined on the $\sigma$-algebra  $\mathcal{B} (\mathcal{U})$ such that $m(du dt)=m_t(du)dt$. Let $\mathcal{R}(\mathcal{U}\times [0, \infty))$ denote the set of all
relaxed controls on $\mathcal{U}\times [0, \infty)$.

With the given probability
space, we say that $m\cd$ is an admissible relaxed (stochastic) control if (i) for each fixed $t\ge 0$, $m(t, \cdot)$ is a random variable taking values in $\mathcal{R}(\mathcal{U}\times [0, \infty))$, and for each fixed $\omega$, $m(\cdot, \omega)$ is a deterministic relaxed control; (ii) the function defined by $m(A\times [0, t])$ is $\mathcal{F}(t)$-adapted for any $A\in \mathcal{B}(\mathcal{U})$. As a result,
with probability one, there is a measure $m_t(\cdot, \omega)$
on the Borel $\sigma$-algebra $\mathcal{B}(\mathcal{U})$ such that
$m(dcdt) = m_t(dc)dt$. }
\end{defn}

\begin{rem}\label{rem:relax} {\rm
For a sequence  of controls $U^h=\{U^h_n: n\in  \mathbb{Z}_{\ge 0}\}$, we define a sequence of relaxed control equivalence as follows.
First, we set $m_{t}^h(du)=\delta_{U^h(t)}(du)$ for $t\ge 0$, where $\delta_{U^h(t)}\cd$ is the probability measure concentrated at $U^h(t)$. Then $m^h\cd$ is defined by $m^h(dudt)=m_t(du)dt$; that is,
$$m^h(B\times [0, t])=\int_0^t \Big(\int_B\delta_{U^h(s)}(du)\Big)ds, \quad B\in \mathcal{B}({\mathcal{U}})\quad \text{and}\quad t\ge 0.$$

Recall that $\mathcal{R}(\mathcal{U}\times [0, \infty))$ is the space of all
relaxed controls on $\mathcal{U}\times [0, \infty)$.
 Then $\mathcal{R}(\mathcal{U}\times [0, \infty))$
 can be metrized using the Prokhorov metric in the usual way as in \cite[pages 263--264]{Kushner92}. With the Prokhorov metric,  $\mathcal{R}(\mathcal{U}\times [0, \infty))$
 is a compact space. It follows that any sequence of relaxed controls has a convergent subsequence.
 Moreover, a sequence $(\eta_n)_{n\in \mathbb{N}}$ with $\eta_n\in \mathcal{R}(\mathcal{U}\times [0, \infty))$ converges to $\eta\in \mathcal{R}(\mathcal{U}\times [0, \infty))$ if and only if for any continuous functions with compact support $\Psi\cd$ on $\mathcal{U}\times [0, \infty)$ one has
$$\int_{\mathcal{U}\times [0, \infty)}
\Psi(u, s)\eta_n(du, ds) \to \int_{\mathcal{U}\times [0, \infty)}
\Psi(u,s)\eta(du, ds)$$
as $n\to\infty$.
Note that for a sequence  of ordinary controls $U^h=\{U^h_n:  n\in \mathbb{Z}_{\ge 0}\}$, the associated relaxed control  ${m}^h(dcdt)$ belongs to $\mathcal{R}(\mathcal{U}\times [0, \infty))$.
 Note also that the limits of the ``relaxed
 control representations'' of the ordinary controls might not be ordinary controls, but
 only relaxed controls. }
\end{rem}

With the notion of relaxed control given above, we can write \eqref{d.5} and \eqref{d.6} as
\beq{dd.1}
X^h(t) = x + \int_0^t \[b (X^h(s), \al^h(s))+U(s)\]  m^h_s(du)ds +  \int_0^t \sg(X^h(s), \al^h(s)) dw^h(s) +\e^h(t),
\eeq
\beq{dd.2}
J^h(x, \al, m^h\cd)= \E \int_0^{\infty} e^{-\delta s} p\big(X^h(s), \al^h(s), u\big)m_s^h(du)ds.
\eeq
The value function defined in \eqref{e.5} can be rewritten as
$$V(x, \al)=\sup\{ J(x, \al, m\cd): m\cd \,\,\, \text{ is an admissible relaxed control}\},
$$
where
$$J(x, \al, m\cd):=  \E_{x, \al}  \int_0^{\infty} e^{-\delta s} p\big(X(s), \al(s), u\big) m_s(du)ds.$$

\begin{lem}
The process $\{\al^h\cd\}$ converges weakly to $\alpha\cd$, which is a Markov chain with generator $Q=(q_{kl})$.
\end{lem}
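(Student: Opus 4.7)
\medskip

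\noindent\textbf{Proof proposal.} The plan is to prove weak convergence of $\{\alpha^h(\cdot)\}$ in the Skorokhod space $D([0,\infty);\M)$ by the standard two-step procedure: first establish tightness, then identify any subsequential limit as the unique solution of the martingale problem associated with the generator $Q$. Since the martingale problem for a finite-state generator is well-posed, uniqueness of the limit will upgrade subsequential weak convergence to weak convergence of the full family.

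For tightness, I would exploit the fact that $\M$ is finite together with the local-consistency relations in \eqref{a.1} and the explicit transition probabilities in \eqref{a.2}. The probability that $\alpha^h$ makes a transition in a single step is
\[
\sum_{l\neq k} q^h\bigl((x,k),(x,l)\mid u\bigr) = -q_{kk}\,\Delta t^h(x,k,u) + o\bigl(\Delta t^h(x,k,u)\bigr),
\]
and by construction $\sup_{x,k,u}\Delta t^h(x,k,u)\to 0$ as $h\to 0$. This makes the expected number of jumps of $\alpha^h(\cdot)$ in any bounded time interval uniformly bounded, and a short computation verifies Aldous' criterion for tightness in $D([0,\infty);\M)$.

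To identify the limit, I would verify the martingale problem for $Q$. For an arbitrary $f\colon \M\to\R$ set $(Qf)(k)=\sum_{l}q_{kl}f(l)$ and define
\[
M^h_f(t) \;=\; f(\alpha^h(t))-f(\alpha)-\int_0^t (Qf)(\alpha^h(s))\,ds.
\]
Using \eqref{a.2} and the locally consistent identities \eqref{a.1}, one computes
\[
\E^{h,u}_{x,k,n}\bigl[f(\alpha^h_{n+1})-f(\alpha^h_n)\bigr]
= \sum_{l\neq k} q_{kl}\bigl(f(l)-f(k)\bigr)\Delta t^h(x,k,u) + o(\Delta t^h(x,k,u))
= (Qf)(k)\,\Delta t^h(x,k,u)+o(\Delta t^h(x,k,u)).
\]
Summing over the steps up to $n^h(t)$ and applying the same bookkeeping that produced \eqref{d.3} from \eqref{d.2}, one finds that $M^h_f(\cdot)$ is an $\mathcal{F}^h(\cdot)$-martingale up to an error term whose supremum expectation on any compact interval vanishes as $h\to 0$. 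Passing to a weakly convergent subsequence $\alpha^h\Rightarrow\tilde\alpha$ via the Skorokhod representation theorem, this error-term control together with the continuous mapping theorem (applied to the functional $\alpha\mapsto M_f$, which is continuous at any $\alpha\in D$ having no jump at the relevant times) shows that the limit $M_f(t):=f(\tilde\alpha(t))-f(\tilde\alpha(0))-\int_0^t(Qf)(\tilde\alpha(s))\,ds$ is a martingale for the limit filtration.

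Since $f$ was arbitrary, $\tilde\alpha(\cdot)$ solves the martingale problem for the generator $Q$ with initial condition $\tilde\alpha(0)=\alpha$. On a finite state space this martingale problem has a unique solution, namely the continuous-time Markov chain with transition rates $q_{kl}$. Therefore every subsequential weak limit coincides in distribution with $\alpha(\cdot)$, and hence $\alpha^h(\cdot)\Rightarrow\alpha(\cdot)$. The main technical obstacle I anticipate is the passage to the limit in the asymptotic martingale identity, because the step sizes $\Delta t^h(X^h_n,\alpha^h_n,U^h_n)$ are random and depend on the controlled trajectory $(X^h_n,U^h_n)$; this is handled by observing that the transition probabilities for $\alpha$ in \eqref{a.2} depend on $(x,u)$ only through the scalar $Q_h(x,k,u)$, which cancels exactly against $\Delta t^h$, so the effective jump rate is the deterministic constant $q_{kl}$ regardless of the control, rendering the estimates uniform.
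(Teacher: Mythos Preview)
Your proposal is correct and follows the standard tightness-plus-martingale-problem route for weak convergence of jump processes. The paper does not actually give its own proof here: it simply says the argument is similar to \cite[Theorem~3.1]{YZB03} and omits the details. That reference uses essentially the same scheme you outline---tightness in $D([0,\infty);\M)$ followed by identification of the limit via the martingale problem for $Q$---so there is no substantive difference to report.

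One small comment: your final paragraph correctly isolates the only place where the control could cause trouble, and your observation that $q^h\bigl((x,k),(x,l)\mid u\bigr)=q_{kl}\,\Delta t^h(x,k,u)$ \emph{exactly} (not merely up to $o(\Delta t^h)$) from \eqref{a.2} is the cleanest way to see that the $\alpha$-component decouples from $(X^h,U^h)$ at the level of jump rates. With that in hand the uniformity you need is automatic, and the rest of the argument goes through as written.
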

\begin{proof}
The proof is similar to that of \cite[Theorem 3.1]{YZB03} and is therefore omitted.
\end{proof}

\begin{thm}\label{thm:thm} Suppose Assumption \ref{a:1} holds.
	Let the
	 chain $\{(X^h_n, \al^h_n) \}$ be constructed using the transition probabilities defined in \eqref{a.2},
	$\big(X^h\cd, \al^h\cd, w^h\cd\big)$ be the continuous-time interpolation defined in \eqref{d.1} and \eqref{d.3x}, $\{U^h_n\}$ be an admissible strategy and $m^h\cd$ be the relaxed control representation of $\{U^h_n\}$.
Then the following assertions hold.
\begin{itemize}
\item[\rm(a)] The family of processes
$H^h\cd =\big({X}^h\cd, {\al}^h\cd, m^h\cd, {w}^h\cd\big)$ is tight. 	
	As a result, it has a weakly convergent subsequence with limit $H\cd = \big({X}\cd, {\al}\cd, m\cd, {w}\cd\big).$
		\item[\rm(b)] Let $\mathcal{ F}(t)$ be the $\sigma$-algebra generated by $\big\{H(s): s\le t\big\}$.  Then $w\cd$  is a standard $\mathcal{ F}(t)$ adapted Brownian motion, $m\cd$ is an admissible control,   and
 \beq{dd.3}
		X(t) = x + \int_0^t \[b (X(s), \al(s))+u\]  m_s(du)ds +  \int_0^t \sg(X(s), \al(s)) dw(s), \quad t\ge 0.
		\eeq
	\end{itemize}
\end{thm}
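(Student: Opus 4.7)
The plan is to follow the Markov chain approximation machinery of \cite{Kushner92} adapted to the regime switching setting as in \cite{Song06, YZB03}. The four component processes in $H^h(\cdot)$ play very different roles, so I would handle tightness and limit identification component by component and only then couple them back together.

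\textbf{Step 1 (Tightness).} Tightness of $\alpha^h(\cdot)$ is the content of the preceding lemma. Tightness of $m^h(\cdot)$ is automatic because $\mathcal{R}(\mathcal{U}\times[0,\infty))$ is compact in the Prokhorov topology (see the remark on relaxed controls). For $w^h(\cdot)$, I would note that by construction it is an $\mathcal{F}^h(t)$-martingale whose predictable quadratic variation satisfies $\langle w^h\rangle(t)=t+o(1)$ by the local consistency relations in \eqref{a.1}, and whose jumps are bounded by $h/\inf_{x,k}|\sigma(x,k)|\to 0$; Kurtz's criterion for tightness of jump martingales then applies. For $X^h(\cdot)$, I would use the representation \eqref{dd.1}: the drift term is Lipschitz in $t$ with a bound depending only on $\sup|b|$ on compacts and on $\sup|\mathcal{U}|$, the stochastic integral is a martingale with bounded and equicontinuous quadratic variation, and $\epsilon^h(t)\to 0$ in $L^1$ uniformly on compacts, so Aldous--Kurtz yields tightness.

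\textbf{Step 2 (Extraction and identification of $w$).} Extract a weakly convergent subsequence by Prokhorov and apply the Skorokhod representation theorem to assume a.s.\ convergence along the subsequence. Then $w^h$ passes in the limit to an $\mathcal{F}(t)$-martingale $w$ with continuous paths (because the jumps of $w^h$ vanish uniformly) and with quadratic variation $t$, so by L\'evy's characterization $w$ is a standard $\mathcal{F}(t)$-adapted Brownian motion. Adaptedness of $m$ to $\mathcal{F}(t)$, and thus its admissibility, follows from the corresponding adaptedness of $m^h$ to $\mathcal{F}^h(t)$ by a standard argument (one checks on test functions of the form $\int\Psi(u,s)\,m(du\,ds)$ and uses that functionals of $m^h$ up to time $t$ are functions of the pre-$t$ trajectories).

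\textbf{Step 3 (Passing to the limit in the SDE).} In the representation
\begin{equation*}
X^h(t)=x+\int_0^t\!\!\int_{\mathcal{U}}\bigl[b(X^h(s),\alpha^h(s))-u\bigr]\,m_s^h(du)\,ds+\int_0^t\sigma(X^h(s),\alpha^h(s))\,dw^h(s)+\epsilon^h(t),
\end{equation*}
the drift term converges to $\int_0^t\!\int_{\mathcal{U}}[b(X(s),\alpha(s))-u]\,m_s(du)\,ds$ by joint continuity of $b$, the a.s.\ convergence of $(X^h,\alpha^h)$ off a null set of times (the jump times of $\alpha$ have Lebesgue measure zero), and the weak convergence of the relaxed controls tested against bounded continuous functions. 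For the stochastic integral I would invoke a Kurtz--Protter-type convergence result: the integrand $\sigma(X^h,\alpha^h)$ is uniformly bounded on the compact range where $X^h$ evolves, the martingales $w^h$ have uniformly bounded predictable quadratic variations, and their jumps vanish, so $\int_0^\cdot \sigma(X^h,\alpha^h)\,dw^h\Rightarrow\int_0^\cdot \sigma(X,\alpha)\,dw$. Combined with $\epsilon^h\to 0$, this gives \eqref{dd.3}.

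\textbf{Main obstacle.} The delicate point is the joint limit in the stochastic integral. The integrand $\sigma(X^h(s),\alpha^h(s))$ has genuine jumps coming from $\alpha^h$, and these jumps do \emph{not} disappear in the limit, so one cannot merely invoke uniform convergence of integrands. The way around this is to exploit the martingale orthogonality between $w^h$ and the compensated switching process (which holds in the pre-limit by construction of the transition probabilities in \eqref{a.2}, where the components of $(X^h,\alpha^h)$ move on disjoint transitions) and to verify the Kurtz--Protter hypotheses in the form of uniform $L^2$-control on the martingale part, rather than pathwise uniform convergence. Everything else (martingale identification, adaptedness, continuity of the limit) is then standard and uses Assumption \ref{a:1} only through the local Lipschitz continuity of $b,\sigma$ and the boundedness of $\mathcal{U}$.
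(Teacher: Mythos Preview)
Your proof is correct and follows the same architecture as the paper for tightness and for identifying $w$ (though the paper carries out the latter via the martingale--problem formulation, testing $w^h(t+\rho)-w^h(t)$ and $(w^h(t+\rho))^2-(w^h(t))^2-\rho$ against bounded continuous functionals of $\bigl(X^h(t_k),\alpha^h(t_k),w^h(t_k),(\phi_j,m^h)_{t_k}\bigr)$ for $t_k\le t$; this makes the \emph{full}-filtration martingale property explicit rather than leaving it implicit in your L\'evy argument). The one genuine difference is Step~3: the paper does not use Kurtz--Protter. Instead it introduces a second parameter $\rho>0$, freezes the integrand on intervals $[k\rho,(k+1)\rho)$ so that $\int_0^t \sigma(X^{h,\rho},\alpha^{h,\rho})\,dw^h$ becomes a finite sum of increments of $w^h$, passes $h\to 0$ term by term (now trivial), and then sends $\rho\to 0$ using the tightness of $(X^h,\alpha^h)$ to control the freezing error. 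That route is entirely elementary and self-contained, at the price of one extra limit interchange; yours is shorter once the Kurtz--Protter machinery is granted. Incidentally, the ``main obstacle'' you flag is overstated: Kurtz--Protter is designed for c\`adl\`ag integrands and requires only joint weak convergence of $(\sigma(X^h,\alpha^h),w^h)$ together with the UT condition on $w^h$, both of which you already have---no separate orthogonality argument between $w^h$ and the switching is needed.
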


\begin{proof}

	(a)	We use the tightness criteria in \cite[p. 47]{Kushner84}. Specifically,  a sufficient condition for tightness of a sequence of processes  $\zeta^h\cd$ with paths in $D^k[0, \infty)$ is that for any $T_0, \rho
		\in (0, \infty)$,
		\bea
		\ad  \E_t^h\big|\zeta^h(t+s)-\zeta^h(t)\big|^2\le \E^h_t \gamma(h, \rho) \quad \text{for all}\quad s\in [0, \rho], \quad t\le T_0,\\
		\ad \lim\limits_{\rho\to 0}\limsup\limits_{h\to 0} \E  \gamma(h, \rho) =0.
		\eea
		The tightness of $\{\al^h\cd\}$ is obvious by the preceding lemma. The process $\{m^h\cd\}$ is tight since its range space is compact.
				It is standard to show the tightness of $\{w^h\cd\}$ and $\{X^h\cd\}$ -- see \cite{Song06} for details.
	As a result, a subsequence of $H^h\cd=\big(X^h\cd, \al^h\cd, m^h\cd, w^h\cd\big)$ converges weakly to the limit $H\cd=\big(X\cd, \al\cd, m\cd, w\cd\big)$.

(b) 	For the rest of the proof, we assume the probability space is chosen as required by Skorokhod representation. Thus, with a slight abuse of notation, we assume that $H^h\cd$ converges to the limit 	$H\cd$
		with probability one via Skorokhod representation.
		
		To characterize ${w}\cd$, let $\wdt k, \wdt j$ be arbitrary positive integers. Pick $t>0$, $\rho>0$ and $\{t_k: k\le \wdt k\}$ such that $t_k\le t\le t+\rho$ for each $k$. Let $\phi_j\cd$ be real-valued continuous functions that are compactly supported on $\mathcal{U}\times [0, \infty)$ for any $j\le \wdt j$. Define $(\phi_j, m)_t := \int_0^t \int_{\mathcal{U}} \phi_j (u, s)m(duds)$.
		
		Let $\Psi\cd$ be a real-valued and continuous function of its arguments with compact support. By the definition of $w^h\cd$ in \eqref{d.3x}, $w^h\cd$ is an $\mathcal{F}^h(t)$-martingale. Thus, we have
	\beq{dd.4}
	\E\Psi\big(X^h(t_k), \al^h(t_k), w^h(t_k), (\phi_j, m^h)_{t_k}, j\le \wdt j, k\le \wdt k\big)\big[ {w}^h(t+\rho)-{w}^h(t)\big]=0,
	\eeq
	and
	\beq{dd.5}
	\E\Psi\big(X^h(t_k), \al^h(t_k), w^h(t_k), (\phi_j, m^h)_{t_k}, j\le \wdt j, k\le \wdt k\big)\big[ \big({w}^h(t+\rho)\big)^2-\big({w}^h(t)\big)^2-\rho-{\e}^h(\rho)\big]=0.
	\eeq
	By using the Skorokhod representation and the dominated convergence theorem, letting $h\to 0$ in \eqref{dd.4}, we obtain
	\beq{dd.6}
	\E\Psi\big(X(t_k), \al(t_k), w(t_k), (\phi_j, m)_{t_k}, j\le \wdt j, k\le \wdt k\big)\big[ {w}(t+\rho)-{w}(t)\big]=0.
	\eeq
	Since ${w}\cd$  has continuous paths with probability one,
	\eqref{dd.6} implies that ${w}\cd$ is a continuous ${\mathcal{F}}\cd$-martingale. Moreover, \eqref{dd.5} gives us that
	\beq{}
	\E\Psi\big(X(t_k), \al(t_k), w(t_k), (\phi_j, m)_{t_k}, j\le \wdt j, k\le \wdt k\big)\big[ \big({w}(t+\rho)\big)^2-\big({w}(t)\big)^2-\rho\big]=0.
	\eeq
	Thus, the quadratic variation of $w(t)$ is $t$, which implies that $w\cd$  is a standard $\mathcal{ F}(t)$ adapted Brownian motion.

By the convergence with probability one via Skorokhod representation, we have
	$$\E\left| \int_0^t \int_{\mathcal{U}} \left[b(X^h(s), \al^h(s)) + u\right]m_s^h(du)ds - \int_0^t \int_{\mathcal{U}} \left[b(X(s), \al(s)) + u\right]m_s^h(du)ds \right|\to 0$$
	uniformly in $t$ as $h\to 0$.
	
	 Also, by the weak convergence of $\{m^h\cd\}$, for any bounded and continuous function $\phi\cd$ with compact support, $(\phi, m^h)_\infty\to (\phi, m)_\infty$; see also Remark \ref{rem:relax}. The weak convergence and the Skorokhod representation imply that
	$$\int_0^t \int_{\mathcal{U}} \big[b(X(s), \al(s)) + u\big]m_s^h(du)ds - \int_0^t \int_{\mathcal{U}}\big[b(X(s), \al(s)) + u\big]m_s(du)ds\to 0$$
	uniformly in $t$ on any bounded interval with probability one.

For each positive constant $\rho$ and process ${\nu}\cd$, define the piecewise constant process ${\nu}^\rho\cd$
	by
	${\nu}^\rho(t)={\nu}(k\rho)$ for $t\in [k\rho, k\rho+\rho), k\in  \mathbb{Z}_{\ge 0}$. Then, by the tightness of
	$({X}^h\cd, {\al}^h\cd)$,
	\eqref{dd.1} can be rewritten as
	$$
	{X}^h(t) = x + \int_0^t \int_{\mathcal{U}} \big[b ({X}^h(s), {\al}^h(s)) +u\big]m_s^h(du)ds + \int_0^t \sg({X}^{h, \rho}(s), {\al}^{h, \rho}(s)) d {w}^h(s)  + {\e}^{h, \rho}(t),
	$$
	where
	$\lim\limits_{\rho\to 0}\limsup\limits_{h\to 0} \E|{\e}^{h, \rho}(t)|=0.$
	Noting that the processes
	${X}^{h, \rho}\cd$ and ${\al}^{h, \rho}\cd$
	take constant values
	on the intervals $[n\rho, n\rho+\rho)$, we have
	$$\int_0^t  \sg({X}^{h, \rho}(s), {\al}^{h, \rho}(s))d{w}^h(s)\to \int_0^t  \sg({X}^\rho(s), {\al}^\rho(s))d{w}(s) \quad \text{ as }\quad  h\to 0.
	$$
	The integrals above are well defined with probability one since they can be written as finite sums.
	Combining the last results, we have
	$$
	{X}(t)=x +\int_0^t \int_{\mathcal{U}} \big[ b({X}(s), {\al}(s))+u\big] m_s(du)ds+\int_0^t  \sg({X}^\rho(s), {\al}^\rho(t))d{w}(s)  +{\e}^{\rho}(t),
	$$
	where
	$\lim\limits_{\rho\to 0}E|{\e}^{ \rho}(t)|=0.$ Taking the limit as $\rho \to 0$ finishes the proof.
\end{proof}

\begin{thm}\label{thm:4.6} Suppose Assumption \ref{a:1} holds. Let $V^h(x, \al)$ and $V(x, \al)$ be the value functions defined in \eqref{e.5} and \eqref{e.7}. Then $V^h(x, \al)\to V(x, \al)$ as $h\to 0$.
\end{thm}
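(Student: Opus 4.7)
The plan is to prove $V^h(x,\al) \to V(x,\al)$ by establishing the two-sided bounds $\limsup_{h\to 0} V^h(x,\al) \leq V(x,\al)$ and $\liminf_{h\to 0} V^h(x,\al) \geq V(x,\al)$. Both directions rest on Theorem \ref{thm:thm} together with Assumption \ref{a:1}(b) (boundedness and continuity of $p$) and the exponential discount $e^{-\delta s}$, which tames the infinite-horizon integrals via a uniform tail bound $\int_{T_0}^\infty e^{-\delta s}\|p\|_\infty\, ds \leq \|p\|_\infty e^{-\delta T_0}/\delta$.

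For the upper bound, fix $\varepsilon>0$ and for each $h$ pick a control $U^h \in \mathcal{A}^h_{x,\al}$ with $J^h(x,\al,U^h) \geq V^h(x,\al) - \varepsilon$. Form its relaxed representation $m^h$ and the continuous-time interpolation $H^h = (X^h,\al^h,m^h,w^h)$. By Theorem \ref{thm:thm}, a subsequence converges weakly to a limit $H=(X,\al,m,w)$ where $(X,\al)$ solves the relaxed SDE \eqref{dd.3} under the admissible control $m$; admissibility $X(t)\geq 0$ is inherited from $X^h_n \in S_h$. Passing to Skorokhod representations and exploiting the continuous-bounded form of $p$, for each fixed $T_0$ the truncated functional $\int_0^{T_0} e^{-\delta s} p(X^h,\al^h,u)m^h_s(du)\, ds$ converges almost surely (hence in $L^1$) to $\int_0^{T_0} e^{-\delta s} p(X,\al,u) m_s(du)\, ds$, while the tail is uniformly small. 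Letting $T_0 \to \infty$ yields $J^h(x,\al,U^h) \to J(x,\al,m) \leq V(x,\al)$, hence $\limsup_h V^h(x,\al) \leq V(x,\al) + \varepsilon$, and then $\varepsilon \to 0$ finishes this direction.

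For the lower bound, fix $\varepsilon>0$ and select an admissible relaxed control $m^*$ with $J(x,\al,m^*) \geq V(x,\al) - \varepsilon$. The idea is to approximate $m^*$ by something implementable on the chain. By a chattering-type argument as in \cite[Chapter 10]{Kushner92}, I would first replace $m^*$ by an ordinary feedback control $u^\varepsilon(s,x,\al)$ that is piecewise constant in $s$, finite-valued, and continuous in $(x,\al)$, while still being $\varepsilon$-optimal for the continuous problem on a horizon $[0,T_0]$ with $\|p\|_\infty e^{-\delta T_0}/\delta < \varepsilon$. Setting $U^h_n := u^\varepsilon(t^h_n, X^h_n, \al^h_n)$ produces an admissible chain control; local consistency \eqref{a.1} and Theorem \ref{thm:thm} then imply that the associated interpolation converges weakly to the solution of \eqref{dd.3} driven by $u^\varepsilon$. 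The continuity of $u^\varepsilon$ in $(x,\al)$ is essential so that the drift and running cost pass to the limit correctly. Consequently $J^h(x,\al,U^h) \to J(x,\al,u^\varepsilon) \geq V(x,\al) - 2\varepsilon$, and letting $\varepsilon \to 0$ yields the lower bound.

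The main obstacle, in my view, is the interaction between the state constraint $X(t) \geq 0$ and the identification of weak limits as admissible pairs for the continuous problem, together with the dual task of producing chain controls that never violate $X^h_n \in S_h$. Near $x=0$, where $b(0,\al) = \sg(0,\al) = 0$ and extinction is absorbing, one must verify that the limit process in the upper bound argument does not spend positive measure time outside $\R_+$, and that the feedback $u^\varepsilon$ in the lower bound remains admissible when discretized; these issues are naturally handled by the construction \eqref{a.2} of the transition probabilities, which keeps the chain inside $S_h$, but require careful bookkeeping. Once boundary admissibility is confirmed, the remaining ingredients (chattering approximation, discount-based truncation, and continuous mapping after Skorokhod embedding) are standard within the Markov chain approximation framework.
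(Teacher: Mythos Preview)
Your overall architecture matches the paper: prove the two inequalities $\limsup_h V^h\le V$ and $\liminf_h V^h\ge V$ via relaxed controls, weak convergence of the interpolations (Theorem~\ref{thm:thm}), Skorokhod representation, and the discount-truncation bound on the tail. For the upper bound your argument is essentially identical to the paper's.

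The lower bound, however, diverges from the paper in a way that leaves a gap. You invoke a ``chattering-type argument'' to produce a near-optimal \emph{feedback} control $u^\varepsilon(s,x,\al)$ that is continuous in $(x,\al)$, and then plug it into the chain via $U^h_n=u^\varepsilon(t^h_n,X^h_n,\al^h_n)$. The chattering lemma as stated in \cite{Kushner90,Kushner92} does not yield this: it produces an ordinary \emph{open-loop} (adapted) control that is piecewise constant in time and finite-valued, not a state-feedback map, and certainly not one guaranteed to be continuous in the state. Under Assumption~\ref{a:1} alone there is no a~priori reason a near-optimal continuous feedback exists, and without continuity of $u^\varepsilon$ in $x$ the weak-convergence step for the feedback-controlled chain does not go through as you describe.

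The paper's route avoids this by staying in the open-loop world: after chattering, one passes to an auxiliary problem with controls constant on a $\lambda$-grid, takes an $\varepsilon$-optimal control there, and then approximates it by a measurable function of finitely many samples of $(w,\al)$. Because $w^h\cd$ and $\al^h\cd$ converge weakly to $w\cd$ and $\al\cd$, this noise-dependent control can be transported to the chain, yielding chain controls $\lbar U^h$ whose relaxed representations converge to the desired limit. This noise-approximation step is the substantive ingredient your proposal is missing; the boundary-admissibility issue you flag, while worth checking, is not the real obstacle here.
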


\begin{proof}
The proof is motivated by that of Theorem 7 in \cite{Song06}.
Let $U^h\cd$ be an admissible strategy for the chain $\{(X^h_n, \al^h_n)\}$ and  $m^h\cd$ be the corresponding relaxed control representation. Without loss of generality (passing to an additional subsequence if needed), we assume that  $ \big({X}^{{h}}\cd, {\al}^{{h}}\cd, {w}^{{h}}\cd, {m}^{{h}}\cd\big)$
	converges weakly to
	$\big({X}\cd, {\al}\cd, {w}\cd, {m}\cd\big)$.
	We show that as $h\to 0$ we have
	\beq{dd.7}
	J^h(x, \al, U^h\cd) \to
	J(x, \al,m\cd).
	\eeq
	From \eqref{dd.2} one has
	\beq{dd.8}
	\barray
	J^h (x, \al, U^h\cd) \ad
	=  \E \int_0^{\infty} e^{-\delta s} p\big(X^h(s), \al^h(s), u\big)m_s^h(du)ds.
	\earray
	\eeq
	By the weak convergence and the Skorokhod representation, as $h\to 0$,
	$$J^h (x, \al, U^h\cd) \to
	  \E \int_0^{\infty} e^{-\delta s} p\big(X(s), \al(s), u)\big)m_s(du)ds.$$
	  This yields that $J^h(x, \al, U^h\cd) \to
	J(x, \al,m\cd)$ as $h\to0$.

	Next, we prove that
	\beq{dd.9}
	\limsup\limits_{h\to 0} V^h(x, \al) \le V(x, \al).
	\eeq
	For any small positive constant $\e$, let $\wdt{U}^h\cd$ be an $\e$-optimal harvesting strategy for the chain $\{(X^h_n, \al^h_n)\}$; that is,
	$$V^h(x, \al)=\sup\limits_{U^h\cd} J^h(x, \al, U^h\cd)\le J^h(x, \al, \wdt{U}^h\cd) + \e.$$
	Choose a subsequence $\{\wdt{h}\}$ of $\{h\}$
	such that
	\beq{dd.10}\limsup\limits_{{h}\to 0} V^{{h}}(x, \al)=\lim\limits_{\wdt{h}\to 0}V^{\wdt{h}} (x, \al)\le\limsup\limits_{\wdt{h}\to 0} J^{\wdt{h}}(x, \al, {\wdt{U}}^{\wdt{h}}\cd)+\e.\eeq
Let $\wdt{m}^{\wdt{h}}\cd$ be the relaxed control representation of $\wdt{U}^{\wdt{h}}\cd$.	Without loss of generality (passing to an additional subsequence if needed), we may assume that
	$ \big({X}^{\wdt{h}}\cd, {\al}^{\wdt{h}}\cd, {w}^{\wdt{h}}\cd, {m}^{\wdt{h}}\cd\big)$
	converges weakly to
	$\big({X}\cd, {\al}\cd, {w}\cd, {m}\cd\big)$.
	It follows from our claim in the beginning of the proof that
	\beq{dd.11}
	\lim\limits_{\wdt{h}\to 0} J^{\wdt{h}}(x, \al,  {\wdt{U}}^{\wdt{h}}\cd)=\lim\limits_{\wdt{h}\to 0} J^{\wdt{h}}(x, \al,  {\wdt{m}}^{\wdt{h}}\cd)= J(x, \al,  m\cd )\le V(x, \al),
	\eeq
	where	$J(x, \al, m\cd)\le V(x, \al)$
	by the definition of $V(x, \al)$.
	Since $\e$ is arbitrarily small, \eqref{dd.9} follows from \eqref{dd.10} and \eqref{dd.11}.

	To prove the reverse inequality
	$\liminf\limits_{h} V^h(x, \al)\ge V(x, \al)
	$, for any small positive constant $\e$,
	we choose a particular $\e$-optimal strategy $\lbar m\cd$ for \eqref{e.2}-\eqref{e.3} such that
	the approximation can be applied
	to the chain $\{(X^h_n, \al^h_n)\}$
	and the associated cost compared with $V^h(x, \al)$. By the chattering lemma (see for instance \cite[Theorem 3.1]{Kushner90}), for any given $\e>0$, there is a constant $\lambda>0$ and an ordinary control
	$\lbar U^\e\cd$
	for \eqref{e.2}-\eqref{e.3} with the following properties:
\begin{enumerate}[(a)]
  \item $\lbar U^\e\cd$ takes only finitely many values (denoted by $\mathcal{U}_\e$ the set of all such values);
  \item $\lbar U^\e\cd$ is constant on the intervals $[k\lambda, k\lambda + \lambda)$ for $k\in  \mathbb{Z}_{\ge 0};$
  \item with $\lbar m^\e\cd$ denoting the relaxed control representation of $\lbar U^\e\cd$, we have that \newline$(\lbar X^\e\cd, \lbar \al^\e\cd, \lbar w^\e\cd, \lbar m^\e\cd)$ converges weakly to
	$(\lbar X\cd, \lbar \al\cd, \lbar w\cd, \lbar m\cd)$ as $\e\to 0$;
\item $J(x, \al, \lbar m^\e\cd)\ge V(x, \al) -\e$.
\end{enumerate}
For $\e>0$ and the corresponding $\lambda$ in the chattering lemma, consider an optimal control problem for \eqref{e.2} subject to \eqref{e.3}, but where the controls are constants over the interval $[k\lambda, k\lambda +\lambda)$ for $k\in  \mathbb{Z}_{\ge 0}$ and take values in $\mathcal{U}_\e$ (the set of control values of $\lbar U^\e\cd$). This corresponds to controlling the discrete-time Markov process that is obtained by sampling $X\cd$ and $\al\cd$ at times $k\lambda$ for $k\in  \mathbb{Z}_{\ge 0}$. Let $\wdh{U}^\e\cd$ denote the $\e$-optimal control, $\wdh{m}^\e\cd$ denote the relaxed control representation, and let $\wdh{X}^\e\cd$ denote the associated state process. Since $\wdh{m}^\e\cd$ is $\e$-optimal in the chosen class of controls, we have
$$J(x, \al, \wdh{m}^\e\cd)\ge J(x, \al, \lbar m^\e\cd)-\e\ge V(x, \al)-2\e.$$
We next approximate $\wdh{U}^\e\cd$ by a suitable function of $w\cd$ and $\al\cd$.
Using the same method as in \cite{Song06}, we can approximate $\wdh{U}^\e\cd$ by
the ordinary control $U^{\e, \theta}\cd$ with the corresponding relaxed control $m^{\e, \theta}\cd$ and the state process $X^{\e, \theta}\cd$ such that $$m^{\e, \theta}\cd\to \wdh{m}^\e \cd$$ as $\theta\to 0$ and $$J(x, \al, m^{\e, \theta}\cd)\ge J(x, \al, \wdh{m}^\e\cd) - \e\ge V(x, \al)-3\e.$$ Then a sequence of ordinary controls $\{\lbar U^h_n\}$ for the chain $\{(X^h\cd, \al^h\cd)\}$ can be constructed with the relaxed control representation $\{\lbar m^h_n\}$  such that as $h\to 0$, the $(X^h\cd, \al^h\cd, \lbar m^h\cd, w^h\cd)$ converges weakly to $(X^{\e, \theta}\cd, \al\cd, m^{\e, \theta}\cd, w\cd)$. By the optimality of $V^h(x, \al)$ and the weak convergence above, we have as $h\to 0$,
$$V^h(x, \al) \ge J(x, \al, \lbar m^h\cd)\to J(x, \al, m^{\e, \theta}\cd).$$ It follows that
$V^h(x, \al)\ge V(x, \al)-4\e$ for sufficiently small $h$.
Since any subsequence of $H^h\cd$ has a subsequence that converges weakly and $\e
$ is arbitrary, we have $\liminf\limits_{h}V^h(x, \al)\ge V(x, \al)$. The conclusion follows.
\end{proof}

\section{Numerical Experiments}\label{sec:numerical}
\subsection{Varying the cost dependency}
\

\noindent
We want to see what effect different specifications of the cost function have on the shape of the optimal harvesting rate, and in particular whether it is bang-bang. We suspect that the convexity of the cost function leads to bang-bang (all or nothing) optimal harvesting. In Figure \ref{fig:cost_exp_1}, we have as an example a cost functions of the form $C(u) = \sqrt{|u|})$. The rest of the parameters are kept the same as in Subsection \ref{sec:intro_lv}. This example has concave costs, but there is a point of convexity at 0. Experiments with other partly concave cost functions show a similar pattern. Piecewise linear costs like $C(u)= |u|$, or $C(u) = \ln(1+|u|)$, lead to optimal controls that are step functions. However, when we use a purely concave cost function, like $C(u) = \ln(1+u/3)$, seen in Figure \ref{fig:cost_exp_3}, we again obtain bang-bang optimal control. Further experiments confirm the observation.
\begin{figure}[h!tb]
	\begin{center}
		\subfloat{{
			\includegraphics[scale=0.5]{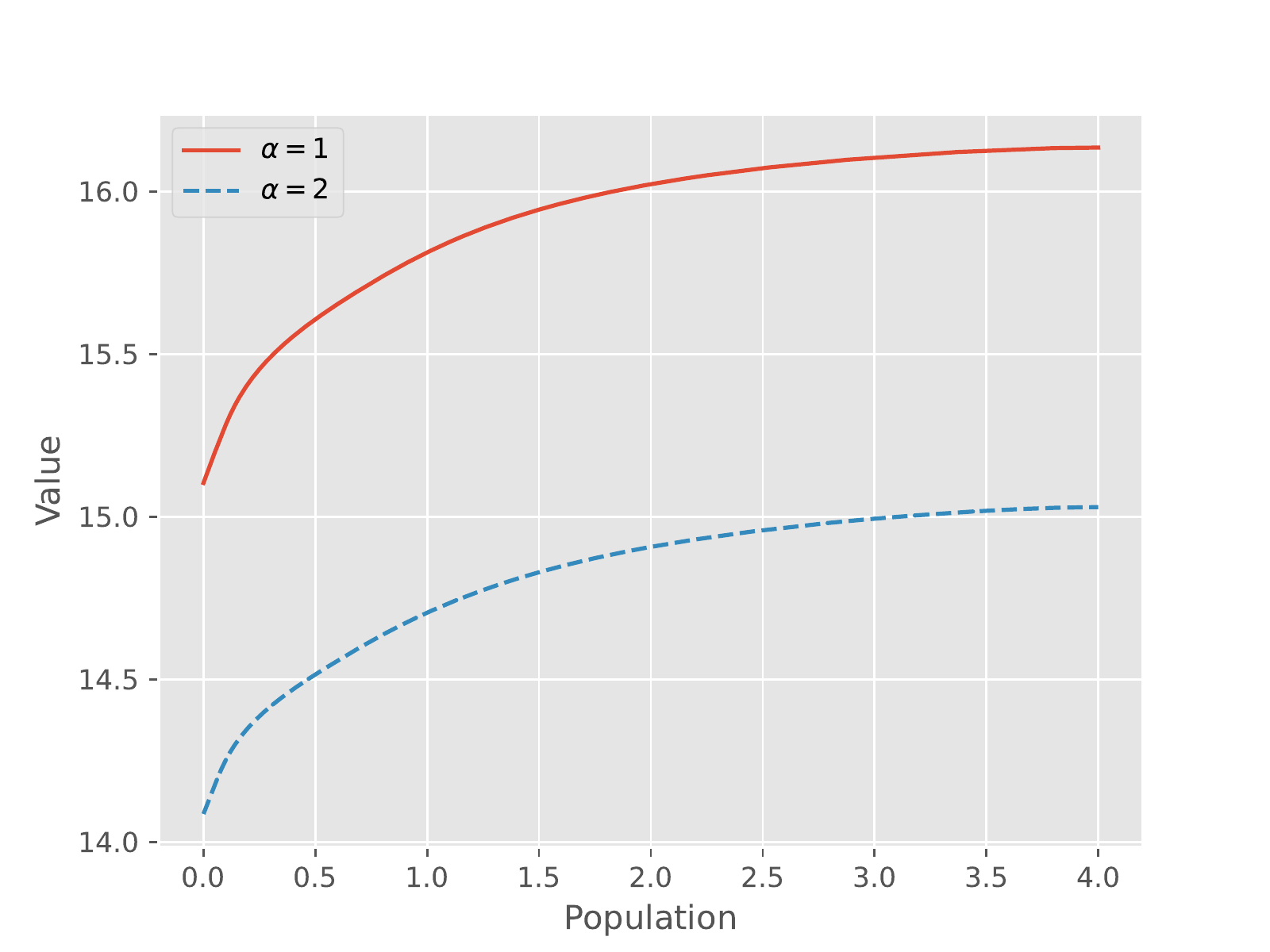}
		}}
		\subfloat{{
			\includegraphics[scale=0.5]{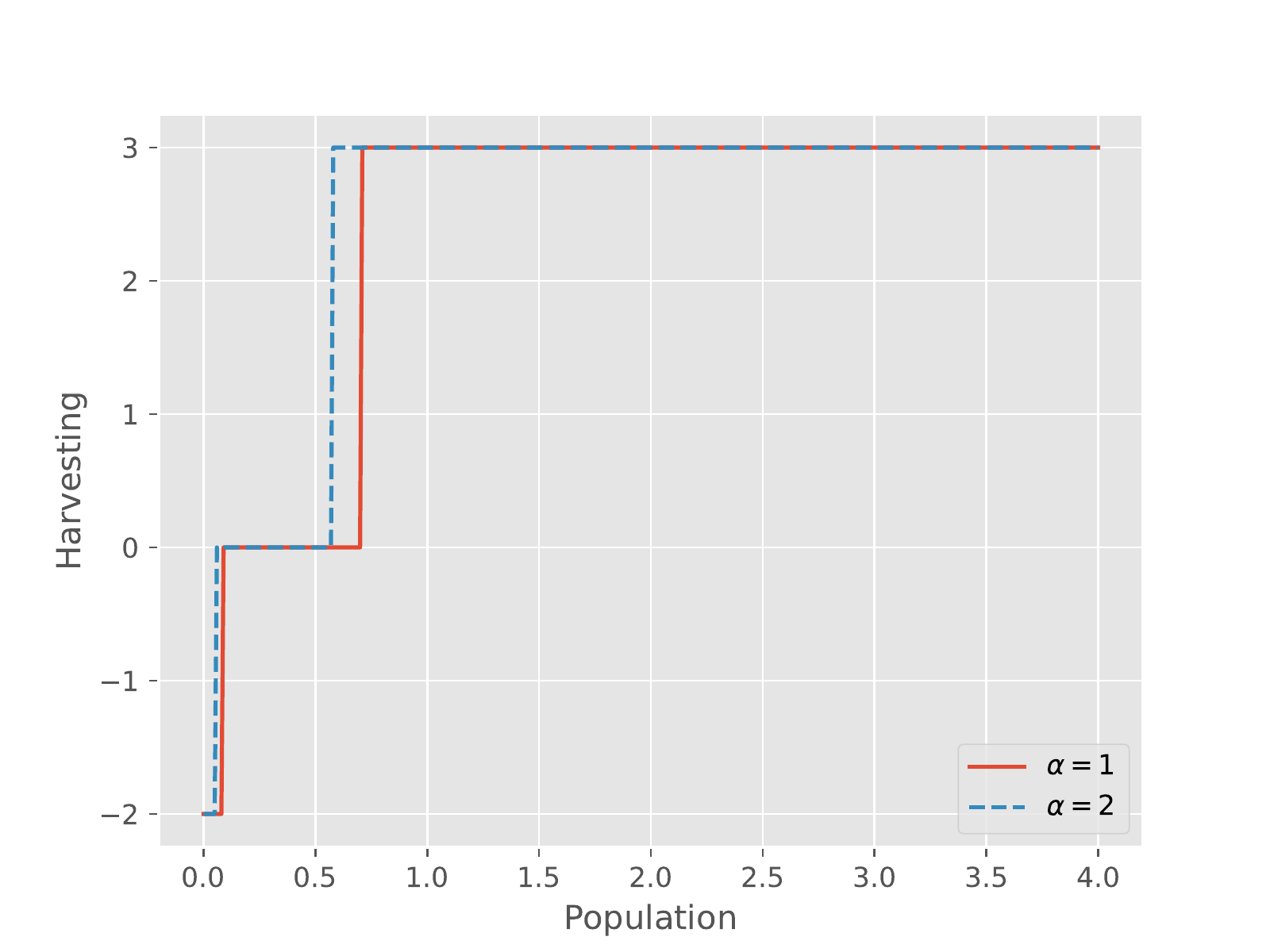}
		}}
		\caption{Value function (left) and optimal harvesting-stocking rate (right) for a model with switching affecting $\mu(\al) = 4 - \al$, and a cost function $C(u) = \sqrt{|u|}$. Other parameters described in Subsection \ref{sec:intro_lv}.} \label{fig:cost_exp_1}
	\end{center}
\end{figure}


\begin{figure}[h!tb]
	\begin{center}
		\subfloat{{
			\includegraphics[scale=0.5]{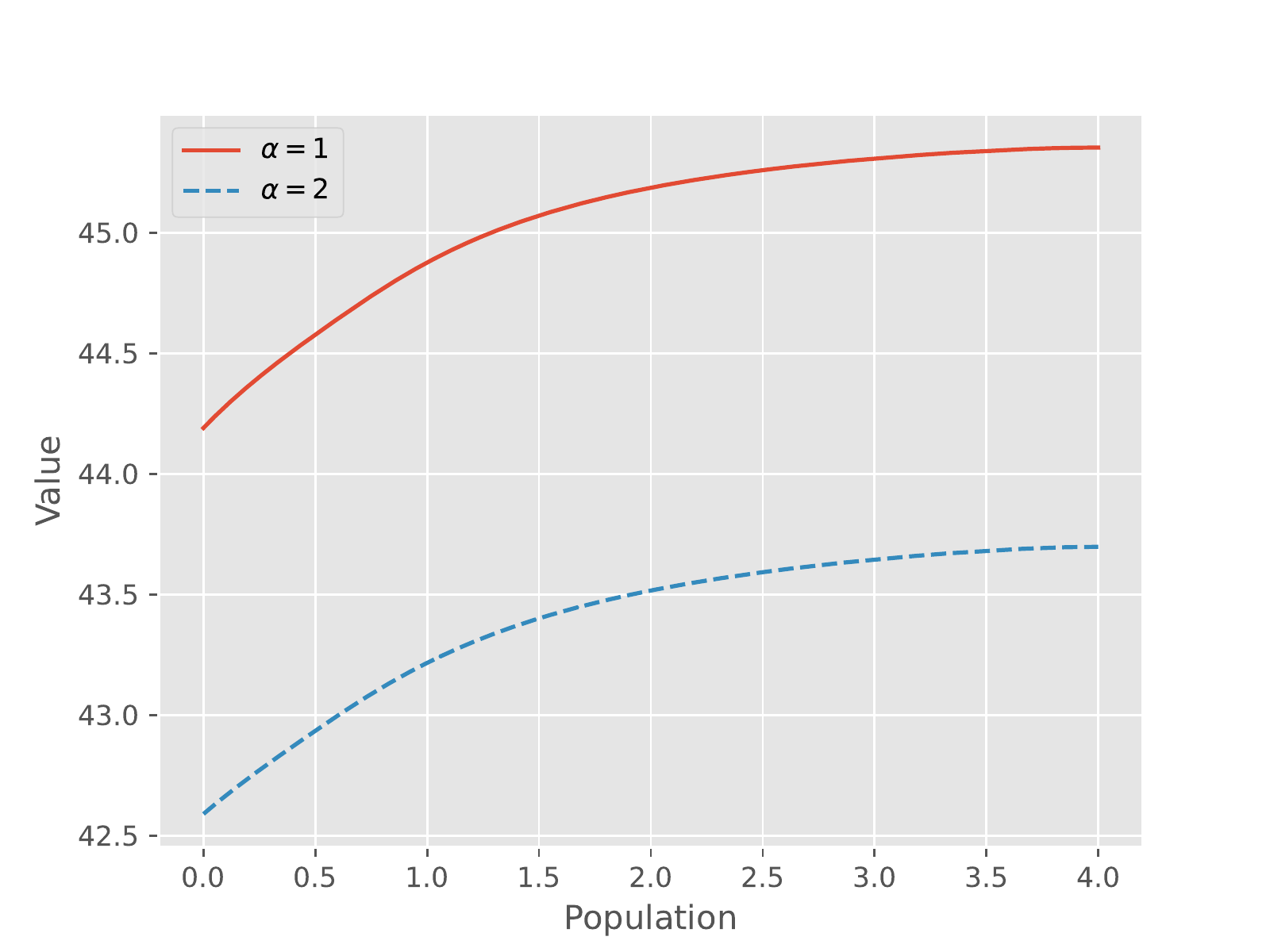}
		}}
		\subfloat{{
			\includegraphics[scale=0.5]{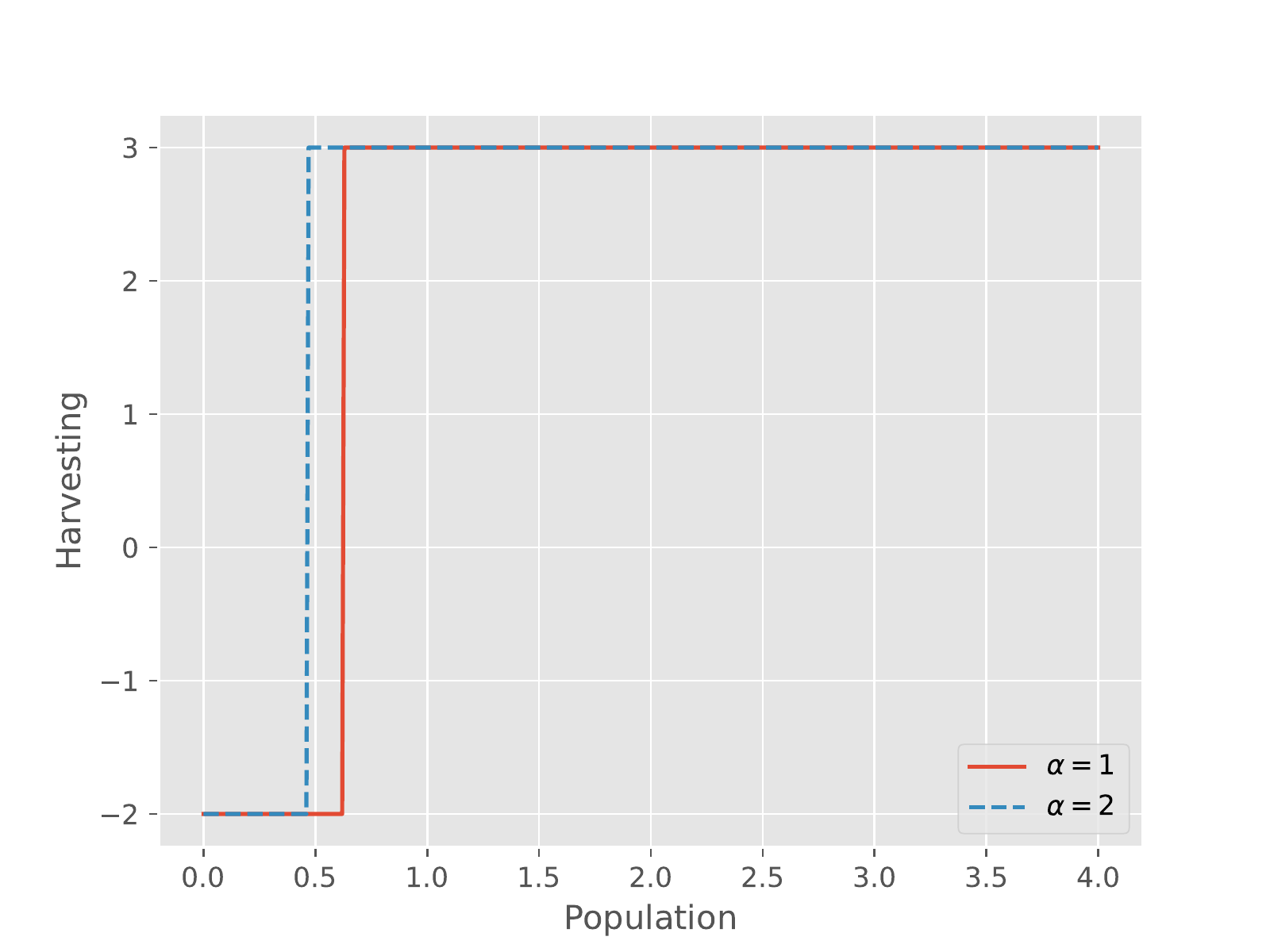}
		}}
		\caption{Value function (left) and optimal harvesting-stocking rate (right) for a model with switching affecting $\mu(\al) = 4 - \al$, and a cost function $C(u) = \ln(1+u/3)$. Other parameters described in Subsection \ref{sec:intro_lv}.} \label{fig:cost_exp_3}
	\end{center}
\end{figure}

\subsection{The Gompertz model of population growth}
\

\noindent
In this example, the dynamics of the population size without harvesting is given by a Gompertz model \citep{W32,Z93} of the form
\begin{equation*}
	dX(t)=\big[b(X(t), \alpha(t)) -U(t)X(t)\big]dt + \sigma(X(t), \alpha(t))dw(t),
\end{equation*}
where
\begin{align*}
	& b(x, \alpha)=(4 - \alpha) x \ln \dfrac{2}{x}, \quad \sigma(x, \alpha) = x, \\
	& \mathcal{U}=\{u :u = k/500, k\in \mathbb{Z}, -1000\le k\le 1500\}, \quad (x, \alpha)\in  \mathbb{R}_+ \times \{1, 2\}.
\end{align*}

The generator $Q$ of the Markov chain $\al\cd$ is given by
\begin{equation*}
	q_{11}=-0.1, \quad q_{12}=0.1, \quad q_{21}=0.1, \quad q_{22}=-0.1.
\end{equation*}

\begin{figure}[h!tb]
	\begin{center}
		\subfloat{{
			\includegraphics[scale=0.5]{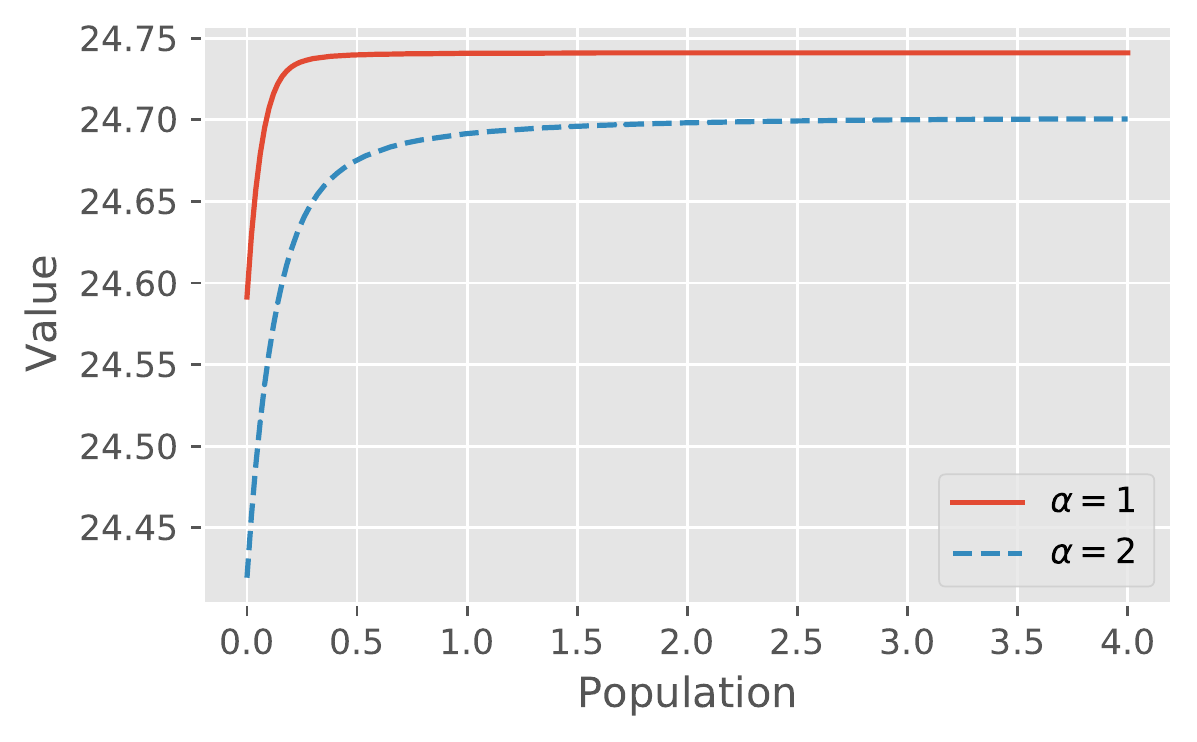}
		}}
		\subfloat{{
			\includegraphics[scale=0.5]{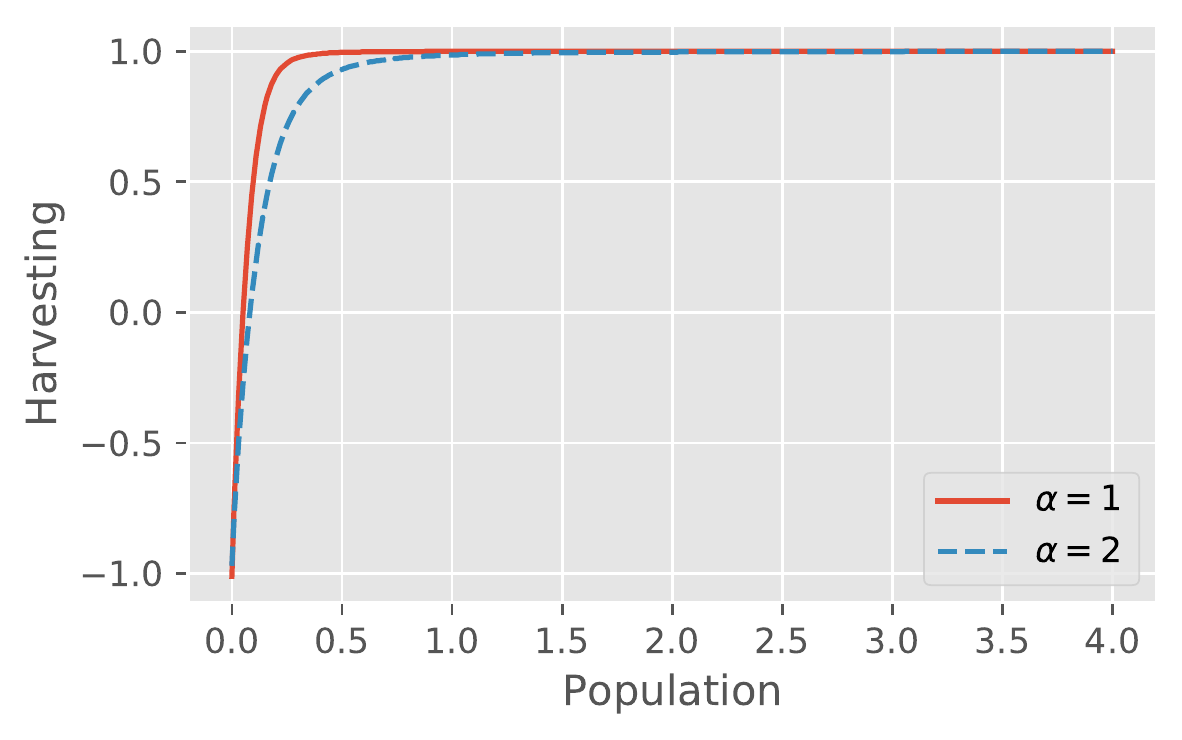}
		}}
		\caption{Value function (left) and optimal harvesting-stocking rate (right) for a Gompertz model with absolute harvesting, with switching affectin $b(x, \alpha)=(4 - \alpha) x \ln \frac{2}{x}$, constant price $P(\cdot) = 1$, and a cost function $C(\cdot) = u^2/2$. Other parameters described in Subsection \ref{sec:intro_lv}.} \label{fig:Gompertz}
	\end{center}
\end{figure}
Figure \ref{fig:Gompertz} shows the value function and the optimal stocking-harvesting rate as a function of population size $X(t)$ and the environmental state $\al$.
In the Gompertz model, the deterministic rate of growth near extinction goes to $\infty$, unlike in the logistic model where it is linear.
Comparing these results with the ones in Figure \ref{fig:switching_cost_1}, we can also see that low population values in the Gompertz model are much less unfavorable, both in terms of future value and in terms of the benefit of extraction.

\subsection{The Nisbet-Gurney model of population growth}
\

\noindent
In this model, the evolution of the population size without harvesting is given by a switched Nisbet-Gurney model; thus,
\begin{equation*}
	dX(t)=\big[b(X(t), \alpha(t)) -U(t)\big]dt + \sigma(X(t), \alpha(t))dw(t),
\end{equation*}
where
\begin{align*}
	& b(x, \alpha) = (4-\alpha) x e^{-x} - x, \quad \sigma(x, \alpha)= x, \\
	& \mathcal{U}=\{u :u = k/500, k\in \mathbb{Z}, -1000\le k\le 1500\}, \quad (x, \al)\in  \mathbb{R}_+ \times \{1, 2\}.
\end{align*}
The generator $Q$ of the Markov chain $\alpha(\cdot)$ is given by
\begin{equation*}
	q_{11}=-0.1, \quad q_{12}=0.1, \quad q_{21}=0.1, \quad q_{22}=-0.1.
\end{equation*}

\begin{figure}[h!tb]
	\begin{center}
		\subfloat{{
			\includegraphics[scale=0.5]{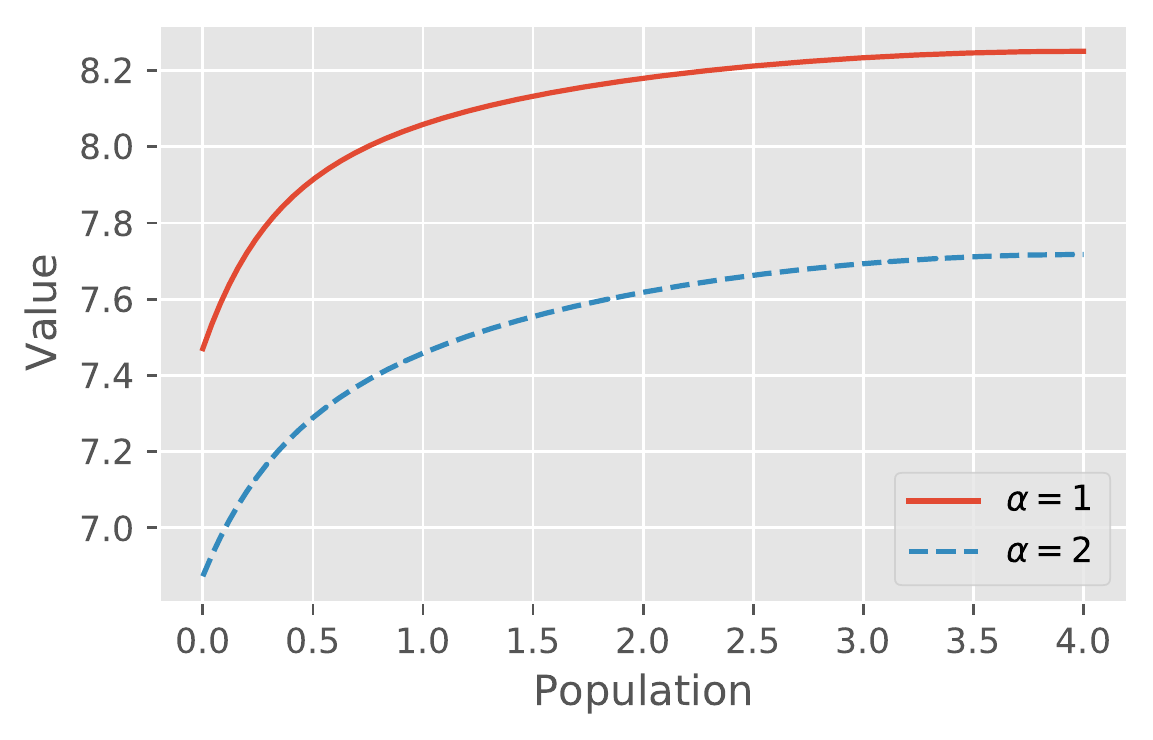}
		}}
		\subfloat{{
			\includegraphics[scale=0.5]{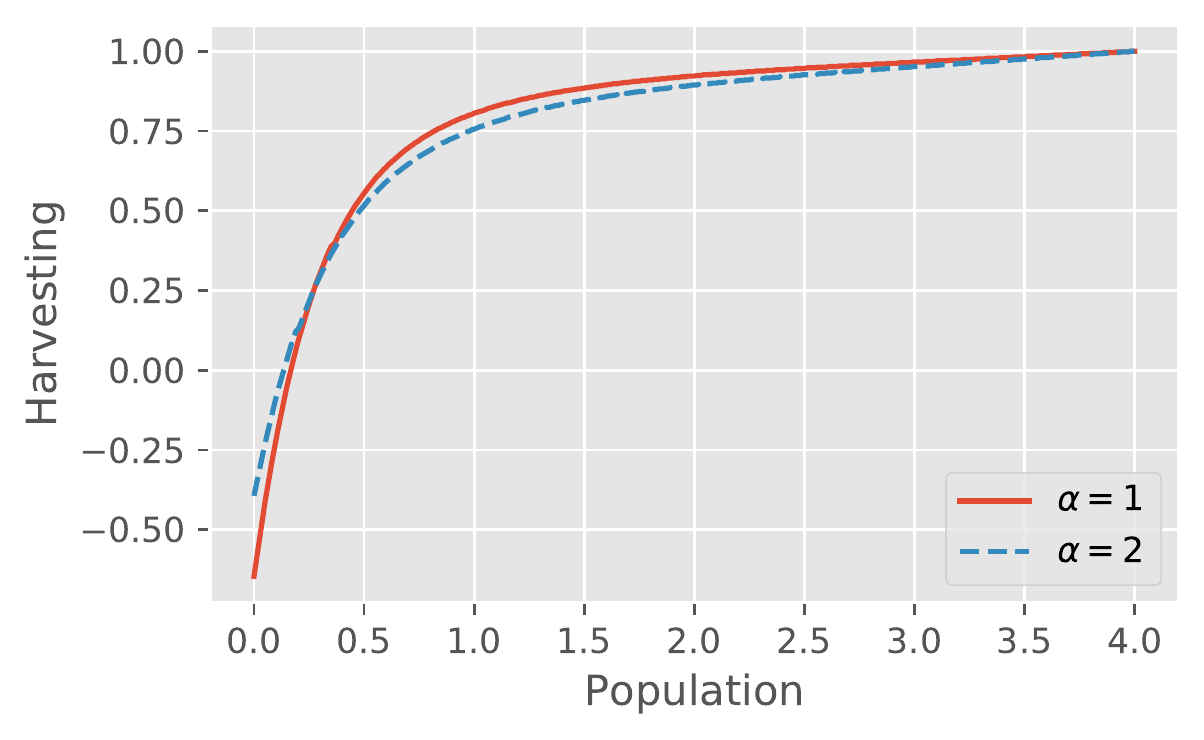}
		}}
		\caption{Value function (left) and optimal harvesting-stocking rate (right) for a Nisbet-Gurney model with absolute harvesting, with switching affecting the growth rate $b(x, \alpha) = (4-\alpha) x e^{-x} - x$, constant price $P(\cdot) = 1$, and a cost function $C(\cdot) = u^2/2$. Other parameters described in Subsection \ref{sec:intro_lv}.} \label{fig:NisGur}
	\end{center}
\end{figure}

Figure \ref{fig:NisGur} shows a numerical estimation of this model. The value function has the usual features, being increasing and concave.
The harvesting rate is monotonic, which is not a surprise considering the cost choice and our discussion in Section \ref{sec:var_eff}.
Again, the control in state $\alpha = 1$ shows higher harvesting and seeding, which is consistent with this state being more favourable for growth.


\end{document}